\DeclareSymbolFont{bbold}{U}{bbold}{m}{n}
\DeclareSymbolFontAlphabet{\mathbbold}{bbold}
\def\qmod#1#2{{\hbox{}^{\displaystyle{#1}}}\!\big/\!\hbox{}_{
\displaystyle{#2}}}
 \def\psp#1#2%
 \def\psb#1#2%
 \def\pscr#1#2#3%
\def\C{{\mathbb C}}
\def\H{{\mathbb H}}
\def\N{{\mathbb N}}
\def\P{{\mathbb P}}
\def\R{{\mathbb R}}
\def\textmap#1{\mathop{\vbox{\ialign{
                                  ##\crcr
      ${\scriptstyle\hfil\;\;#1\;\;\hfil}$\crcr
      \noalign{\kern 1pt\nointerlineskip}
      \rightarrowfill\crcr}}\;}}
\def\bigtextmap#1{\mathop{\vbox{\ialign{
                                  ##\crcr
      ${\hfil\;\;#1\;\;\hfil}$\crcr
      \noalign{\kern 1pt\nointerlineskip}
      \rightarrowfill\crcr}}\;}}
\newcommand{\cal}{\mathcal}
\def\textlmap#1{\mathop{\vbox{\ialign{
                                  ##\crcr
      ${\scriptstyle\hfil\;\;#1\;\;\hfil}$\crcr
      \noalign{\kern-1pt\nointerlineskip}
      \leftarrowfill\crcr}}\;}}
\def\cg{{\mathfrak c}}
\def\dg{{\mathfrak d}}
\def\fg{{\mathfrak f}}
\def\g{{\mathfrak g}}
\def\hg{{\mathfrak h}}
\def\kg{{\mathfrak k}}
\def\lg{{\mathfrak l}}
\def\rg{{\mathfrak r}}
\def\Ng{{\mathfrak N}}
\theoremstyle{remark}
\newtheorem{ex}{Example}[section]
\newtheorem*{cl}{Claim}
\newtheorem{sz}{Satz}[section]
\theoremstyle{remark}
\newtheorem{re}[sz]{Remark} 
\theoremstyle{plain}
\newtheorem{thr}[sz]{Theorem}
\newtheorem{pr}[sz]{Proposition}
\newtheorem{co}[sz]{Corollary}
\newtheorem{dt}[sz]{Definition}
\newtheorem{lm}[sz]{Lemma}
\def\End{\mathrm {End}}
\def\Aut{\mathrm {Aut}}
\def\GL{\mathrm {GL}}
\def\gl{\mathrm {gl}}
\def\Hom{\mathrm{Hom}}
\def\id{ \mathrm{id}}
\def\im{\mathrm{im}}
\def\p{\mathrm{p}}
\newcommand\smvee{{\hskip -0.1ex \raise 0.2ex\hbox{$\scriptscriptstyle\vee$}}\hskip -0,3ex}
\newcommand{\extpw}{\mathchoice{{\textstyle\bigwedge}}%
    {{\bigwedge}}%
    {{\textstyle\wedge}}%
    {{\scriptstyle\wedge}}}
\def\extp{{\extpw}\hspace{-2pt}}
\def\Ad{\mathrm{Ad}}
\def\trp#1{\tensor[^{\mathrm{t}}]{#1}{}}
\def\edf{\coloneq}
\def\hb{\hbox}
\def\fr{\frac}
\def\p{\partial}
\def\bp{\bar\partial}
\begin{document}

\title{The Newlander-Nirenberg Theorem for principal bundles}

\author{Andrei Teleman}
 \thanks{The author would like to thank the anonymous referee for the careful reading of the  article, for his positive comments and useful remarks.}
\address{Aix Marseille Univ, CNRS, I2M, Marseille, France.}
\email[Andrei Teleman]{andrei.teleman@univ-amu.fr}
\begin{abstract}
Let $G$ be an arbitrary (not necessarily isomorphic to a closed subgroup of $\mathrm{GL}(r,\mathbb{C})$) complex Lie group, $U$  a complex manifold and $p:P\to U$  a $\mathcal{C}^\infty$ principal $G$-bundle on $U$.  We introduce and study the space $\mathcal{J}^\kappa_P$ of  bundle almost complex structures of Hölder class $\mathcal{C}^\kappa$ on $P$. To any $J\in \mathcal{J}^\kappa_P$ we associate an $\mathrm{Ad}(P)$-valued form $\mathfrak{f}_J$ of type (0,2) on $U$ which should be interpreted as the obstruction to the integrability of $J$. For $\kappa\geq 1$ we have $\mathfrak{f}_J\in\mathcal{C}^{\kappa-1}(U,\bigwedge\hspace{-3.5pt}^{0,2}_{\,\,U}\otimes\mathrm{Ad}(P))$ whereas, for $\kappa\in[0,1)$, $\mathfrak{f}_J$ is a form with distribution coefficients.

Let $J\in \mathcal{J}^\kappa_P$ with $\kappa\in (0,+\infty]\setminus\mathbb{N}$.  We prove that $J$ admits locally $J$-pseudo-holomorphic sections of class $\mathcal{C}^{\kappa+1}$ if and only if $\mathfrak{f}_J=0$. If this is the case, $J$ defines a holomorphic reduction of the underlying $\mathcal{C}^{\kappa+1}$-bundle of $P$ in the sense of the theory of principal bundles on complex manifolds. The proof is based on classical regularity results for the $\bar\partial$-Neumann operator  on compact, strictly pseudo-convex complex manifolds with boundary.

The result will be used in  forthcoming articles dedicated to  moduli spaces of holomorphic bundles (on a compact complex manifold $X$) framed along a real hypersurface $S\subset  X$.

 \end{abstract}

 \subjclass[2020]{32L05, 32G15, 32T15}

\maketitle

\tableofcontents

{\ }\vspace{-10mm}\\
{\bf Notations:} We will use the notation $\extp^{d}_{\;U}$ ($\extp^{p,q}_{\;U}$) for the vector bundle of forms of degree $d$ (bidegree $(p,q)$) on a (complex) manifold $U$. The space of sections of class ${\cal C}^\infty$  in this bundle, i.e. the space of differrential forms  of degree $d$ (bidegree $(p,q)$) on $U$ will be denoted by $A^d(U)$ ($A^{p,q}(U)$). The notation $\Gamma(W,E)$ ($\Gamma(W,P)$) will stand for the set of ${\cal C}^\infty$  sections  of a differentiable vector  (principal) bundle above an open subset $W$ of its base manifold.

For a vector space (bundle) $E$ on a manifold (with boundary) $U$ ($\bar U$) we will use the notation ${\cal C}^\kappa(U, E)$ (${\cal C}^\kappa(\bar U, E)$) for the space of $E$-valued maps (sections in $E$) of class ${\cal C}^\kappa$ above $U$ ($\bar U$).  
\section{Introduction}

The classical Newlander-Nirenberg  theorem \cite{NN} states that  an almost complex structure $J$ on a differentiable manifold $M$ is integrable (i.e. is induced by a holomorphic structure on $M$) if and only if its Nijenhuis tensor $N_J$ vanishes. Several renowned proofs \cite{NN}, \cite{Ko}, \cite{FK}, \cite{We}, \cite{Ho2}, \cite{Ma}, \cite{HT} based on  different techniques   are available. 

The Newlander-Nirenberg theorem has a well-known version for vector bundles, which is due to Griffiths \cite[Proposition p. 419]{Gr} (see also \cite[Theorem 5.1]{AHS}, \cite[Proposition I.3.7]{Ko}, \cite[Theorem 2.1.53 and section 2.2.2]{DK}) and plays a crucial role in Gauge Theory (see for instance \cite{DK}). This vector bundle version of the Newlander-Nirenberg theorem states that a Dolbeault operator (semi-connection) $\delta:A^0(E)\to A^{0,1}(E)$ on a differentiable complex vector bundle $E$ on a complex manifold $U$ is integrable (i.e. is induced by a holomorphic structure on $E$) if and only if the form $F_\delta\in A^{0,2}(U,\End(E))$ associated with $\delta^2:A^0(E)\to A^{0,2}(E)$ vanishes (see section \ref{DolbeaultSect} in the Appendix).

The starting point of this article is  a natural generalization of this result to  principal $G$-bundles, where $G$ is an {\it arbitrary}  (not necessarily isomorphic to a subgroup of $\GL(r,\C)$) complex Lie group.

In the framework of principal bundles the role of Dolbeault operators is played by bundle almost complex structures, which are introduced and studied in section \ref{ACSsection}. A bundle almost complex structure (bundle ACS) on a principal $G$-bundle $p:P\to U$ is an almost complex structure $J$ on $P$ which makes the $G$-action on $P$ and the map  $p$  pseudo-holomorphic.  The space ${\cal J}_P$ of bundle ACS on $P$ is an affine space with model space $A^{0,1}(U,\Ad(P))$ and comes with a natural  action of the gauge group $\Aut(P)$ of $P$.

The principal bundle version of the Newlander-Nirenberg theorem states that (see section \ref{ACSsection} for details):
\begin{pr} \label{NeNi-smooth}
 A bundle ACS $J\in {\cal J}_P$ on $P$ is integrable if and only if the canonically associated form $\fg_J\in A^{0,2}(U,\Ad(P))$   vanishes.	\end{pr}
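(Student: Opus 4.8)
The plan is to interpret $\fg_J$ as the obstruction to the involutivity of the $J$-antiholomorphic tangent distribution on the total space $P$, and then to deduce the statement from the classical Newlander--Nirenberg theorem applied to $P$ itself. Since $J$ makes $p$ pseudo-holomorphic and $p$ is a submersion, $dp$ maps the $(-i)$-eigenbundle $T^{0,1}_J P\subset T_\C P$ onto $p^\ast T^{0,1}U$, giving a short exact sequence
$$0\longrightarrow V^{0,1}\longrightarrow T^{0,1}_J P\xrightarrow{\ dp\ } p^\ast T^{0,1}U\longrightarrow 0,$$
where $V=\ker dp$ carries its fibrewise complex structure and $V^{0,1}=V_\C\cap T^{0,1}_J P$. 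The first step is to check that $\fg_J$, as defined in Section \ref{ACSsection}, is precisely the tensorial quantity obtained by lifting two local $(0,1)$-vector fields $\bar X,\bar Y$ on $U$ to sections $\widetilde X,\widetilde Y$ of $T^{0,1}_J P$ and taking the $V^{1,0}$-component of $[\widetilde X,\widetilde Y]$; one first notes that $dp[\widetilde X,\widetilde Y]=[\bar X,\bar Y]\in\Gamma(T^{0,1}U)$ forces this component to lie in $V^{1,0}$, then uses the $G$-equivariance of $J$ to see that it is independent of the lifts and of the local frame, and finally the isomorphism $V^{1,0}\cong p^\ast\Ad(P)$ of complex bundles to descend it to the asserted $\Ad(P)$-valued $(0,2)$-form on $U$.

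The second, and main, step is to show that $T^{0,1}_J P$ is involutive if and only if $\fg_J=0$. I would split the bracket of two sections of $T^{0,1}_J P$ according to the sequence above into three types. The bracket $[V^{0,1},V^{0,1}]$ is vertical and fibrewise, and stays in $V^{0,1}$ because each fibre is biholomorphic to the complex Lie group $G$, whose left-invariant antiholomorphic distribution is involutive (the $(0,1)$-part of $\g_\C$ is a subalgebra). The mixed bracket $[V^{0,1},\widetilde X]$ lands in $T^{0,1}_J P$ by the pseudo-holomorphy of the $G$-action, which is exactly the $G$-invariance of $T^{0,1}_J P$. Since the $V^{1,0}$-component of anything in $T^{0,1}_J P$ vanishes, the only possible failure of involutivity is the $V^{1,0}$-component of $[\widetilde X,\widetilde Y]$, namely $\fg_J$. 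Concretely, in a local trivialization $P|_W\cong W\times G$ the structure $J$ is encoded by a form $a\in A^{0,1}(W,\g)$, the distribution $T^{0,1}_J P$ is spanned by the lifts $\partial_{\bar z^j}+\widetilde{a_j}$ together with the left-invariant antiholomorphic vertical fields, and a Maurer--Cartan computation represents $\fg_J$ by $\bp a+\tfrac12[a\wedge a]$, so that involutivity is the identity $\bp a+\tfrac12[a\wedge a]=0$. As $N_J=0$ is equivalent to the involutivity of $T^{0,1}_J P$, this yields $N_J=0\iff\fg_J=0$.

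The final step transfers the conclusion to the bundle setting. If $\fg_J=0$ then $N_J=0$, so by the classical Newlander--Nirenberg theorem \cite{NN} the almost complex manifold $(P,J)$ is a complex manifold; the $G$-action and the projection $p$, being pseudo-holomorphic between complex manifolds, are then holomorphic, and a local holomorphic section of the holomorphic submersion $p$ trivializes $P$ holomorphically via $(u,g)\mapsto s(u)g$, exhibiting $J$ as induced by a holomorphic principal bundle structure, i.e. $J$ is integrable. Conversely, if $J$ is integrable then $T^{0,1}_J P$ is the genuine antiholomorphic tangent bundle of $P$, hence involutive, so $\fg_J=0$. The chief obstacle is the second step: because $G$ is not assumed to be linear one cannot pass to $\End(E)$ and invoke Griffiths' theorem, so the three bracket computations---especially the vanishing of the mixed term and the exact matching of the horizontal component with the previously defined $\fg_J$, signs and conventions included---must be carried out intrinsically in the abstract Lie algebra $\g$, using only the group structure and equivariance.
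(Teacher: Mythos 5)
Your proof is correct and takes essentially the same route as the paper: both identify $\fg_J$ as the $V^{1,0}$-valued obstruction to the involutivity of $T^{0,1}_{P,J}$ and then conclude via the classical Newlander--Nirenberg theorem through the equivalence between $N_J=0$ and the involutivity of $T^{0,1}_{P,J}$, with integrability of $J$ read off from local holomorphic sections of the resulting holomorphic submersion $p$. The only difference is economy: the paper shortcuts your three-case bracket analysis by noting that $T^{0,1}_{P,J}$ sits inside the integrable distribution $p_*^{-1}(T^{0,1}_U)=T^{0,1}_{P,J}\oplus V^{1,0}$, so the whole obstruction is $\alpha_J([A,B])$, the vertical--vertical and mixed brackets having already been handled (via $G$-invariance of $T^{0,1}_{P,J}$) when verifying that $\fg_J$ is tensorial.
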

  The proof is an easy application of a well known remark related to the general Newlander-Nirenberg  theorem: the vanishing of the   Nijenhuis tensor $N_J$ of an ACS on $M$ is equivalent to the integrability of the   distribution $T^{0,1}_J\subset T_M^\C$ associated with $J$.

Proposition \ref{NeNi-smooth} is not sufficient for our purposes. In  \cite{TT}, \cite{Te2}, we  will construct and study moduli spaces of $S$-framed holomorphic bundles on a compact complex manifold $X$, where $S\subset X$ is a fixed real hypersurface. These moduli spaces are infinite dimensional. In order to endow such a moduli space with the structure of a Banach analytic  space in the sense of Douady, we have to work with bundle ACS belonging to a fixed Hölder differentiability class ${\cal C}^\kappa\edf {\cal C}^{[\kappa],\kappa-[\kappa]}$ with $\kappa\in (0,\infty)\setminus\N$ (see section \ref{H-spaces}) and to study the integrability condition in the appropriate sense for such a bundle ACS.

Our main result is a Hölder version of Proposition \ref{NeNi-smooth}: 
 \begin{thr}[Newlander-Nirenberg  theorem for bundle ACS of class ${\cal C}^\kappa$]  \label{NNGCkappa} 
 Let $G$ be a complex Lie group  and $p:P\to U$ a differentiable principal $G$-bundle on $U$. Let $J$ be a bundle ACS of class ${\cal C}^\kappa$ on $P$ with $\kappa\in(0,+\infty]\setminus\N$. The following conditions are equivalent:
 \begin{enumerate}
 	\item $\fg_J=0$.
 	\item   for any point $x\in U$ there exists an open neighborhood $W$ of $x$ and a $J$-pseudo-holomorphic section $\sigma\in {\cal C}^{\kappa+1}(W,P)$.
 \end{enumerate}
 \end{thr}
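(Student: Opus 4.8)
The plan is to treat the two implications separately, the implication (2)$\Rightarrow$(1) being elementary and (1)$\Rightarrow$(2) carrying all the analytic weight.

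For (2)$\Rightarrow$(1) I would argue pointwise. Fix $x\in U$ and let $\sigma\in{\cal C}^{\kappa+1}(W,P)$ be a $J$-pseudo-holomorphic section provided by hypothesis. Using $\sigma$ as a differentiable frame trivializes $P|_W\cong W\times G$, and in this trivialization the bundle ACS $J$ is represented by a $\g$-valued $(0,1)$-form $a_\sigma$ which, by the very definition of pseudo-holomorphicity of $\sigma$, vanishes identically. Since $\fg_J$ is represented in any local trivialization by the Maurer--Cartan curvature $\bar\partial a+\tfrac12[a\wedge a]$ and transforms tensorially (by $\Ad$) under change of frame, $a_\sigma=0$ forces $\fg_J=0$ on $W$, in the distributional sense when $\kappa<1$. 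As $x$ is arbitrary and $\fg_J$ is globally defined, $\fg_J\equiv 0$.

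For (1)$\Rightarrow$(2) I would first reduce to a normalized local model. Choosing holomorphic coordinates on $U$ and a smooth trivialization of $P$ identifies a neighborhood of $x$ with a ball in $\C^n$ and represents $J$ by a form $a\in{\cal C}^\kappa$. After a preliminary ${\cal C}^\infty$ gauge transformation I may assume $a(x)=0$, and after rescaling the coordinates I may make $\|a\|_{{\cal C}^\kappa}$ as small as I wish on a fixed ball $\Omega$; the hypothesis $\fg_J=0$ becomes the Maurer--Cartan equation $\bar\partial a+\tfrac12[a\wedge a]=0$. The sought section $\sigma=(\cdot,g)$ with $g=\exp(u)$, $u$ a $\g$-valued function, is $J$-pseudo-holomorphic exactly when $u$ solves a quasilinear equation of the shape $\bar\partial u={\cal F}(u,a)$, where ${\cal F}(0,a)=-a$, ${\cal F}(u,0)=0$, and ${\cal F}$ depends on $u$ through an entire function of $\ad_u$ (coming from the differential of $\exp$). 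Thus everything reduces to solving this $\bar\partial$-equation for a $G$-valued map, with a gain of one derivative.

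Here I would invoke the $\bar\partial$-Neumann theory on the strictly pseudoconvex domain $\Omega$. Since $H^{0,1}(\Omega)=0$, the canonical solution operator $T=\bar\partial^* N$ satisfies $\bar\partial T\beta=\beta$ for every $\bar\partial$-closed $(0,1)$-form $\beta$, and gains half a Hölder derivative up to the boundary, ${\cal C}^\kappa\to{\cal C}^{\kappa+1/2}$. I would then run an iteration $u_0=0$, $u_{k+1}=T({\cal F}(u_k,a))$ in ${\cal C}^{\kappa+1/2}(\bar\Omega)$; for small $\|a\|_{{\cal C}^\kappa}$ this is a contraction (using that ${\cal C}^\kappa$ is a multiplicative algebra and that ${\cal F}$ depends on $u$ through an entire function of $\ad_u$, hence preserves ${\cal C}^\kappa$), producing a limit $u$. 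The iteration yields an \emph{actual} solution only because the integrability hypothesis governs the $\bar\partial$-closedness of the right-hand sides: a Bianchi-type identity expresses $\bar\partial{\cal F}(u,a)$ in terms of ${\cal F}(u,a)$ itself and of the Maurer--Cartan curvature $\bar\partial a+\tfrac12[a\wedge a]$, the latter vanishing by (1); this keeps the scheme consistent (so that $\bar\partial T=\id$ genuinely applies) and forces $\bar\partial u={\cal F}(u,a)$ exactly. To upgrade from the boundary gain $1/2$ to the full ${\cal C}^{\kappa+1}$ asserted, I would restrict to a smaller ball and use interior elliptic regularity: for a $\bar\partial$-closed right-hand side the equation gives $\Delta u=\bar\partial^*{\cal F}(u,a)$, and interior Schauder estimates promote $u$ to ${\cal C}^{\kappa+1}_{\mathrm{loc}}$, with an obvious bootstrap when $\kappa=\infty$. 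Then $\sigma=(\cdot,\exp u)$ is the desired ${\cal C}^{\kappa+1}$ pseudo-holomorphic section.

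The hard part is precisely the interplay just described: keeping the nonlinear iteration consistent (so that the right-hand sides remain effectively $\bar\partial$-closed and $T$ truly inverts $\bar\partial$) while converging in a Hölder class sharp enough to produce the one-derivative gain. Obtaining the sharp regularity forces one to separate the boundary behaviour, where only the $1/2$-gain of the $\bar\partial$-Neumann operator is available, from the interior behaviour, where ellipticity of $\Delta$ restores the full derivative; and one must check that the algebraic identities encoding $\fg_J=0$ remain valid at the limited regularity ${\cal C}^\kappa$, where $\fg_J$ is only a distribution.
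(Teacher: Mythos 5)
Your overall architecture coincides with the paper's (localize, rescale $\alpha_\varepsilon=h_\varepsilon^*\alpha$ to make the ${\cal C}^\kappa$-norm small, solve $\bar\lg(\exp u)=\alpha$ via the canonical $\bp$-Neumann solution operator $\bp^*N$, upgrade to ${\cal C}^{\kappa+1}$ in the interior by elliptic estimates), but there is a genuine gap at exactly the step you flag as the hard part: the passage from a fixed point of $u\mapsto T{\cal F}(u,a)$, $T=\bp^*N$, to an exact solution of $\bp u={\cal F}(u,a)$. For intermediate iterates, ${\cal F}(u_k,a)=\omega_{u_k}^{-1}\cdot a$ is \emph{not} $\bp$-closed (no Bianchi-type identity makes it closed unless $u_k$ already solves the equation), so "$\bp T=\id$ genuinely applies" fails stepwise; what a fixed point actually gives, using $\Box N=\id$ in the absence of harmonic $(0,1)$-forms, is $\bp u={\cal F}(u,a)-\bp^*\bp N{\cal F}(u,a)$, and you must prove that the defect $\bp^*\bp N{\cal F}(u,a)$ vanishes. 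Your claimed identity "expressing $\bp{\cal F}(u,a)$ in terms of ${\cal F}(u,a)$ and the Maurer--Cartan curvature of $a$" is asserted, not derived, and even granted it you would still have to convert it into a homogeneous small-coefficient equation for the defect plus a uniqueness statement, all at regularity where (for $\kappa<1$) the identities hold only distributionally. The paper closes precisely this hole by a different device: the Kuranishi-type map ${\cal K}(\lambda)=\lambda+\frac{1}{2}\bp^*N[\lambda\wedge\lambda]$, which by Lemma \ref{inclusion2} (via $\Box N=\id$ on $(0,2)$-forms and ${\cal R}(\bp)\perp{\cal R}(\bp^*)$) maps the Maurer--Cartan set $W$ into $\ker\bp$; the holomorphic inverse function theorem is applied to ${\cal K}\circ\bar\dg$ on the space $K$ normed by $\|\bp f\|_{{\cal C}^\kappa}$ (Lemma \ref{KZ01}), and the exact identity $\bar\dg(\Ng(\lambda))=\lambda$ is then deduced from the \emph{injectivity of ${\cal K}$ near $0$}, since both $\lambda$ and $\bar\dg(\Ng(\lambda))$ lie in $W$ and have the same ${\cal K}$-image (Proposition \ref{the-map-Ng}). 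Until you supply an equivalent mechanism, your iteration produces a fixed point but not a pseudo-holomorphic section.

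Two secondary points. First, in (2)$\Rightarrow$(1) your frame $\sigma$ is only of class ${\cal C}^{\kappa+1}$, while the distributional vanishing of $\fg_J$ is defined against \emph{smooth} frames and the $\Ad$-tensoriality of $\fg^\tau_J$ is established for smooth data; for $\kappa<1$ the change-of-frame formula therefore needs justification, which the paper supplies by approximating the comparison map in ${\cal C}^1$ by smooth maps, using the exact identity $\bar\kg(\bar\lg(\sigma_n))=0$, and passing to the distributional limit (Remark \ref{local-J-hol-implies-fJ=0}); your sketch should be repaired the same way. Second, your blanket appeal to the subelliptic $\bp$-Neumann estimates tacitly assumes $n\geq 2$: the paper treats $n=1$ (where hypothesis (1) is vacuous) by a separate elementary argument using the ellipticity of $\bp$ on $\P^1$ and a Hölder extension operator (Proposition \ref{M-N}, Theorem \ref{effective-NeNi-n=1}); likewise $\kappa=+\infty$ is handled there by the smooth Proposition \ref{NeNi-smooth} rather than bootstrap, though your bootstrap would also work. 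These last items are repairable details; the fixed-point consistency step is the real gap.
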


 Note that for $\kappa\in(0,1)$ the condition $\fg_J=0$ is meant in distributional sense, see Definition \ref{vanish-in-distribution-sense} in section \ref{proof-section}. Note also that the regularity class of $J$ in the general Newlander-Nirenberg  theorem for almost complex structures on manifolds of real dimension $\geq 4$ is ${\cal C}^\kappa$ with $\kappa>1$ \cite[Theorem 3.1]{We}, so the bundle  version of this theorem   requires a {\it weaker} regularity assumption.

In the theory of principal bundles on complex manifolds one does not use  or consider (almost) complex structures on the total spaces of the considered bundles. The fundamental objects in this theory are holomorphic  principal bundles in the following sense:
\begin{dt}\label{bundleCS}
 Let $P\stackrel{p}{\to} U$ be a topological principal $G$-bundle on $U$. A (bundle) holomorphic structure on $p$ is a set $\hg$ of continuous local sections $\tau:W_\tau\to P$ of $p$ such that:
 \begin{enumerate}
 	\item $\bigcup_{\tau\in \hg} W_\tau=U$.
 	\item Any two elements $\tau$, $\tau'\in \hg$ are holomorphically compatible, i.e. the comparison map $\psi_{\tau\tau'}: W_\tau\cap W_{\tau'}\to G$ defined by  $\tau'=\tau\psi_{\tau\tau'}$, is holomorphic.
 	\item $\hg$ is maximal (with respect to inclusion) satisfying (1), (2).
 \end{enumerate}
 A holomorphic principal $G$-bundle on $U$ is a pair $(P\stackrel{p}{\to} U,\hg)$ consisting of a topological principal $G$-bundle on $U$ and a (bundle) holomorphic structure on $p$.
 \end{dt}

The first consequence of  Theorem \ref{NNGCkappa} is
 \begin{co}\label{HolStr} Let $\kappa\in(0,+\infty]\setminus\N$.
 A bundle ACS $J$ of class ${\cal C}^\kappa$ with $\fg_J=0$ on $P$ defines a (bundle) holomorphic structure $\hg_J$ on its underlying topological bundle.  For an open set $W\subset U$, a section $\sigma\in {\cal C}^{1}(W,P)$ belongs to $\hg_J$ if and only if it is $J$-pseudo-holomorphic. 	
Moreover,   $\hg_J$ is contained   in the set of local sections of class ${\cal C}^{\kappa+1}$ of $P$, so it provides a holomorphic reduction of the underlying  ${\cal C}^{\kappa+1}$ bundle of $P$.
 
\end{co}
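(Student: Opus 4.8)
The plan is to assemble $\hg_J$ from the local pseudo-holomorphic sections furnished by Theorem \ref{NNGCkappa} and to show that the change-of-section maps between any two such sections are automatically holomorphic. I would write $\mathscr{S}$ for the set of all $J$-pseudo-holomorphic sections of $P$ of class ${\cal C}^{\kappa+1}$. Since $\fg_J=0$, Theorem \ref{NNGCkappa} (implication (1)$\Rightarrow$(2)) guarantees that the domains of the elements of $\mathscr{S}$ cover $U$, so condition (1) of Definition \ref{bundleCS} holds. I then set
\[
\hg_J\edf\{\sigma\in{\cal C}^0(W_\sigma,P)\ :\ \psi_{\tau\sigma}\ \hbox{ is holomorphic on }W_\tau\cap W_\sigma\ \hbox{ for every }\tau\in\mathscr{S}\},
\]
the continuous local sections holomorphically compatible with every member of $\mathscr{S}$. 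Granting the Key Lemma below, $\mathscr{S}\subseteq\hg_J$, so $\hg_J$ still covers $U$; compatibility of two arbitrary $\sigma,\sigma'\in\hg_J$ follows from the cocycle identity $\psi_{\sigma\sigma'}=\psi_{\tau\sigma}^{-1}\psi_{\tau\sigma'}$, valid near any point lying in a domain $W_\tau$ with $\tau\in\mathscr{S}$, together with the holomorphy of multiplication and inversion in the complex Lie group $G$; and $\hg_J$ is maximal, since any continuous section compatible with all of $\hg_J$ is in particular compatible with all of $\mathscr{S}$, hence lies in $\hg_J$. Thus $\hg_J$ satisfies (1)--(3) of Definition \ref{bundleCS}.

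The crux, which I expect to be the main obstacle, is the following local statement. \emph{Key Lemma:} if $\tau,\sigma\in{\cal C}^1(W,P)$ with $\tau$ $J$-pseudo-holomorphic and $\sigma=\tau\psi$ for a ${\cal C}^1$ map $\psi:W\to G$, then $\sigma$ is $J$-pseudo-holomorphic if and only if $\psi$ is holomorphic. To prove it I would differentiate $\sigma=R_{\psi(x)}(\tau(x))$ along $\bar v\in T^{0,1}_xU$. Writing $X_\eta$ for the fundamental vector field on $P$ generated by $\eta\in\g$ and $\theta_G$ for the left Maurer--Cartan form of $G$, the product rule yields
\[
d_x\sigma(\bar v)=(R_{\psi(x)})_*\,d_x\tau(\bar v)+\big(X_{\theta_G(d_x\psi(\bar v))}\big)_{\sigma(x)}.
\]
Since $\tau$ is pseudo-holomorphic the first term lies in $T^{0,1}_{J,\tau(x)}$, and since $R_{\psi(x)}$ preserves $J$ it stays in $T^{0,1}_{J,\sigma(x)}$. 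Because the $G$-action is pseudo-holomorphic, the infinitesimal action $\eta\mapsto(X_\eta)_{\sigma(x)}$ is $\C$-linear and injective, so it carries $\g^{1,0}$ into $T^{1,0}_J$ and $\g^{0,1}$ into $T^{0,1}_J$, two subspaces meeting only in $0$. Decomposing $\theta_G(d_x\psi(\bar v))=\alpha+\beta$ with $\alpha\in\g^{1,0}$, $\beta\in\g^{0,1}$, the displayed vector lies in $T^{0,1}_{J,\sigma(x)}$ precisely when $(X_\alpha)_{\sigma(x)}=0$, i.e. (by injectivity) when $\alpha=0$. Hence $d_x\sigma(\bar v)\in T^{0,1}_J$ for every $\bar v\in T^{0,1}_xU$ if and only if $\theta_G(d_x\psi(\bar v))\in\g^{0,1}$ for all such $\bar v$, which, since left translations in $G$ are holomorphic (so $\theta_G$ is $\C$-linear), is exactly the holomorphy of $\psi$.

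With the Key Lemma the remaining assertions are immediate. For the ${\cal C}^1$-characterization: if $\sigma\in{\cal C}^1(W,P)$ is $J$-pseudo-holomorphic then for every $\tau\in\mathscr{S}$ the comparison map $\psi_{\tau\sigma}$ is holomorphic by the Lemma, so $\sigma\in\hg_J$; conversely, if $\sigma\in\hg_J$ is of class ${\cal C}^1$ then near each point $\sigma=\tau\psi_{\tau\sigma}$ with $\tau\in\mathscr{S}$ pseudo-holomorphic and $\psi_{\tau\sigma}$ holomorphic, so the Lemma gives that $\sigma$ is $J$-pseudo-holomorphic. Finally, to see that every $\sigma\in\hg_J$ is of class ${\cal C}^{\kappa+1}$ I would note that, near any point, $\sigma=\tau\psi_{\tau\sigma}$ is the image under the ${\cal C}^\infty$ action map $P\times G\to P$ of the pair $(\tau,\psi_{\tau\sigma})$, where $\tau\in{\cal C}^{\kappa+1}$ and $\psi_{\tau\sigma}$ is holomorphic, hence real-analytic; therefore $\sigma\in{\cal C}^{\kappa+1}$. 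Consequently each $\sigma\in\hg_J$ defines a trivialization $(x,g)\mapsto\sigma(x)g$ of class ${\cal C}^{\kappa+1}$, and since the transition maps between these trivializations are the holomorphic comparison maps $\psi_{\sigma\sigma'}$, the atlas $\hg_J$ is a holomorphic cocycle reducing the underlying ${\cal C}^{\kappa+1}$ principal bundle of $P$, as claimed.
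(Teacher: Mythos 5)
Your proof is correct, but it takes a different route from the paper's at the crucial compatibility step. The paper proves condition (2) of Definition \ref{bundleCS} globally: for two pseudo-holomorphic sections $\tau,\tau'\in{\cal C}^{\kappa+1}$ it forms the associated trivializations $\Psi(x,g)=\tau(x)g$, $\Psi'(x,g)=\tau'(x)g$, observes that these are ${\cal C}^{\kappa+1}$ diffeomorphisms intertwining the product complex structure on $(W\cap W')\times G$ with $J$, so that $\Psi^{-1}\circ\Psi'$ is a holomorphic automorphism of $(W\cap W')\times G$, and reads off holomorphy of $\psi_{\tau\tau'}$ by evaluating at $(x,e)$. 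You instead prove an infinitesimal Key Lemma by differentiating $\sigma=\tau\psi$ and splitting the vertical term via $\g^\C=\g^{1,0}\oplus\g^{0,1}$; this is, in substance, the paper's transformation formula (\ref{alpha-J-tau f}) of Remark \ref{alpha-J-tau f-rem} specialized to $\alpha_J^\tau=0$ (cf.\ Remarks \ref{bijection-J-alpha_J} (4) and \ref{bijection-for-J-hol-sections}), re-derived at ${\cal C}^1$ regularity — the computation indeed only needs ${\cal C}^1$, so this is legitimate. Your other structural difference is to define $\hg_J$ as the compatibility saturation of the set $\mathscr{S}$ of ${\cal C}^{\kappa+1}$ pseudo-holomorphic sections, which makes maximality true by construction but obliges you to verify $\mathscr{S}\subseteq\hg_J$ and that saturation adds nothing new; the paper defines $\hg_J=\mathscr{S}$ directly and disposes of maximality in one sentence (``obviously''). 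A genuine merit of your version is that the Key Lemma makes explicit the regularity bootstrap hidden in that sentence: a ${\cal C}^1$ (indeed continuous, once compatibility is assumed) section compatible with some $\tau\in\mathscr{S}$ satisfies $\sigma=\tau\psi_{\tau\sigma}$ with $\psi_{\tau\sigma}$ holomorphic, hence real-analytic, hence $\sigma\in{\cal C}^{\kappa+1}$ via the smooth action map — exactly the argument needed for the ${\cal C}^1$-characterization clause of the corollary, which the paper leaves implicit. In exchange, the paper's diffeomorphism argument is shorter and avoids any pointwise decomposition, since the unknown (and possibly non-integrable) structure $J$ cancels out of $\Psi^{-1}\circ\Psi'$.
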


\begin{re}
Using the local trivializations associated with  sections $\tau\in \hg_J$, one obtains in particular a complex manifold structure  on $P$ which is compatible with $J$. Note that for $\kappa\in (0,1)$ one {\it cannot} obtain this structure using directly the general Newlander-Nirenberg theorem, because, as mentioned above,  in this theorem the required regularity class of $J$  is ${\cal C}^\kappa$ with $\kappa>1$.
\end{re}

\begin{co}
 \label{kappa-regularity}
 Let $U$ be a complex manifold, $G$ a complex Lie group, and $P$ a principal bundle of class ${\cal C}^\infty$ on $U$. Let $\kappa\in(0,+\infty]\setminus\N$, let $J$ be an   bundle	 ACS of class ${\cal C}^\kappa$  on $P$ with $\fg_J=0$, $G\times F\to F$  a holomorphic action of $G$ on a complex manifold $F$. The sheaf of   holomorphic (with respect to the holomorphic structure induced by $\hg_J$) local sections of the associated bundle $P\times_GF$  is contained in the sheaf of its local sections of class ${\cal C}^{\kappa+1}$.
\end{co}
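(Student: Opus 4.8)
The plan is to reduce the statement to the comparison between a holomorphic frame of $P$ coming from $\hg_J$ and an arbitrary smooth frame, and then to observe that the loss of regularity is governed entirely by the $\mathcal{C}^{\kappa+1}$-regularity of the transition map. Since the assertion is an inclusion of sheaves, it is local, so it suffices to verify it on a basis of small open sets, each contained in the domain of some section $\tau\in\hg_J$ and of some $\mathcal{C}^\infty$-section of $P$.

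First I would make explicit the complex manifold structure used to speak of holomorphic sections of $E\edf P\times_G F$. By Corollary \ref{HolStr} the structure $\hg_J$ turns $P$ into a holomorphic principal $G$-bundle whose comparison maps $\psi_{\tau\tau'}$ are holomorphic. Writing $\alpha:G\times F\to F$ for the given holomorphic action, the induced transition maps $x\mapsto\bigl(y\mapsto\alpha(\psi_{\tau\tau'}(x),y)\bigr)$ take values in the biholomorphisms of $F$ and depend holomorphically on $x$; hence $E$ carries a natural structure of holomorphic fibre bundle over $U$ with fibre $F$, and over each $W_\tau$ the map $[\tau(x),y]\mapsto(x,y)$ is a holomorphic trivialization. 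In particular a local section $s$ of $E$ over $W\subset W_\tau$ is holomorphic if and only if the map $f:W\to F$ determined by $s(x)=[\tau(x),f(x)]$ is holomorphic.

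Next, fix $x_0\in U$, choose $\tau\in\hg_J$ defined near $x_0$ and a $\mathcal{C}^\infty$-section $\sigma$ of $P$ defined near $x_0$, and shrink to a common domain $W$. On $W$ there is a uniquely determined map $g:W\to G$ with $\tau=\sigma\cdot g$; since the principal action is free and transitive on the fibres, $\sigma$ is smooth, and $\tau$ is of class $\mathcal{C}^{\kappa+1}$ by Corollary \ref{HolStr}, the map $g$ is of class $\mathcal{C}^{\kappa+1}$. Given a holomorphic section $s=[\tau,f]$ over $W$ with $f$ holomorphic, the identity $[\tau(x),f(x)]=[\sigma(x)g(x),f(x)]=[\sigma(x),\alpha(g(x),f(x))]$ shows that in the \emph{smooth} trivialization of $E$ associated with $\sigma$ the section $s$ is represented by $\tilde f\edf\alpha\circ(g,f):W\to F$. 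It remains to read off the regularity of $\tilde f$: the map $x\mapsto(g(x),f(x))$ is of class $\mathcal{C}^{\kappa+1}$ (as $g\in\mathcal{C}^{\kappa+1}$ and $f\in\mathcal{C}^\infty$), and $\alpha$ is holomorphic, hence $\mathcal{C}^\infty$. Because $\kappa+1\in(1,+\infty]\setminus\N$, composition on the left with a $\mathcal{C}^\infty$-map preserves the Hölder class $\mathcal{C}^{\kappa+1}$, so $\tilde f\in\mathcal{C}^{\kappa+1}(W,F)$. By the very definition of the differentiable structure on $E$ through the $\mathcal{C}^\infty$-bundle $P$ and the smooth action, this means precisely that $s$ is a local section of $E$ of class $\mathcal{C}^{\kappa+1}$; as $x_0$ is arbitrary, the claimed sheaf inclusion follows.

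The only delicate point is the last one, namely the stability of the Hölder class $\mathcal{C}^{\kappa+1}$ under the composition $\tilde f=\alpha\circ(g,f)$, which one settles by the standard composition estimates in Hölder spaces (valid here because $\kappa+1>1$ and $\alpha$ is smooth, so no regularity is lost). No genuine analysis beyond these elementary lemmas is needed: the substantive work has already been carried out in Theorem \ref{NNGCkappa} and Corollary \ref{HolStr}, and this corollary is a formal transport of that regularity through the associated-bundle construction.
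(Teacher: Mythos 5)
Your proof is correct and is essentially the paper's own argument: both express the holomorphic section in a smooth trivialization by twisting its (holomorphic, hence smooth) representative in a frame $\tau\in\hg_J$ by the comparison map between $\tau$ and a smooth section, which is of class ${\cal C}^{\kappa+1}$ by Corollary \ref{HolStr}, and conclude via stability of the Hölder class under composition with the smooth action map. The paper phrases this through the $G$-equivariant map $\hat\varphi:P_V\to F$ rather than through representing functions, but the computation $(\hat\varphi\circ\sigma)=\psi_{\tau\sigma}^{-1}\cdot(\hat\varphi\circ\tau)$ is identical to your $\tilde f=\alpha\circ(g,f)$.
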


Let $\theta \in A^1(G,\g)$ be the canonical left invariant $\g$-valued form on $G$ \cite[p. 41]{KN}. For a map $U\stackrel{\rm open}\supset V\textmap{\sigma}  G$ of class ${\cal C}^1$ put $\bar\lg(\sigma)\edf \sigma^*(\theta^{1,0})^{0,1}$. From an analytical point of view the meaning of Theorem \ref{NNGCkappa} is the following: for a $\g$-valued form $\alpha\in {\cal C}^\kappa (W,\extp^{0,1}_{\;W}\otimes \g)$ (with $\kappa\in(0,+\infty]\setminus\N$ and $W \subset U$ open), the non-linear first order differential equation
$$
\bar\lg(\sigma)=\alpha
$$
is locally solvable on $W$ if and only if $\bp\alpha+\frac{1}{2}[\alpha\wedge\alpha]=0$ in distributional sense.
\\

In the case $\dim(U)\geq 2$ the proof of Theorem \ref{NNGCkappa} is based on the following effective  version  of the Newlander-Nirenberg theorem for the trivial $G$-bundle on a strictly pseudo-convex  manifold: 
\begin{thr}\label{effective-NeNi}
Let  $X$ be a Hermitian manifold of dimension $n\geq 2$, and let $U\subset X$ be a   relatively compact strictly pseudoconvex open subset   with smooth boundary $\partial \bar U=\bar U\setminus U$. Suppose $H^q(U,{\cal O}_U)=0$ for $q\in\{1,2\}$ and let $\kappa\in(0,+\infty)\setminus\N$. There exists an open neighborhood $N_U$ of $0$ in the closed subset
$$W\edf \left\{\lambda\in {\cal C}^{\kappa}(\bar U,\extp^{0,1}_{\;\bar U}\otimes\g)|\   \bp \lambda+\frac{1}{2}[\lambda\wedge\lambda]=0\right\}\subset {\cal C}^{\kappa}(\bar U,\extp^{0,1}_{\;\bar U}\otimes\g),
$$
and, for any $\lambda\in N_U$,   a solution $u=\Ng(\lambda)\in {\cal C}^{\kappa+1}(U,\g)$ of the equation 
$$
\bar\lg(\exp(u))=\lambda
$$
which   satisfies estimates of the form
\begin{equation}
\|u|_V\|_{{\cal C}^{\kappa+1}}\leq C_V \|\lambda\|_{{\cal C}^\kappa}	
\end{equation}
for relatively compact subdomains $V\Subset U$. \end{thr}

The proof is based on classical regularity results for the $\bp$-Neumann operator \cite{FK}, \cite{LM}, \cite{BGS} and  elliptic interior estimates \cite{DN}.
\vspace{1mm}

In the case $\dim(U)= 1$ we have the following  effective  version  of the Newlander-Nirenberg theorem for the trivial $G$-bundle on a relatively compact open subset $U\subset\C$ with smooth boundary:

\begin{thr} \label{effective-NeNi-n=1}
Let $U\subset\C$ be a bounded domain   with smooth boundary and let $\kappa\in(0,+\infty)\setminus\N$. There exists an open neighborhood $N_U$	of 0 in ${\cal C}^\kappa(\bar U,\extp^{0,1}_{\;\bar U}\otimes\g)$ and, for any $\lambda\in N_U$,  a solution $u=\Ng_U(\lambda)\in   {\cal C}^{\kappa+1}(\bar U,\g)$ of the equation
$$
\bar\lg(\exp(u))=\lambda
$$
such that the obtained map $\Ng_U:N_U\to  {\cal C}^{\kappa+1}(\bar U,\g)$ is holomorphic and satisfies  $\Ng_U(0)=0$.
\end{thr}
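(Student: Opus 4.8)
The plan is to reduce the nonlinear first-order equation $\bar\lg(\exp u)=\lambda$ to an inversion of $\bp$ and then to apply the holomorphic inverse function theorem in complex Banach spaces. The essential simplification in complex dimension one is that $\extp^{0,2}_{\;U}=0$, so $\bp\lambda+\frac12[\lambda\wedge\lambda]$ vanishes identically and the integrability constraint that cuts out the closed subset $W$ in Theorem \ref{effective-NeNi} disappears; this is why $N_U$ may be taken to be an open neighborhood of $0$ in the \emph{full} space ${\cal C}^\kappa(\bar U,\extp^{0,1}_{\;\bar U}\otimes\g)$. First I would make the left-hand side explicit. Writing $\Phi(u)\edf\bar\lg(\exp u)$ and using the classical formula for the pullback of the holomorphic Maurer--Cartan form $\theta=\theta^{1,0}$ of $G$ under $\exp$, together with the fact that $\frac{\id-e^{-\ad_u}}{\ad_u}$ acts pointwise on $\g$ and hence commutes with the projection onto the $(0,1)$-part, one obtains for $u\in{\cal C}^{\kappa+1}(\bar U,\g)$
$$\Phi(u)=\frac{\id-e^{-\ad_u}}{\ad_u}\,\bp u=\sum_{k\geq 0}\frac{(-1)^k}{(k+1)!}(\ad_u)^k\,\bp u.$$
Since ${\cal C}^\kappa(\bar U,\g)$ is a Banach algebra for $\kappa\in(0,\infty)\setminus\N$ and the Lie bracket of $\g$ is continuous bilinear, each summand is a continuous homogeneous polynomial of degree $k+1$ in $u$ and the series converges locally uniformly; thus $\Phi$ is a holomorphic map of complex Banach spaces with $\Phi(0)=0$ and differential $d\Phi_0=\bp$.

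The analytic heart of the argument is the existence of a bounded linear right inverse of $\bp$ that gains one H\"older derivative. On a bounded domain $U\subset\C$ with smooth boundary the $\g$-valued Cauchy--Green operator
$$(T\lambda)(z)\edf-\frac1\pi\int_U\frac{f(\zeta)}{\zeta-z}\,dA(\zeta),\qquad \lambda=f\,d\bar z,$$
applied componentwise in $\g$, satisfies $\bp(T\lambda)=\lambda$ and, by the classical H\"older theory of the Cauchy transform (Vekua), restricts to a bounded operator $T:{\cal C}^\kappa(\bar U,\extp^{0,1}_{\;\bar U}\otimes\g)\to{\cal C}^{\kappa+1}(\bar U,\g)$ for every non-integer $\kappa>0$.

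With $T$ at hand I would make the substitution $u=Tw$. Because $\bp(Tw)=w$, the equation $\Phi(u)=\lambda$ becomes $F(w)=\lambda$, where
$$F(w)\edf\frac{\id-e^{-\ad_{Tw}}}{\ad_{Tw}}\,w$$
is a holomorphic self-map of ${\cal C}^\kappa(\bar U,\extp^{0,1}_{\;\bar U}\otimes\g)$ (composition of the bounded operator $T$ with the holomorphic polynomial map of the first paragraph) satisfying $F(0)=0$ and $dF_0=\id$, the latter coming from the $k=0$ term. The holomorphic inverse function theorem in Banach spaces then furnishes an open neighborhood $N_U$ of $0$ and a holomorphic inverse $G=F^{-1}:N_U\to{\cal C}^\kappa(\bar U,\extp^{0,1}_{\;\bar U}\otimes\g)$ with $G(0)=0$. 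Setting $\Ng_U\edf T\circ G$ yields a holomorphic map with $\Ng_U(0)=0$ and $\bar\lg(\exp(\Ng_U(\lambda)))=\Phi(TG(\lambda))=F(G(\lambda))=\lambda$, as required.

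The main obstacle is the analytic input of the second paragraph: establishing that $T$ gains exactly one H\"older derivative \emph{up to the boundary} for all non-integer $\kappa\in(0,\infty)$. For $\kappa\in(0,1)$ this is the classical statement that the Cauchy--Green integral maps ${\cal C}^{0,\kappa}(\bar U)$ into ${\cal C}^{1,\kappa}(\bar U)$; for larger $\kappa$ it requires the higher-order Schauder-type estimates for the Cauchy transform on a smoothly bounded domain. Everything else---the convergence and holomorphy of $\Phi$ and $F$, and the application of the inverse function theorem---is formal once the Banach-algebra property of the H\"older spaces and the boundedness of $T$ are in place.
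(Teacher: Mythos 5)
Your proof is correct, but it takes a genuinely different route from the paper's. The paper never works on $\bar U$ directly: it first proves the statement on the closed Riemann surface $\P^1$ (Proposition \ref{M-N}), where $\bp:{\cal C}^{\kappa+1}(\P^1,\g)\to{\cal C}^\kappa(\P^1,\extp^{0,1}_{\;\P^1}\otimes\g)$ is surjective with kernel the constants (Hölder elliptic regularity on a closed manifold plus $H^1(\P^1,{\cal O}_{\P^1})=0$); it restricts $\bar\dg=\bar\lg\circ\exp$ to a closed complement $K$ of $\g$, inverts it by the holomorphic inverse function theorem, and then deduces Theorem \ref{effective-NeNi-n=1} by pulling back along a continuous extension operator ${\cal E}^\kappa_{\bar U}:{\cal C}^\kappa(\bar U,\extp^{0,1}_{\;\bar U}\otimes\g)\to{\cal C}^\kappa(\P^1,\extp^{0,1}_{\;\P^1}\otimes\g)$ given by \cite[Lemma 6.37]{GiTr} and restricting the solution to $\bar U$; holomorphy of $\bar\dg$ is obtained abstractly from Palais' composition and differentiability theorems (Lemma \ref{holomorphy-lemma}) rather than from your explicit Maurer--Cartan series. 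You instead stay on $\bar U$, exhibit an explicit bounded right inverse $T$ of $\bp$ (the Cauchy--Green transform) gaining one Hölder derivative up to the boundary, and apply the inverse function theorem after the substitution $u=Tw$, which normalizes $dF_0=\id$. Both arguments are sound. Yours is more explicit and self-contained on $\bar U$ (and your series shows $\Phi$ is entire, with the identity $\bar\lg(\exp u)=\frac{\id-e^{-\ad_u}}{\ad_u}\,\bp u$ valid precisely because $\g$ is a complex Lie algebra, so $\ad_u$ is $\C$-linear and commutes with the type decomposition --- worth stating explicitly); the price is that you must invoke the boundary Schauder estimates $T:{\cal C}^{k,\nu}(\bar U)\to{\cal C}^{k+1,\nu}(\bar U)$ for all $k\geq 0$ on a smoothly bounded domain, which are classical (Vekua for $k=0$; for $k\geq 1$, integration by parts reduces to $T(\partial f)$ plus a boundary Cauchy integral handled by Plemelj--Privalov) but are the one input you cite rather than prove --- comparable in weight to the paper's citations of the extension lemma and elliptic regularity. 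A further small advantage of your route is that it makes the absence of the integrability constraint in dimension one transparent ($\extp^{0,2}_{\;U}=0$), so $N_U$ is visibly open in the full space, which in the paper's proof is inherited silently from Proposition \ref{M-N}.
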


The proof uses the ellipticity of the operator $\bp$ on the closed manifold $\P^1$ and a well known extension lemma for Hölder spaces.

\section{Bundle almost complex structures on principal bundles}\label{ACSsection}

\subsection{The Newlander-Nirenberg theorem for bundles in the smooth case}

Let $G$ be a complex Lie group and $\g$ its Lie algebra. Let $J_G\in\Gamma(G,\End_\R(T_G))$ be the almost complex structure on $G$ defining its complex structure and $J_\g\in\End_\R(\g)$ the endomorphism defining the complex structure of $\g$.   We obtain as usually direct sum decompositions
$$
T_G^\C=T_G^{1,0}\oplus T_G^{0,1},\ \g^\C=\g^{1,0}\oplus\g^{0,1}
$$
of the complexified tangent bundle, respectively Lie algebra of $G$.

Let $\theta\in A^1(G,\g)$ be the canonical left invariant form of $G$ \cite[p. 41]{KN}, and $\theta^{1,0}$ the composition
$$
T_G\otimes_\R\C\textmap{\theta\otimes_\R\id_\C} \g^\C\to \g^{1,0}.
$$
Since $\theta$ is holomorphic, $\theta\otimes_\R\id_\C$ preserves the type, so $\theta^{1,0}$ is a $\g^{1,0}$-valued form of type $(1,0)$; it can obviously be identified with $\theta$ via the standard isomorphisms $(T_G,J_G)\to  T_G^{1,0}$, $(\g,J_\g)\to \g^{1,0}$.\\

Let $p:P\to U$ be a differentiable principal $G$-bundle on $U$. Denote by $V\subset T_P$ the vertical distribution of $P$, and recall that this vector bundle comes with a   canonical trivialization $\vartheta:V\to P\times\g$ given by $(y,a)\mapsto a^\#_y$, which extends to a trivialization $\vartheta^\C:V^\C\to P\times\g^\C$ of the complexified vertical bundle. The complex structure $J_\g$ of $\g$ induces via $\vartheta$ a complex structure on the bundle $V$, so  a direct sum decomposition   $V^\C=V^{1,0}\oplus V^{0,1}$   which corresponds via $\vartheta^\C$ to the decomposition  $\g^\C=\g^{1,0}\oplus\g^{0,1}$.
The subbundle $p_*^{-1}(T^{0,1}_U)$ of $T_P^\C$ fits  in the short exact sequence
\begin{equation}\label{ShExSeqp*}
0 \to V^\C=V^{1,0}\oplus V^{0,1}\hookrightarrow p_*^{-1}(T^{0,1}_U)\textmap{p_*}p^*(T^{0,1}_U)\to 0.	
\end{equation}

\begin{dt}\label{BdACS}
A bundle almost complex structure  (ACS) on $P$ is an almost complex structure  $J$ on $P$ which makes the $G$-action $P\times G\to P$  and the map $p:P\to U$ pseudo-holomorphic. 
 \end{dt}
 Let $J$ be a bundle ACS on $P$. The subbundle $T^{0,1}_{P,J}\subset T_P^\C$ of type (0,1) tangent vectors with respect to $J$ is a $G$-invariant subbundle  of   $p_*^{-1}(T^{0,1}_U)$ which contains $V^{0,1}$ and is a complement of $V^{1,0}$ in $p_*^{-1}(T^{0,1}_U)$. Therefore one can write 
$$T^{0,1}_{P,J}=\ker(\alpha_J)$$
 for a well defined  section   $\alpha_J\in\Gamma(P,p_*^{-1}(T^{0,1}_U)^* \otimes\g^{1,0})$  with the following properties:
  \begin{enumerate}[(Pa)]
 	\item $\alpha_J$ is invariant with respect to the $G$ action $g\mapsto \trp{R}_{g*}\otimes \Ad_{g}$ on   $p_*^{-1}(T^{0,1}_U)^* \otimes\g$. 
 	\item  $\alpha_J$ agrees with the  $\g^{1,0}$-valued form  
 $$
 V^\C\textmap{\vartheta^\C} P\times (\g^{1,0}\oplus\g^{0,1})\to \g^{1,0}
 $$	
 	on $V^\C$. In other words  $\alpha_J$ vanishes on $V^{0,1}$ and induces the canonical isomorphism $V^{1,0}_y\textmap{\vartheta^\C_y} \g^{1,0}$ for any $y\in P$.
 \end{enumerate}        
 The subbundle $p_*^{-1}(T^{0,1}_U)\subset T^\C_P$ splits as a direct sum
 $$p_*^{-1}(T^{0,1}_U)=T^{0,1}_{P,J}\oplus V^{1,0},$$
  and the projection on the first summand is 
\begin{equation}\label{ProjOnT{0,1}}
 \beta_J:p_*^{-1}(T^{0,1}_U) \to T^{0,1}_{P,J}\,, \ \beta_J(v)= v-\alpha_J(v)^\#_y\  \hb{ for }	y\in P,\ v\in p_*^{-1}(T^{0,1}_U)_y.
\end{equation}
\begin{re}\label{bijection-J-alpha_J} Let $p:P\to U$ be a principal $G$-bundle on $U$.
\begin{enumerate}
	\item The assignment $J\mapsto \alpha_J$ gives a bijection between the set ${\cal J}_P$ of bundle ACS on $P$ and the set ${\cal A}_P$ of sections   
$\alpha\in\Gamma(P,p_*^{-1}(T^{0,1}_U)^* \otimes\g^{1,0})$
satisfying properties (Pa), (Pb).
	\item If $P=U\times G$ is the trivial bundle over $U$, the short exact sequence (\ref{ShExSeqp*}) comes with an obvious splitting, and ${\cal A}_P$ can be identified with $A^{0,1}(U,\g^{1,0})$. The product bundle ACS $J_0$ on $U\times G$ corresponds to $\alpha_{J_0}=0$.
	\item Let $J\in {\cal J}_P$. A local section $\tau\in \Gamma(W,P)$ of $P$ defines a trivialization $P_W\stackrel{\simeq}{\to} W\times\C$, so, by   (2), $\alpha_J$ gives a form $\alpha_J^\tau\in A^{0,1}(W,\g^{1,0})$. Explicitly, in terms of $\tau$, we have for any $v\in T_W$:
\begin{equation}
\alpha_J^\tau(v)\edf (\alpha_J\circ \tau_*)(v^{0,1})	.
\end{equation}
\item Let $J\in {\cal J}_P$. A local section $\tau\in \Gamma(W,P)$ of $P$ is $J$-pseudo-holomorphic if and only if 	$\alpha_J^\tau=0$.

\end{enumerate}

\end{re}

The map $\tau\mapsto \alpha_J^\tau$ satisfies the following transformation formula: 
\begin{re}\label{alpha-J-tau f-rem}
Let $f\in {\cal C}^\infty(W,G)$.  Then
\begin{equation}\label{alpha-J-tau f}
\alpha_J^{\tau f}=	\Ad_{f^{-1}}(\alpha_J^\tau)+f^*(\theta^{1,0})^{0,1}.
\end{equation}
\end{re}
\begin{proof}
Put $\tau'\edf \tau f$. For $y\in P$ denote by $l^y:G\to P$ the map $g\mapsto yg$.
For any $v\in T_{U,x}^\C$ we have
\begin{equation*}
\begin{split}
\tau'_*(v)=&R_{f(x)*}(\tau_*(v))+l^{\tau(x)}_*(f_*(v))=R_{f(x)*}(\tau_*(v))+l^{\tau(x)f(x)}_*(l_{f(x)*}^{-1}(f_*(v)))\\
=&
R_{f(x)*}(\tau_*(v))+(\theta(f_*(v)))^{\#}_{\tau(x)f(x)}.
\end{split}
\end{equation*}
Using properties (Pa), (Pb) we obtain for any $v\in T^{0,1}_{U,x}$
$$
\alpha^{\tau'}_J(v)=\Ad_{f(x)^{-1}}(\alpha^\tau_J(v))+\theta^{1,0}(f_*(v)) =\Ad_{f(x)^{-1}}(\alpha^\tau_J(v))+f^*(\theta^{1,0})(v), 
$$
 which proves the claim.
 \end{proof}
 
 \begin{re}\label{bijection-for-J-hol-sections}
 Let $J\in {\cal J}_P$ and  $\tau\in \Gamma(W,P)$. The map 
 $\sigma\mapsto \tau\sigma^{-1}
 $
 induces a bijection between the set of solutions of the equation    \begin{equation}\label{J-pseudo-hol-eq}
 \alpha_J^\tau=\sigma^*(\theta^{1,0})^{0,1}
 \end{equation}
for $\sigma\in {\cal C}^\infty(W,G)$ and the  set of $J$-pseudo-holomorphic sections of $P$ on $W$. 
 \end{re}
 
 \begin{proof}
  Indeed, by Remark \ref{bijection-J-alpha_J} (4) we know that $\tau_\sigma\edf\tau\sigma^{-1}$  is $J$ pseudo-holomorphic if and only if $\alpha_J^{\tau_\sigma}=0$. Writing $\tau=\tau_\sigma \sigma$, formula (\ref{alpha-J-tau f}) shows that 	the equation $\alpha_J^{\tau_\sigma}=0$ is equivalent to (\ref{J-pseudo-hol-eq}).
 \end{proof}

 Let $J\in {\cal J}_P$, put $\alpha\edf \alpha_J$ and consider the anti-symmetric ${\cal C}^\infty(P,\C)$-bilinear map
$$
\Gamma(P,T^{0,1}_{P,J})^2\ni (A,B)\mapsto \alpha([A,B]). 
$$
Since the subbundle $T^{0,1}_{P,J}$ is $G$-invariant, it follows that $[a^\#,\cdot]$ leaves the space $\Gamma(P,T^{01}_{P,J})$ invariant for any $a\in \g^\C$, in particular $\alpha([A,B])=0$ if $A$ or $B$ is vertical. It follows that the formula
$$
\Gamma(P,T^{\C}_{P})^2\ni (A,B)\stackrel{\fg_J}{\longmapsto} -\alpha([A^{0,1}_J,B^{0,1}_J])
$$
defines a $\g^{1,0}$-valued tensorial  (0,2)-form of type $\Ad$ on $P$ (see \cite[section II.5]{KN}), i.e. an element of the space $A^{0,2}_\Ad(P,\g^{1,0})$. Identifying $\g^{1,0}$ with $\g$ in the canonical way, we may regard $\fg_J$ as a  $\g$-valued tensorial  form of type $(0,2)$ on $P$, i.e. as an element  of $A^{0,2}_\Ad(P,\g)=A^{0,2}(U,\Ad(P))$.  We will denote by the same symbol the corresponding element  of $A^{0,2}(U,\Ad(P))$.\\

With these notations we can prove the Newlander-Nirenberg theorem for principal bundles in the smooth case:
 
 \begin{proof} (of Proposition \ref{NeNi-smooth})
  The distribution $p_*^{-1}(T^{0,1}_X)\subset T^\C_P$ is obviously integrable (because it is the pull-back of $T^{0,1}_U$, and $U$ is a complex manifold) and contains $T^{0,1}_{P,J}$. 
For  vector fields $A$, $B\in \Gamma(P,T^{0,1}_{P,J})$ the Poisson bracket $[A,B]$ will still belong to $\Gamma(P,p_*^{-1}(T^{0,1}_U))$, but not necessarily to $\Gamma(P,T^{0,1}_{P,J})$; it belongs to this subspace if and only if $\alpha([A,B])=0$. Therefore the obstruction to the integrability of $J$ is the tensorial form $\fg_J$ as claimed.
	
 \end{proof}

Let $\tau\in\Gamma(W,P)$ be a smooth local section. Although  the pull back $\tau^*$ on forms is not necessarily type preserving, we have 
\begin{equation}\label{fg-J-in-A02}
\tau^*(\fg_J)\in A^{0,2}(W,\g^{1,0}).	
\end{equation}
Indeed, for a tangent vector $v\in T^{1,0}_{U,x}$, we have 
$$\tau_*(v)\in p_{*\tau(x)}^{-1}(T^{1,0}_{U,x})=T^{1,0}_{P,\tau(x)}\oplus V^{0,1}_{\tau(x)}.$$
Since $\fg_J(A,B)=0$ if $A$ or $B$ is of type $(1,0)$ or vertical, it follows that $\tau^*(\fg_J)(v,w)=0$ if $v$ or $w$ is of type (1,0). Taking into account this remark, we define, for a local section $\tau\in\Gamma(W,P)$: 
 \begin{equation}
 \fg^\tau_J\edf \tau^*(\fg_J)\in A^{0,2}(W,\g^{1,0})\simeq 	A^{0,2}(W,\g).
 \end{equation}

 \begin{pr}\label{fg-tau-J-prop}
 Let $\tau\in\Gamma(W,P)$ be a local section of $P$.	 Then
 \begin{equation}\label{fg-tau-J}
 \fg^\tau_J=\bp\alpha_J^\tau+\frac{1}{2}[\alpha_J^\tau\wedge\alpha_J^\tau].
 \end{equation}
 \end{pr}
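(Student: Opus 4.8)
The plan is to verify \eqref{fg-tau-J} pointwise on a local frame. Both sides are sections of $\extp^{0,2}_{\;W}\otimes\g$, and, as noted just before the statement, $\fg^\tau_J$ vanishes as soon as one of its arguments is of type $(1,0)$; hence it suffices to evaluate both sides on the pairs $(e_i,e_j)$ of a local frame of $T^{0,1}_U$. I would choose holomorphic coordinates $(z^1,\dots,z^n)$ on a neighborhood of $x$ and take $e_i=\frac{\partial}{\partial\bar z^i}$, so that $[e_i,e_j]=0$. Writing $\alpha_k\edf\alpha_J^\tau(e_k)\in\g^{1,0}$, regarded as $\g$-valued functions on $W$, we have $\alpha_J^\tau=\sum_k\alpha_k\,d\bar z^k$.

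Since $\fg_J$ is tensorial, $\fg^\tau_J(e_i,e_j)=\fg_J(\tau_*e_i,\tau_*e_j)$ may be computed with any vector-field extensions of $\tau_*e_i$, $\tau_*e_j$ on $P|_W$. I would use the $G$-invariant lifts $\hat e_i\in\Gamma(P|_W,p_*^{-1}(T^{0,1}_U))$ characterized by $\hat e_i|_{\tau(x)}=\tau_*e_i$ and $R_{g*}(\hat e_i|_y)=\hat e_i|_{yg}$; in the trivialization $P|_W\cong W\times G$ induced by $\tau$ they are horizontal and constant along the fibres, so $[\hat e_i,\hat e_j]=0$. Because $(\hat e_i)^{0,1}_J=\beta_J(\hat e_i)=\hat e_i-\alpha_J(\hat e_i)^\#$, the definition of $\fg_J$ gives
\begin{equation*}
\fg^\tau_J(e_i,e_j)=-\alpha_J\big([\beta_J\hat e_i,\beta_J\hat e_j]\big)\big|_{\tau(x)}.
\end{equation*}
The equivariance property (Pa) yields $\alpha_J(\hat e_i)|_{\tau(x)g}=\Ad_{g^{-1}}\alpha_i(x)$, and since $L_{g*}\circ\Ad_{g^{-1}}=R_{g*}$ the vertical field $\alpha_J(\hat e_i)^\#$ restricts on each fibre to the right-invariant field on $G$ with value $\alpha_i(x)$ at the unit element.

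I would then expand
\begin{equation*}
[\beta_J\hat e_i,\beta_J\hat e_j]=[\hat e_i,\hat e_j]-[\hat e_i,\alpha_J(\hat e_j)^\#]-[\alpha_J(\hat e_i)^\#,\hat e_j]+[\alpha_J(\hat e_i)^\#,\alpha_J(\hat e_j)^\#]
\end{equation*}
and compute $\alpha_J$ of each term at $\tau(x)$. The first bracket vanishes. Each mixed bracket is vertical; since $\hat e_i$ differentiates only the base-dependence of $\alpha_j$, property (Pb) identifies $\alpha_J$ of it with the derivative $e_i\alpha_j$, respectively $e_j\alpha_i$. The purely vertical bracket is computed fibrewise from $[a^R,b^R]=-[a,b]^R$ for right-invariant fields, giving $-[\alpha_i,\alpha_j]$, which lies in $\g^{1,0}$ because $\g^{1,0}$ is a subalgebra. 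Collecting the four contributions with their signs gives $\fg^\tau_J(e_i,e_j)=e_i\alpha_j-e_j\alpha_i+[\alpha_i,\alpha_j]$. On the other hand, from $\alpha_J^\tau=\sum_k\alpha_k\,d\bar z^k$ one computes $\bp\alpha_J^\tau(e_i,e_j)=e_i\alpha_j-e_j\alpha_i$ and $\tfrac12[\alpha_J^\tau\wedge\alpha_J^\tau](e_i,e_j)=[\alpha_i,\alpha_j]$, and the two sides agree, proving \eqref{fg-tau-J}.

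The step I expect to be the main obstacle is the bookkeeping in the two bracket computations: keeping track of how the $\Ad$-equivariance of $\alpha_J(\hat e_i)$ converts the fundamental vertical fields into right-invariant fields, and fixing the sign in $[a^R,b^R]=-[a,b]^R$ so that the Lie-bracket term emerges with exactly the sign needed to reproduce $\tfrac12[\alpha_J^\tau\wedge\alpha_J^\tau]$. Once the coordinate frame is fixed and these conventions are pinned down, the remainder is routine linear algebra.
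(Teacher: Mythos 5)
Your proof is correct and follows essentially the same route as the paper: your $G$-invariant lifts $\hat e_i$ are precisely the paper's horizontal lifts $\tilde\tau(e_i)$ for the flat connection making $\tau$ parallel, and your three bracket evaluations (mixed brackets via $G$-invariance and property (Pb), vertical bracket via right-invariant fields with $[a^R,b^R]=-[a,b]^R$) are exactly the paper's Remarks \ref{xi-nu-lambda-rem} and \ref{nu-cdot-lambda-nu-cdot-lambda'}. The only (harmless) difference is that you specialize to the commuting coordinate frame $e_i=\partial/\partial\bar z^i$, which kills the $-\alpha_J^\tau([\xi,\eta])$ term the paper keeps by working with arbitrary $(0,1)$ vector fields.
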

 \begin{proof}
 Note first that $\tau$ defines a smooth map 
 $$
 \gamma^\tau:p^{-1}(W)\to G
 $$
 uniquely determined by the condition
$
 \tau(p(y))\gamma^\tau(y)=y$.
 Using this map we obtain a monomorphism 
 $$
 \tilde\tau: p^*(T^{0,1}_W)\to p_*^{-1}(T^{0,1}_W)
 $$
 of vector bundles on $p^{-1}(W)$ given by
 \begin{equation}
 \tilde\tau_y(v)=R_{\gamma^\tau(y)*}(\tau_*(v))	\ \forall y\in p^{-1}(W)\; \forall v\in T^{0,1}_{p(y)}.
 \end{equation}
By definition, $\tilde\tau$ verifies the $G$-invariance property:
\begin{equation}\label{G-inv-tilde-tau}
R_{g*}\circ \tilde\tau=	\tilde\tau\ \forall g\in G.
\end{equation}
Note that $\tilde\tau$ is just the ``horizontal lift" operator with respect to the unique flat connection on $P_W$ which makes $\tau$ parallel.

For a vector field $\xi\in A^{0,1}(W)$ let $\tilde\tau(\xi)$ be the section of $p_*^{-1}(T^{0,1}_W)\subset T^\C_{P_W}$ corresponding to $\xi$ via $\tilde\tau$. Formula (\ref{G-inv-tilde-tau}) shows that $\tilde\tau(\xi)$ is a $G$-invariant vector field on $P_W$. The obtained map 
$$\alpha_J(\tilde \tau(\xi)):P_W\edf p^{-1}(W)\to \g^{1,0}$$
will be $\Ad$-equivariant, so can be regarded as a section in the associated vector bundle $P_W\times_{\Ad}\g^{1,0}\subset P_W\times_{\Ad}\g^{\C}$.

 Let $\xi$, $\eta\in \Gamma(W,T^{0,1}_W)$. Using the notations introduced in section \ref{VectorFieldsOnP} of the Appendix, formula  (\ref{ProjOnT{0,1}}) shows that the projections of $\tilde\tau(\xi)$,  $\tilde\tau(\eta)$ on $T^{0,1}_{P,J}$ are given by
$$
 \tilde\tau(\xi)^{0,1}= \tilde\tau(\xi)-\alpha_J(\tilde \tau(\xi))^\nu, \  \tilde\tau(\eta)^{0,1}= \tilde\tau(\eta)-\alpha_J(\tilde \tau(\eta))^\nu,
$$
so, taking into account that $\tilde\tau$ commutes with $[\cdot,\cdot]$, and Remarks \ref{xi-nu-lambda-rem}, \ref{nu-cdot-lambda-nu-cdot-lambda'}:
\begin{equation}
\begin{split}
[\tilde\tau(\xi)^{0,1},\tilde\tau(\eta)^{0,1}]&=[ \tilde\tau(\xi),\tilde\tau(\eta)]+[\tilde\tau(\eta),\alpha_J(\tilde \tau(\xi))^\nu]-[\tilde\tau(\xi),\alpha_J(\tilde \tau(\eta))^\nu]\\
&\ \ \  +[\alpha_J(\tilde \tau(\xi))^\nu,\alpha_J(\tilde \tau(\eta))^\nu]=\\
&=[ \tilde\tau(\xi),\tilde\tau(\eta)]+\big(\tilde\tau(\eta)(\alpha_J(\tilde \tau(\xi)))\big)^\nu-\big(\tilde\tau(\xi)(\alpha_J(\tilde \tau(\eta)))\big)^\nu \\
&\ \ \ -[\alpha_J(\tilde \tau(\xi)),\alpha_J(\tilde \tau(\eta))]^\nu.
\end{split}
\end{equation}

Since $\tilde\tau(\eta)(\alpha_J(\tilde \tau(\xi))$, $\tilde\tau(\xi)(\alpha_J(\tilde \tau(\eta))$ and $[\alpha_J(\tilde \tau(\xi)),\alpha_J(\tilde \tau(\eta))]$ are $\g^{1,0}$-valued maps, property (Pb) gives
$$
\alpha_J\big(\big(\tilde\tau(\eta)(\alpha_J(\tilde \tau(\xi)))\big)^\nu\big)=\tilde\tau(\eta)(\alpha_J(\tilde \tau(\xi)), \ \alpha_J\big(\big(\tilde\tau(\xi)(\alpha_J(\tilde \tau(\eta)))\big)^\nu\big)= \tilde\tau(\xi)(\alpha_J(\tilde \tau(\eta)), 
$$
$$
\alpha_J\big([\alpha_J(\tilde \tau(\xi)),\alpha_J(\tilde \tau(\eta))]^\nu\big)=[\alpha_J(\tilde \tau(\xi)),\alpha_J(\tilde \tau(\eta))],
$$ 
so, taking into account the definition of $\fg_J$ and that $\tilde\tau$ commutes with $[\cdot,\cdot]$, 
\begin{equation*}
\begin{split}
\fg_J(\tilde\tau(\xi),\tilde\tau(\eta))=&-\alpha_J(\tilde\tau([\xi,\eta]))-\tilde\tau(\eta)(\alpha_J(\tilde \tau(\xi))+\tilde\tau(\xi)(\alpha_J(\tilde \tau(\eta))\\
&+[\alpha_J(\tilde \tau(\xi)),\alpha_J(\tilde \tau(\eta))].	
\end{split}	
\end{equation*}
Composing from the right with $\tau$ and taking into account that $\tilde\tau(\xi)$, $\tilde\tau(\eta)$ are tangent to $\im(\tau)$ and that their restrictions to $\im(\tau)$ coincide with $\tau_*(\xi)$, respectively $\tau_*(\eta)$, we obtain
\begin{align*}
\fg^\tau_J(\xi,\eta)&=\fg_J(\tau_*(\xi),\tau_*(\eta))=-\alpha_J^\tau([\xi,\eta])-\eta(\alpha_J^\tau(\xi))+\xi(\alpha_J^\tau(\eta))+[\alpha_J^\tau(\xi),\alpha_J^\tau(\eta)]\\
&=(d\alpha_J^\tau)(\xi,\eta)+\frac{1}{2}[\alpha_J^\tau\wedge\alpha_J^\tau](\xi,\eta)=(\bp\alpha_J^\tau)(\xi,\eta)+\frac{1}{2}[\alpha_J^\tau\wedge\alpha_J^\tau](\xi,\eta).
\end{align*}
For the last equality, we took into account that $\xi$, $\eta$ are vector fields of type $(0,1)$.
 \end{proof}
\subsection{The associated Dolbeault operator}\label{Dolbeault-J-rho}
	\label{DolbOp}
Let $\rho:G\to \GL(F)$ be a representation of $G$ on a finite dimensional complex vector space $F$ and $E^\rho\edf P\times_{\rho}F$ be the associated vector bundle. Let $J\in {\cal J}_P$ be a bundle ACS on $P$ and   $s\in A^0(U,E^\rho)$. 
Regard $s$ as an element of ${\cal C}^\infty_\rho(P,F)$, and note that the differential $\bp_J(s)\in A^{0,1}_J(P,F)$ is a tensorial $F$-valued (0,1)-form of type $\rho$ on $P$, so it can be regarded as an element $\bp^\rho_J s\in A^{0,1}(U,E^\rho)$. The obtained first order differential operator $\bp^\rho_J:A^0(U,E^\rho)\to A^{0,1}(U,E^\rho)$ is a Dolbeault operator on $E^\rho$. We will use the simpler notation $\bp_J=\bp^\rho_J$ when $\rho$ is obvious from the context.

Via the identification $\Gamma(W,E^\rho)\simeq {\cal C}^\infty(W,F)$ induced by a local section $\tau\in \Gamma(W,P)$, $\bp^\rho_J$  is given  by the formula
\begin{equation}\label{bp-J{rho,tau}}
\bp_J^{\rho,\tau}s=\bp s+  \rg(\alpha^\tau_J) s,
\end{equation}
where $\rg:\g\to \gl(F)$ is the Lie algebra morphism associated with $\rho$. The $\End(E^\rho)$-valued (0,2)-form  $F_{\bp^\rho_J}$ associated with $(\bp^\rho_J)^2:A^0(U,E^\rho)\to A^{0,2}(U,E^\rho)$ (which is the obstruction to the integrability of $\bp^\rho_J$) is given by
\begin{equation}\label{F-{bp-rho-J}}
F_{\bp^\rho_J}=\rg(\fg_J).
\end{equation}
Its pull back via $\tau$ is
$$F_{\bp^\rho_J}^\tau=\bp \rg(\alpha^\tau_J)+ \rg(\alpha^\tau_J)\wedge \rg(\alpha^\tau_J)\in A^{0,2}(W,\End(E^\rho)),$$
where $\wedge$ on the right  is induced by the wedge product of forms and   composition of endomorphisms. 

Formula (\ref{F-{bp-rho-J}}) shows  that the obtained map
$$
D_\rho:{\cal J}_P\to {\cal D}_{E^\rho}
$$
maps the space ${\cal J}_P^{\mathrm{int}}$ of integrable bundle ACS on $P$ into the space ${\cal D}^{\mathrm{int}}_{E^\rho}$ of integrable Dolbeault operators on $E^\rho$. 

\begin{re}
The map $D_{\rho_{\rm can}}$ associated with the canonical representation  
$$\rho_{\rm can}:\GL(r,\C)\to\GL(\C^r)$$
is a bijection and restricts to a bijection ${\cal J}^{\mathrm{int}}_P\to {\cal D}^{\mathrm{int}}_{E_P}$, where $E_P\edf P\times_{\GL(r,\C)}\C^r$. \end{re}

Note that the canonical form $\theta$ on $\GL(r,\C)$ can be written as $g^{-1}dg$, so, identifying $\gl(r,\C)^{1,0}$	 with $\gl(r,\C)$ in the standard way, the transformation formula (\ref{alpha-J-tau f}) becomes
$$
\alpha_J^{\tau f}=	\Ad_{f^{-1}}(\alpha_J^\tau)+f^{-1}\bp f,
$$
which is the well-known  transformation formula  for the $\gl(r,\C)$-valued (0,1) form associated with a Dolbeault operator in a trivialization.

 \subsection{The  affine space \texorpdfstring{${\cal J}_P$}{J} and its gauge symmetry}
 
 Taking into account the properties (Pa), (Pb) it follows that the space ${\cal A}_P$  has a natural structure of an affine space with model space $A^{0,1}_\Ad(P,\g^{1,0})\simeq A^{0,1}(U,\Ad(P))$ of tensorial  (0,1)-forms of type $\Ad$ with values in $\g^{1,0}\simeq\g $ on $P$. The space  ${\cal J}_P$ of bundle ACS on $P$ will also be regarded as an $A^{0,1}_\Ad(P,\g^{1,0})$-affine space via the bijection $J\mapsto\alpha_J$ given by  Remark \ref{bijection-J-alpha_J}.
 
 Let $\iota:G\to \Aut(G)$ be the group morphism which assigns to $g\in G$ the inner automorphism $\iota_g$. The group ${\cal C}^{\infty}_\iota(P,G)$ of $\iota$-equivariant maps $P\to G$ can be identified with the space of sections $\Gamma(U,\iota(P))$, where $\iota(P)\edf P\times_\iota G$ can be identified with the bundle of fiberwise automorphisms of $P$. Therefore the space ${\cal C}^{\infty}_\iota(P,G)$ can also be identified with the gauge group $\Aut(P)$. The gauge transformation $\tilde\sigma$ associated with $\sigma\in {\cal C}^{\infty}_\iota(P,G)$ is given explicitly by
\begin{equation}\label{tilde-sigma}
 \tilde\sigma(y)=y\sigma(y)\ \forall y\in P.	
\end{equation}

 When no confusion can occur, we will write $\sigma$ instead of $\tilde\sigma$ to save on notation.

 \begin{dt} \label{def-lg_J-kg-J} Let $J\in {\cal J}_P$.  We define
\begin{alignat*}{8}
\bar\lg_J&: {\cal C}^{\infty}_\iota(P,G)&\to\ & A^{0,1}_\Ad(P,\g^{1,0}), &&\bar\lg_J(\sigma)&=&\sigma^*(\theta^{1,0})^{0,1}_J &\\
 \bar \kg_J&:A^{0,1}_\Ad(P,\g^{1,0})&\ \to\ & A^{0,2}_\Ad(P,\g^{1,0}), &\ & \bar\kg_J(b)&=&\bp_J b+\frac{1}{2}[b\wedge b],	
\end{alignat*} 
where $\bp_J$ stands for $\bp_J^\Ad$ (see section \ref{Dolbeault-J-rho}) and  $A^{0,q}_\Ad(P,\g^{1,0})$ stands for the space of $\g^{1,0}$-valued tensorial forms of type $\Ad$ on $P$ of bidegree $(0,q)$.
\end{dt}
\begin{ex}\label{ex-lg-kg}
In the special case of the trivial bundle $U\times G$ endowed with the product bundle ACS $J_0$	 we obtain (via the  standard identification $\g\simeq\g^{1,0}$) the maps
\begin{alignat*}{8}
\bar\lg&: {\cal C}^{\infty}(U,G)&\to\ & {\cal C}^\infty(U,\extp^{0,1}_{\;U}\otimes\g), &&\bar\lg(\sigma)&=&\sigma^*(\theta^{1,0})^{0,1} &\\
 \bar \kg&:{\cal C}^\infty(U,\extp^{0,1}_{\;U}\otimes\g)&\ \to\ & {\cal C}^\infty(U,\extp^{0,2}_{\;U}\otimes\g), &\ & \bar\kg(b)&=&\bp b+\frac{1}{2}[b\wedge b].	
\end{alignat*} 
Note that formula (\ref{J-pseudo-hol-eq}) in Remark \ref{bijection-for-J-hol-sections} can be written $\alpha^\tau_J=\bar\lg(\sigma)$, whereas formula (\ref{fg-tau-J}) in Proposition \ref{fg-tau-J-prop} can be written $\fg_J^\tau=\bar\kg(\alpha^\tau_J)$.
\end{ex}

\begin{lm}\label{bar-lg-sigma}
Let  $\sigma\in {\cal C}^{\infty}_\iota(P_W,G)$ and $\tau\in \Gamma(W,P)$. Put $\sigma_\tau\edf \sigma\circ\tau\in {\cal C}^\infty(W,G)$. We have
$$
\tau^*(\bar \lg_J(\sigma))=\sigma_\tau^*(\theta^{1,0})^{0,1}+(\Ad_{\sigma_\tau^{-1}}-\id)(\alpha_J^\tau).  
$$
\end{lm}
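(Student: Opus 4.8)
The plan is to prove the identity pointwise, checking the equality of the two $\g^{1,0}\simeq\g$-valued $(0,1)$-forms on $W$ by evaluating both sides on an arbitrary tangent vector $v\in T^{0,1}_{W,x}$ at a point $x\in W$. First I would record that both sides are genuinely of type $(0,1)$ on $W$: the right-hand side manifestly is, since $\alpha_J^\tau\in A^{0,1}(W,\g^{1,0})$ and the bundle endomorphism $\Ad_{\sigma_\tau^{-1}}-\id$ acts fiberwise; for the left-hand side one uses that $\bar\lg_J(\sigma)\in A^{0,1}_\Ad(P,\g^{1,0})$ is horizontal and of type $(0,1)_J$, and that for $v\in T^{1,0}_{W,x}$ the vector $\tau_*(v)$ lies in the conjugate splitting $p_*^{-1}(T^{1,0}_U)=T^{1,0}_{P,J}\oplus V^{0,1}$, so its $(0,1)_J$-component is vertical and is annihilated. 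Hence it suffices to compute on $(0,1)$-vectors.

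Next I would unwind the definition $\bar\lg_J(\sigma)=\sigma^*(\theta^{1,0})^{0,1}_J$. For $v\in T^{0,1}_{W,x}$ we have $p_*(\tau_*(v))=v\in T^{0,1}_{U,x}$, so $\tau_*(v)\in p_*^{-1}(T^{0,1}_U)$ and the projection formula (\ref{ProjOnT{0,1}}) for $\beta_J$ gives $(\tau_*(v))^{0,1}_J=\tau_*(v)-\alpha_J^\tau(v)^\#_{\tau(x)}$, using $\alpha_J(\tau_*(v))=\alpha_J^\tau(v)$ from Remark \ref{bijection-J-alpha_J}(3). Therefore
\[
\tau^*(\bar\lg_J(\sigma))(v)=\sigma^*(\theta^{1,0})\big(\tau_*(v)\big)-\sigma^*(\theta^{1,0})\big(\alpha_J^\tau(v)^\#_{\tau(x)}\big),
\]
a ``horizontal'' term plus a ``vertical correction''.

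The horizontal term is immediate from functoriality of pullback: $\sigma^*(\theta^{1,0})(\tau_*(v))=\theta^{1,0}((\sigma\circ\tau)_*(v))=\sigma_\tau^*(\theta^{1,0})(v)=\sigma_\tau^*(\theta^{1,0})^{0,1}(v)$, the last equality holding because $v$ is of type $(0,1)$. The vertical correction is the only real computation and the step I expect to be the main obstacle. Here I would differentiate $\sigma$ along the fundamental vector field $a^\#_y$, with $a=\alpha_J^\tau(v)\in\g^{1,0}$ and $y=\tau(x)$, using the $\iota$-equivariance $\sigma(yg)=g^{-1}\sigma(y)g$ (which is exactly what makes $\tilde\sigma(yg)=\tilde\sigma(y)g$). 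Writing $h=\sigma(y)$ and evaluating $\frac{d}{dt}\big|_{0}\sigma(y\exp(ta))=\frac{d}{dt}\big|_{0}\big(\exp(-ta)\,h\,\exp(ta)\big)$ in the left-invariant trivialization $\theta$ yields $\theta(\sigma_*(a^\#_y))=(\id-\Ad_{h^{-1}})(a)$; since $G$ is complex, $\Ad_{h^{-1}}$ preserves $\g^{1,0}$ and $a\in\g^{1,0}$, so $\theta^{1,0}$ records the same value. Thus the vertical correction equals $-(\id-\Ad_{\sigma_\tau(x)^{-1}})(\alpha_J^\tau(v))$.

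Adding the two contributions gives $\sigma_\tau^*(\theta^{1,0})^{0,1}(v)+(\Ad_{\sigma_\tau^{-1}}-\id)(\alpha_J^\tau)(v)$, which is precisely the claimed formula. The argument is entirely local and pointwise; the only delicate points are tracking the conjugation in the equivariance relation (to get the correct $\Ad_{\sigma_\tau^{-1}}$ rather than $\Ad_{\sigma_\tau}$) and checking that $\g^{1,0}$ is preserved at each stage, which legitimizes replacing $\theta^{1,0}$ by $\theta$ in the Maurer--Cartan computation.
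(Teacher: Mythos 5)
Your proof is correct and follows essentially the same route as the paper's: evaluate on $v\in T^{0,1}_{W,x}$, use the projection formula $(\tau_*v)^{0,1}_J=\tau_*v-\alpha_J^\tau(v)^\#_{\tau(x)}$, and compute $\theta(\sigma_*(a^\#_y))=(\id-\Ad_{\sigma(y)^{-1}})(a)$ by differentiating the $\iota$-equivariance relation, exactly as in the paper (your preliminary check that the pullback has no $(1,0)$-part is a harmless addition the paper leaves implicit). The only cosmetic caveat is that $\exp(ta)$ for $a\in\g^{1,0}$ should formally be handled as the paper does --- compute for $a\in\g$ and extend $\C$-linearly to $\g^\C$ before restricting to $\g^{1,0}$ --- which is precisely the content of your closing remark on $\Ad$ preserving $\g^{1,0}$.
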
	
 
 \begin{proof} Let $v\in T_{U,x}^{0,1}$, $y=\tau(x)\in P$ and $w\edf \tau_*(v)\in (p_*)^{-1}(T^{0,1}_U)_y\subset T_{P,y}^\C$, $a\edf\alpha_J(w)\in\g^{1,0}$. Since $w^{0,1}_J=w-\alpha_J(w)^\#_y=w-a^\#_y$, we have:
\begin{equation}\label{tau*(lg(sigma))}
 \tau^*(\bar\lg_J(\sigma))(v)=\theta^{1,0}(\sigma_*(w^{0,1}_J))=\theta^{1,0}(\sigma_*(w-a^\#_y)).
\end{equation}
 In general, for any $c\in \g$  we have
 \begin{equation}\label{theta(sigma*(c))}
\begin{split}
\theta(\sigma_*(c^\#_y))&=\theta\big(\frac{d}{dt}|_0 \Ad_{e^{-t c}} (\sigma(y))\big)=\theta(-r_{\sigma(y)*}(c)+ l_{\sigma(y)*}(c))=\\
&=c-l_{\sigma(y)*}^{-1}\circ r_{\sigma(y)*}(c)=c-\Ad_{\sigma(y)^{-1}}(c).	
\end{split}
 \end{equation}
This implies
$$
(\theta\otimes\id_\C)(\sigma_*(c^\#_y))=(\id-\Ad_{\sigma(y)^{-1}})(c)\  \forall c\in\g^\C,
$$
in particular 
$$
\theta^{1,0}(\sigma_*(c^\#_y))=(\id-\Ad_{\sigma(y)^{-1}})(c)\  \forall c\in\g^{1,0},
$$
so (\ref{theta(sigma*(c))}) gives $
\theta^{1,0}(\sigma_*(a^\#_y))=(\id-\Ad_{\sigma(y)^{-1}})(a)$ and (\ref{tau*(lg(sigma))}) becomes
\begin{align*}
 \tau^*(\bar\lg_J(\sigma))(v)&=\theta^{1,0}(\sigma_*(\tau_*(v))-(\id-\Ad_{\sigma(y)^{-1}})(\alpha_J(\tau_*(v))\\
 &=\theta^{1,0}(\sigma_{\tau*}(v))+(\Ad_{\sigma(y)^{-1}}-\id )(\alpha_J^\tau(v)).
\end{align*}
This proves the claim.

 \end{proof}

 \begin{pr} \label{fg-{J+b}}
Let $J\in {\cal J}_P$ and $b\in A^{0,1}_\Ad(P,\g^{1,0})$. We have
 $$
 \fg_{J+b}=\fg_J+\bar\kg_J(b).
 $$	
\end{pr}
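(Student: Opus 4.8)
The plan is to reduce the global identity to a computation in a local trivialization and then invoke Proposition~\ref{fg-tau-J-prop}. Recall from Remark~\ref{bijection-J-alpha_J} that the affine structure on $\mathcal{J}_P$ is the one for which $J\mapsto\alpha_J$ is affine, so $J+b$ is the bundle ACS with $\alpha_{J+b}=\alpha_J+b$. All three objects in the statement --- $\fg_{J+b}$, $\fg_J$ and $\bar\kappa_J(b)$ --- are tensorial $(0,2)$-forms of type $\Ad$ on $P$, equivalently $\Ad(P)$-valued $(0,2)$-forms on $U$, so it suffices to prove the equality after pulling back along the local sections $\tau\in\Gamma(W,P)$ of a trivializing cover, since two such global forms agreeing in every chart of a cover coincide.

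First I would fix such a $\tau$ and note that, $b$ being $\Ad$-tensorial, its local representative is the ordinary form $b^\tau\edf\tau^*(b)\in A^{0,1}(W,\g^{1,0})$, and $\alpha_{J+b}^\tau=\alpha_J^\tau+b^\tau$. Applying formula~(\ref{fg-tau-J}) of Proposition~\ref{fg-tau-J-prop} to $J+b$ (cf. the reformulation in Example~\ref{ex-lg-kg}) gives
$$\fg_{J+b}^\tau=\bp(\alpha_J^\tau+b^\tau)+\tfrac12[(\alpha_J^\tau+b^\tau)\wedge(\alpha_J^\tau+b^\tau)].$$
Expanding and using that for two $\g$-valued $(0,1)$-forms the graded bracket is symmetric, $[\alpha_J^\tau\wedge b^\tau]=[b^\tau\wedge\alpha_J^\tau]$, so the two cross terms add, I obtain
$$\fg_{J+b}^\tau=\fg_J^\tau+\bp b^\tau+[\alpha_J^\tau\wedge b^\tau]+\tfrac12[b^\tau\wedge b^\tau].$$
It then remains to recognize the last three terms as $\tau^*(\bar\kappa_J(b))$. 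By Definition~\ref{def-lg_J-kg-J}, $\bar\kappa_J(b)=\bp_J b+\tfrac12[b\wedge b]$ with $\bp_J=\bp_J^{\Ad}$; the local form of this Dolbeault operator, given by formula~(\ref{bp-J{rho,tau}}) with $\rho=\Ad$ (so $\rg=\ad=[\,\cdot\,,\cdot\,]$) and extended to forms by the Leibniz rule, reads $\tau^*(\bp_J b)=\bp b^\tau+[\alpha_J^\tau\wedge b^\tau]$, while $\tau^*\big(\tfrac12[b\wedge b]\big)=\tfrac12[b^\tau\wedge b^\tau]$ since pullback commutes with the bracket. Hence $\tau^*(\bar\kappa_J(b))$ equals exactly the trailing three terms above, giving $\fg_{J+b}^\tau=\fg_J^\tau+\tau^*(\bar\kappa_J(b))$ over every $\tau$, and the global identity follows.

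The routine but delicate point --- the only place where I would slow down --- is the bracket bookkeeping: verifying the graded symmetry $[\alpha_J^\tau\wedge b^\tau]=[b^\tau\wedge\alpha_J^\tau]$ for $(0,1)$-forms (so that the cross terms add rather than cancel), and checking that the extension of $\bp_J^{\Ad}$ to $(0,1)$-forms contributes precisely the cross term $[\alpha_J^\tau\wedge b^\tau]$ with the correct sign. An alternative, more intrinsic route would avoid $\tau$ altogether by computing how the splitting projection $\beta_J$ of~(\ref{ProjOnT{0,1}}), and hence the bracket $[A^{0,1}_J,B^{0,1}_J]$, changes when $\alpha_J$ is replaced by $\alpha_J+b$; this reproduces the same quadratic expansion directly from the definition $\fg_J(A,B)=-\alpha_J([A^{0,1}_J,B^{0,1}_J])$, but at the cost of heavier computations with vertical vector fields.
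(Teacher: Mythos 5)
Your proof is correct and follows essentially the same route as the paper: localize via a section $\tau$, apply Proposition~\ref{fg-tau-J-prop} to $J+b$ with $\alpha_{J+b}^\tau=\alpha_J^\tau+b^\tau$, and use formula~(\ref{bp-J{rho,tau}}) with $\rho=\Ad$ to identify $\tau^*(\bp_J b)=\bp b^\tau+[\alpha_J^\tau\wedge b^\tau]$, so that the expansion regroups as $\fg_J^\tau+\tau^*(\bar\kappa_J(b))$. The bracket bookkeeping you flag (graded symmetry of $[\cdot\wedge\cdot]$ on two $(0,1)$-forms, so the cross terms add) is exactly the step the paper performs silently in its displayed computation.
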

 \begin{proof}
 Put $J'\edf J+b$. Let $\tau\in\Gamma(W,P)$ be a local section of $P$, and note that the argument which justified formula (\ref{fg-J-in-A02}) gives $b^\tau\edf\tau^*(b)\in A^{0,1}(W,\g^{1,0})$. Taking into account formula (\ref{bp-J{rho,tau}}), it follows that the form  
 $$(\bp_J b)^\tau\edf \tau^*(\bp_J b)\in A^{0,2}(W,\g^{1,0}),$$
which corresponds to $\bp_J b$ in the local trivialization associated with $\tau$, is
$$
(\bp_J b)^\tau=\bp b^\tau+ [\alpha_{J}^\tau\wedge b^\tau].
$$
By Proposition \ref{fg-tau-J-prop}, we have
\begin{align*}\fg^\tau_{J'}&=\bp\alpha_{J'}^\tau+\frac{1}{2}[\alpha_{J'}^\tau\wedge\alpha_{J'}^\tau]=(\bp\alpha_{J}^\tau+\bp b^\tau)+\frac{1}{2}[(\alpha_{J}^\tau+b^\tau)\wedge(\alpha_{J}^\tau+b^\tau)]\\
 &=\fg^\tau_J+\bp b^\tau+ [\alpha_{J}^\tau\wedge b^\tau]+\frac{1}{2}[b^\tau\wedge b^\tau]=\fg^\tau_J+(\bp_J b)^\tau+\frac{1}{2}[b^\tau\wedge b^\tau],	
\end{align*}
 which obviously coincides with $\tau^*(\fg_J+\bar\kg_J(b))$. 
 \end{proof}

We let the group  ${\cal C}^\infty_{\iota}(P,G)$ act on ${\cal J}_P$ from the right by 
 $$
J\cdot \sigma=\tilde\sigma_{*}^{-1}\circ J\circ \tilde\sigma_{*}. 
 $$
 In other words $J\cdot \sigma$ is defined such that  the gauge transformation $\tilde\sigma$ associated with $\sigma$ becomes a pseudo-holomorphic map $(P,J\cdot \sigma)\to (P,J)$.
 Taking into account that $\sigma_*$ leaves the subbundle $p_*^{-1}(T^{0,1}_U)\subset T^\C_P$ invariant, it is easy to see that the corresponding $\Aut(P)$-action on ${\cal A}_P$ is:
 $$
 \alpha\cdot\sigma=\alpha \circ \tilde\sigma_*.
 $$
 \begin{pr} \label{J-cdot-sigma}
 Let $\sigma\in {\cal C}^\infty_{\iota}(P,G)$.  For any $J\in {\cal J}_P$ we have:
\begin{enumerate}
\item $J\cdot\sigma=J+\bar\lg_J(\sigma)$.
\item $\fg_{J\cdot\sigma}=\Ad_{\sigma^{-1}}(\fg_J)$.	
\end{enumerate}
 \end{pr}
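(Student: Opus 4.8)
The plan is to establish the two assertions separately: (1) follows by reducing to the transformation formulas already proved, while (2) admits a direct geometric argument exploiting that $\tilde\sigma$ is pseudo-holomorphic. For (1), since $J\mapsto\alpha_J$ is the affine bijection of Remark \ref{bijection-J-alpha_J} and the induced action on ${\cal A}_P$ is $\alpha\cdot\sigma=\alpha\circ\tilde\sigma_*$, the claim is equivalent to $\alpha_J\circ\tilde\sigma_*=\alpha_J+\bar\lg_J(\sigma)$. I would verify this after pulling back by an arbitrary local section $\tau\in\Gamma(W,P)$, since elements of ${\cal A}_P$ are determined by such pullbacks together with properties (Pa), (Pb). The crucial observation is that $\tilde\sigma\circ\tau=\tau\sigma_\tau$ with $\sigma_\tau=\sigma\circ\tau$, whence $\alpha_{J\cdot\sigma}^\tau=\alpha_J^{\tau\sigma_\tau}$ directly from the definitions. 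Expanding the right-hand side by the transformation formula (\ref{alpha-J-tau f}) yields $\Ad_{\sigma_\tau^{-1}}(\alpha_J^\tau)+\sigma_\tau^*(\theta^{1,0})^{0,1}$, and Lemma \ref{bar-lg-sigma} identifies $\tau^*(\bar\lg_J(\sigma))=\sigma_\tau^*(\theta^{1,0})^{0,1}+(\Ad_{\sigma_\tau^{-1}}-\id)(\alpha_J^\tau)$. Adding $\alpha_J^\tau$ to the latter reproduces the former, so $\alpha_{J\cdot\sigma}^\tau=\alpha_J^\tau+\tau^*(\bar\lg_J(\sigma))$ for every $\tau$, which is exactly (1).

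For (2), I would argue straight from the definition of $\fg$. By construction $\tilde\sigma:(P,J\cdot\sigma)\to(P,J)$ is pseudo-holomorphic, so $\tilde\sigma_*$ intertwines the type decompositions, $\tilde\sigma_*(C^{0,1}_{J\cdot\sigma})=(\tilde\sigma_*C)^{0,1}_J$ for every vector field $C$. Combining this with $\alpha_{J\cdot\sigma}=\alpha_J\circ\tilde\sigma_*$ and the naturality of the Lie bracket under push-forward gives, for vector fields $A,B$,
$$\fg_{J\cdot\sigma}(A,B)=-\alpha_J\big([(\tilde\sigma_*A)^{0,1}_J,(\tilde\sigma_*B)^{0,1}_J]\big)=\fg_J(\tilde\sigma_*A,\tilde\sigma_*B)=(\tilde\sigma^*\fg_J)(A,B),$$
that is, $\fg_{J\cdot\sigma}=\tilde\sigma^*\fg_J$. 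It then remains to compute this pull-back. Using $\tilde\sigma_*(A)=R_{\sigma(y)*}(A)+[\theta(\sigma_*A)]^\#_{\tilde\sigma(y)}$ and the horizontality of the tensorial form $\fg_J$, the vertical correction drops out of each argument, reducing the value at $y$ to $\fg_J(R_{\sigma(y)*}A,R_{\sigma(y)*}B)$; the equivariance $R_g^*\fg_J=\Ad_{g^{-1}}\fg_J$, applied pointwise with $g=\sigma(y)$, then yields $\tilde\sigma^*\fg_J=\Ad_{\sigma^{-1}}(\fg_J)$.

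The step requiring the most care is this final evaluation of $\tilde\sigma^*\fg_J$: because $\sigma(y)$ depends on the base point, $\tilde\sigma$ is \emph{not} a genuine right translation and the $\Ad$-equivariance cannot be invoked globally. The argument succeeds only because $\fg_J$ is tensorial, so its horizontality annihilates the point-dependent vertical term $[\theta(\sigma_*\cdot)]^\#$ in both slots, after which the equivariance relation applies pointwise for the single element $\sigma(y)$; I would verify explicitly that all mixed and vertical contributions vanish before using equivariance. As a consistency check, combining (1) and (2) with Proposition \ref{fg-{J+b}} recovers the structure identity $\bar\kg_J(\bar\lg_J(\sigma))=(\Ad_{\sigma^{-1}}-\id)(\fg_J)$, which could otherwise be proved by a more laborious direct computation.
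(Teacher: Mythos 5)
Your proposal is correct and follows essentially the same route as the paper's own proof: part (1) by pulling back via an arbitrary local section and combining the transformation formula (\ref{alpha-J-tau f}) with Lemma \ref{bar-lg-sigma}, and part (2) by using the pseudo-holomorphicity of $\tilde\sigma$ and bracket naturality to get $\fg_{J\cdot\sigma}=\tilde\sigma^*\fg_J$, then evaluating via the pointwise decomposition $\tilde\sigma_*(v)=R_{\sigma(y)*}(v)+\theta(\sigma_*(v))^\#$ and the tensoriality (horizontality plus $\Ad$-equivariance) of $\fg_J$. Your explicit caveat that $\tilde\sigma$ is not a genuine right translation, so equivariance may only be applied pointwise after the vertical terms are killed, is exactly the point the paper's proof uses implicitly.
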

 \begin{proof} Put $J'\edf J\cdot\sigma$.\\
(1) Let $\tau\in \Gamma(W,P)$ be a local section. As in Lemma \ref{bar-lg-sigma} put $\sigma_\tau\edf \sigma\circ\tau\in {\cal C}^\infty(W,G)$. We have
$$
\alpha_{J'}^\tau=\alpha_{J'}\circ \tau_*=\alpha_{J}\circ  \tilde \sigma_*\circ \tau_*=\alpha_J^{\tilde \sigma\circ \tau}
$$
on   $T^{0,1}_W$. By (\ref{tilde-sigma}) we know that
$
\tilde \sigma\circ \tau=\tau  \sigma_\tau$,
so, by the transformation formula (\ref{alpha-J-tau f}), it follows:
$$\alpha_J^{\tilde\sigma\circ\tau}=\Ad_{\sigma_\tau^{-1}}(\alpha_J^\tau)+\sigma_\tau^*(\theta^{1,0})^{0,1}.
$$
On the other hand, by Lemma \ref{bar-lg-sigma},  
$$
\tau^*(\bar\lg_J(\sigma))=\sigma_\tau^*(\theta^{1,0})^{0,1}+(\Ad_{\sigma_\tau^{-1}}-\id)(\alpha_J^\tau),
$$
which proves the claim.
\vspace{2mm}\\
(2)  For complex vector fields $\xi$, $\eta$ on $P$ we have:
\begin{align*}	
 \fg_{J'}(\xi,\eta)&=-\alpha_{J'}([\xi^{0,1}_{J'},\eta^{0,1}_{J'}])=-\alpha_J(\tilde\sigma_*([\xi^{0,1}_{J'},\eta^{0,1}_{J'}])=-\alpha_J([\tilde\sigma_*(\xi^{0,1}_{J'}),\tilde\sigma_*(\eta^{0,1}_{J'})])=\\
&=-\alpha_J([\tilde\sigma_*(\xi)^{0,1}_{J},\tilde\sigma_*(\eta)^{0,1}_{J}])=\fg_J(\tilde\sigma_*(\xi),\tilde\sigma_*(\eta)).
\end{align*}
For a tangent vector $v\in T_{P,y}^\C$ we have $\tilde\sigma_*(v)=R_{\sigma(y)*}(v)+ \theta(\sigma_*(v))^{\#}_{y}$, where the second term is vertical. Since $\fg_J$ is a tensorial 2-form, we obtain 
$$\fg_J(\tilde\sigma_*(\xi),\tilde\sigma_*(\eta))=\Ad_{\sigma^{-1}}(\fg_J)(\xi,\eta),$$
which proves the claim.
 \end{proof}
 
 Combining Proposition \ref{fg-{J+b}} with Proposition \ref{J-cdot-sigma}, we obtain: 
 $$
\Ad_{\sigma^{-1}}(\fg_J)= \fg_{J\cdot\sigma}=\fg_J+\bar\kg_J(\bar \lg_J(\sigma)), 
 $$
 so we obtain the following formula for the composition $\bar\kg_J\circ\bar\lg_J$.

 \begin{co}\label{kj-lj}
With the notations introduced in Definition \ref{def-lg_J-kg-J}, we have:
 $$
  \bar\kg_J\circ\bar\lg_J (\sigma)=(\Ad_{\sigma^{-1}}-\id)(\fg_J).
 $$
 In particular, if $\fg_J=0$, we have 	$ \bar\kg_J\circ\bar\lg_J=0$.
 \end{co}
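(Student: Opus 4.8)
The plan is to evaluate the form $\fg_{J\cdot\sigma}$ in two different ways and compare the outcomes. All the analytic content has already been packaged into Propositions \ref{fg-{J+b}} and \ref{J-cdot-sigma}, so the corollary reduces to a purely algebraic manipulation.

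First I would invoke Proposition \ref{J-cdot-sigma}(2), which records the gauge-covariance of the obstruction form and gives directly
$$
\fg_{J\cdot\sigma}=\Ad_{\sigma^{-1}}(\fg_J).
$$
This furnishes the right-hand side of the desired identity.

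Next I would compute the same quantity through the affine description of the gauge action. By Proposition \ref{J-cdot-sigma}(1) we have $J\cdot\sigma=J+\bar\lg_J(\sigma)$, so setting $b\edf\bar\lg_J(\sigma)\in A^{0,1}_\Ad(P,\g^{1,0})$ and applying Proposition \ref{fg-{J+b}} yields
$$
\fg_{J\cdot\sigma}=\fg_{J+b}=\fg_J+\bar\kg_J(b)=\fg_J+\bar\kg_J(\bar\lg_J(\sigma)).
$$

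Equating the two expressions for $\fg_{J\cdot\sigma}$ and subtracting $\fg_J$ from both sides isolates $\bar\kg_J\circ\bar\lg_J(\sigma)=(\Ad_{\sigma^{-1}}-\id)(\fg_J)$, which is the claim; the special case $\fg_J=0$ then follows at once, since the right-hand side vanishes for every $\sigma$. There is no genuine obstacle here: the only point to watch is that the two named propositions really do compute the same object $\fg_{J\cdot\sigma}$, i.e.\ that the substitution $b=\bar\lg_J(\sigma)$ in Proposition \ref{fg-{J+b}} is legitimate, and this is guaranteed precisely by the identification of $J\cdot\sigma$ with $J+\bar\lg_J(\sigma)$ in Proposition \ref{J-cdot-sigma}(1).
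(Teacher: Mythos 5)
Your proof is correct and is essentially identical to the paper's own argument: the paper likewise combines Proposition \ref{J-cdot-sigma} (both parts) with Proposition \ref{fg-{J+b}} to compute $\fg_{J\cdot\sigma}$ in two ways, namely $\Ad_{\sigma^{-1}}(\fg_J)=\fg_{J\cdot\sigma}=\fg_J+\bar\kg_J(\bar\lg_J(\sigma))$, and then cancels $\fg_J$. Nothing is missing.
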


\section{The  Hölder version   of the Newlander-Nirenberg  theorem for bundles}\label{proof-section}
Let $P$ be ${\cal C}^\infty$ principal  $G$-bundle on $U$ and $\kappa\in [0,+\infty]$.   A bundle ACS of class ${\cal C}^\kappa$  on $P$ is an almost complex structure on $P$ satisfying the conditions of Definition \ref{BdACS} which, regarded as section of the vector bundle $\End(T_P)$, is of class ${\cal C}^\kappa$. Equivalently, a bundle ACS of class ${\cal C}^\kappa$  on $P$ is a  bundle ACS of class ${\cal C}^0$    on $P$, such that, for any local section $\tau\in \Gamma(W,P)$, the form $\alpha^\tau_J\in A^{0,1}(W,\g^{1,0})$ is of class (has coefficients in) ${\cal C}^\kappa$.

We will denote by ${\cal J}^\kappa_P$ the space of bundle ACS of class ${\cal C}^\kappa$ on $P$. It is an affine space with model space the space ${\cal C}^\kappa(U,\extp^{0,1}_{\;U}\otimes \Ad(P))$ of $\Ad(P)$-valued $(0,1)$-forms of class ${\cal C}^\kappa$.
Throughout this section we will fix a Hermitian inner product on the Lie algebra $\g$.

\subsection{The case \texorpdfstring{$n=1$}{0}}

In the case $n=1$ Theorem \ref{NNGCkappa} states that, under the given assumptions, for {\it any}  $J\in{\cal J}^\kappa_P$ there exists a $J$-pseudo-holomorphic section of class ${\cal C}^{\kappa+1}$ around any point $x\in U$.

Consider first the case of the closed Riemann surface $\P^1$. Choosing a partition of unity subordinate to the standard atlas 
$$\big\{\P^1\setminus\{\infty\}\textmap{\simeq}\C,\   \P^1\setminus\{0\}\textmap{\simeq}\C\big\}$$
 of $\P^1$ we  obtain explicit norms on the Hölder spaces ${\cal C}^\kappa(\P^1,\g)$, ${\cal C}^\kappa(\P^1,\extp^{0,1}_{\;\P^1}\otimes\g)$, see section \ref{H-spaces}.

The kernel of the operator
$$ \bp:{\cal C}^{\kappa+1}(\P^1,\g)\to {\cal C}^\kappa(\P^1, \extp^{0,1}_{\;\P^1}\otimes\g) $$
 is the space of constant maps $\P^1\to \g$ (which will be denoted by $\g$ to save on notation). On the other hand, by Dolbeault theorem and  Hölder elliptic regularity, the cokernel of this operator is identified with $H^1(\P^1,{\cal O}_{\P^1}\otimes_\C\g)$, which vanishes, because 
$$H^1(\P^1,{\cal O}_{\P^1}\otimes_\C\g)=H^1(\P^1,{\cal O}_{\P^1})\otimes_\C\g=0.$$
Let $K$ be a closed complement of $\g$ in the Banach space ${\cal C}^{\kappa+1}(\P^1,\g)$. Such a complement exists by the Hahn-Banach theorem, because $\g$ is finite dimensional. It follows that the restriction $\bp_0\edf \bp|_K:K\to {\cal C}^\kappa(\P^1, \extp^{0,1}_{\;\P^1}\otimes\g)$ is an isomorphism of Banach spaces. We can now state

\begin{pr}\label{M-N}
There exists an open neighborhood $N$ of 0 in ${\cal C}^\kappa(\P^1, \extp^{0,1}_{\;\P^1}\otimes\g)$ and, for any $\lambda\in N$,  a solution $u=\Ng(\lambda)\in K\subset {\cal C}^{\kappa+1}(\P^1,\g)$ of the equation
$$
\bar\lg(\exp(u))=\lambda
$$
such that the obtained map $\Ng:N_U\to K$ is holomorphic and satisfies: $\Ng(0)=0$, $d\Ng(0)=\bp_0^{-1}$.
\end{pr}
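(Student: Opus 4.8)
The plan is to rewrite the equation $\bar\lg(\exp(u))=\lambda$ as $\Phi(u)=\lambda$ for a single holomorphic map $\Phi$ between the two Hölder Banach spaces, arrange that its linearization at the origin is exactly $\bp$, and then invert $\Phi$ by the holomorphic inverse function theorem, using the isomorphism $\bp_0$ already produced above. Concretely, I would set
\[
\Phi\colon {\cal C}^{\kappa+1}(\P^1,\g)\to {\cal C}^\kappa(\P^1,\extp^{0,1}_{\;\P^1}\otimes\g),\qquad \Phi(u)\edf\bar\lg(\exp(u)),
\]
using the fixed identification $\g\simeq\g^{1,0}$ and the holomorphic exponential $\exp\colon\g\to G$. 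This is well defined with values in ${\cal C}^\kappa$ because $u\in{\cal C}^{\kappa+1}$ gives $\exp(u)\in{\cal C}^{\kappa+1}$ while $\bar\lg$ costs one derivative; solving the proposition then amounts to inverting $\Phi$, restricted to $K$, near $0$.

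To compute and control $\Phi$ I would invoke the Maurer--Cartan formula $\exp^*\theta=\frac{1-e^{-\ad u}}{\ad u}\,du$. Since the $\End(\g)$-valued coefficient $\frac{1-e^{-\ad u}}{\ad u}$ is a $0$-form and hence does not affect the type, taking the $(0,1)$-part (and using $\g\simeq\g^{1,0}$) yields the expansion
\[
\Phi(u)=\frac{1-e^{-\ad u}}{\ad u}(\bp u)=\sum_{k=0}^{\infty}\frac{(-1)^k}{(k+1)!}(\ad u)^k(\bp u).
\]
Each summand is the restriction to the diagonal of a bounded multilinear map, the boundedness coming from the fact that for $\kappa\in(0,\infty)\setminus\N$ the Hölder space ${\cal C}^\kappa(\P^1)$ is a Banach algebra and multiplication ${\cal C}^{\kappa+1}\times{\cal C}^\kappa\to{\cal C}^\kappa$ is continuous, together with continuity of the bracket $\g\times\g\to\g$. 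The factors $1/(k+1)!$ then force convergence of the series in operator norm on bounded sets (in fact for every $u$), so $\Phi$ is holomorphic.

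With $\Phi$ in hand the endgame is immediate: $\Phi(0)=\bar\lg(e)=0$, and the $k=0$ term of the series shows $d\Phi(0)=\bp$. Restricting to the closed complement $K$ of $\g$ gives $d(\Phi|_K)(0)=\bp_0$, which is a Banach space isomorphism by construction. By the holomorphic inverse function theorem, $\Phi|_K$ restricts to a biholomorphism from a neighborhood of $0$ in $K$ onto an open neighborhood $N$ of $0$ in ${\cal C}^\kappa(\P^1,\extp^{0,1}_{\;\P^1}\otimes\g)$; setting $\Ng\edf(\Phi|_K)^{-1}$ then gives a holomorphic map with $\Ng(0)=0$ and $d\Ng(0)=\bp_0^{-1}$, and $u=\Ng(\lambda)$ solves $\bar\lg(\exp(u))=\lambda$ by definition.

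I expect the only genuinely technical point internal to this proposition to be the verification that $\Phi$ is holomorphic in the Hölder category, i.e. the multiplicative estimates and the series convergence above; this is routine once the Banach-algebra property of ${\cal C}^\kappa$ and the continuity of the product ${\cal C}^{\kappa+1}\times{\cal C}^\kappa\to{\cal C}^\kappa$ are recorded. The substantive analytic input — that $\bp_0$ is an isomorphism, resting on $H^1(\P^1,{\cal O}_{\P^1})=0$ and Hölder elliptic regularity — has already been established before the statement, so no further hard estimate is required.
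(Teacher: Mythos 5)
Your proof is correct, and its skeleton coincides with the paper's: consider the map $\bar\dg=\bar\lg\circ\exp$ (your $\Phi$) from ${\cal C}^{\kappa+1}(\P^1,\g)$ to ${\cal C}^\kappa(\P^1,\extp^{0,1}_{\;\P^1}\otimes\g)$, show it is holomorphic with $d\bar\dg(0)=\bp$, restrict to the closed complement $K$ so that the differential becomes the isomorphism $\bp_0$, and apply the holomorphic inverse function theorem to define $\Ng$ as the local inverse. The genuine difference lies in how holomorphy and the linearization at $0$ are verified. The paper proves a general statement, Lemma \ref{holomorphy-lemma}, asserting that $s\mapsto s^*(\omega)^{0,1}$ is holomorphic for an arbitrary holomorphic $(1,0)$-form $\omega$ on a vector space, with differential $\dot s\mapsto\omega_s\cdot\bp\dot s$ at holomorphic $s$; this rests on Palais' composition and differentiability theorems together with a $\C$-differentiability criterion for maps of Banach spaces, and is applied here with $\omega=\exp^*(\theta^{1,0})$. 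You instead exploit the specific form of $\omega$, writing $\Phi(u)=\frac{1-e^{-\ad u}}{\ad u}(\bp u)$ via the classical differential-of-the-exponential formula and summing the resulting series of continuous $(k{+}1)$-homogeneous polynomials using the Banach-algebra property of ${\cal C}^\kappa(\P^1)$ and the continuity of ${\cal C}^{\kappa+1}\times{\cal C}^\kappa\to{\cal C}^\kappa$; your type bookkeeping is legitimate because $\ad(u(x))$ is $\C$-linear ($\g$ being a complex Lie algebra), so its complexification preserves $\g^{1,0}\oplus\g^{0,1}$ and the coefficient indeed sends $\bp u$ to the $(0,1)$-component. Your route is more elementary and self-contained, avoids Palais' theorems entirely, and even yields that $\Phi$ is entire with explicit bounds; what the paper's formulation buys is generality and reuse --- Lemma \ref{holomorphy-lemma} covers arbitrary $V$, $F$, $\omega$, works on compact manifolds with boundary, computes the differential at any holomorphic base point, and is invoked again verbatim in the case $n\geq 2$ (Lemma \ref{inclusion1}), where no such closed-form series for the nonlinearity would suffice by itself. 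The substantive analytic input, the isomorphism $\bp_0$ coming from $H^1(\P^1,{\cal O}_{\P^1})=0$ and Hölder elliptic regularity, is used identically in both arguments.
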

\begin{proof}
We make use of Lemma \ref{holomorphy-lemma} proved below, taking in this Lemma $U=\P^1$, $V=\g$, $F=\g^{1,0}\simeq\g$, $\omega=\exp^*(\theta^{1,0})$.	 It follows that the composition $\bar\dg=\bar\lg\circ\exp$ defines a holomorphic map ${\cal C}^{\kappa+1}(\P^1,\g)\to {\cal C}^\kappa(\P^1, \extp^{0,1}_{\;\P^1}\otimes\g)$. Moreover, for $s=0$, the map $\omega_0\in {\cal C}^{\kappa+1}(\bar U,\Hom(\g,\g))$ is the constant map $\bar U\ni x\mapsto \id_\g$, so by Lemma \ref{holomorphy-lemma} (3), the differential at 0 of $\bar\dg$ is $\bp:{\cal C}^{\kappa+1}(\P^1,\g)\to {\cal C}^\kappa(\P^1, \extp^{0,1}_{\;\P^1}\otimes\g)$.  
 It follows that the differential of the restriction
$$
\bar\dg_K\edf\bar\dg|_K:K\to {\cal C}^\kappa(\P^1, \extp^{0,1}_{\;\P^1}\otimes\g)
$$
at 0 is the invertible operator $\bp_0$, so $\bar\dg_K$ is a local biholomorphism around 0. Let $M\subset K$, $N\subset {\cal C}^\kappa(\P^1, \extp^{0,1}_{\;\P^1}\otimes\g)$ be open neighborhoods of 0 in the respective spaces such that $\bar\dg_K$ induces a biholomorphism $\bar\dg_0:M\to N$. It suffices to put $\Ng\edf\bar\dg_0^{-1}$.
\end{proof}

Before stating Lemma \ref{holomorphy-lemma} used in the above proof, we need a brief preparation: Let  $V$, $F$ be  finite dimensional Hermitian vector spaces, and $\omega\in \Omega^{1,0}(V,F)$  a holomorphic $F$-valued form of bidegree $(1,0)$ on $V$, regarded as holomorphic map $V\to \Hom(V^{1,0}_\C,F)$, where $V_\C\edf V\otimes_\R\C$.  

Let $X$ be a complex manifold, $U\subset X$ be  relatively compact and open  such that $\bar U$ is either a submanifold with smooth boundary $\bar U\setminus U$, or $U=\bar U=X$, in which case $X$ being a closed complex manifold.  We choose a pair $({\cal A},(\chi_h)_{h\in {\cal A}})$ consisting of a finite atlas of $\bar U$ and a partition of unity subordinate to  the open cover $(V_h)_{h\in {\cal A}}$ to define effective  explicit norms  on the Hölder spaces on $\bar U$ (see section \ref{H-spaces}).

Let  $s \in {\cal C}^{\kappa+1}(\bar U,V)$.  The  form $\bp s\in {\cal C}^{\kappa}(\bar U,\extp_{\;\bar U}^{0,1}\otimes V)$ can be identified with  the ${\cal C}^{\kappa}(\bar U,\Hom_\C(T_U^{0,1},V_\C^{1,0}))$-component of the complexified differential 
$$ds \otimes\id_\C: U\to \Hom_\C(T_{U}\otimes\C,V_\C)=\Hom_\C(T_{U}^{1,0}\oplus T_{U}^{0,1},V_\C^{1,0}\oplus V_\C^{0,1}).$$
Put $\omega_s \edf \omega\circ s$, and note that
$$\omega_s  \in {\cal C}^{\kappa+1}(\bar U,\Hom(V^{1,0}_\C,F)).$$
This follows from Palais' composition principle    \cite[p. 39, second paragraph after Axiom (B§5)]{Pa}  applied to the map $\omega:V\to \Hom(V^{1,0}_\C,F)$    regarded as a differentiable fiber preserving map of trivial vector bundles 
$$\bar U\times V\to \bar U\times\Hom(V^{1,0}_\C,F))$$
 over $\bar U$.

\begin{lm}\label{holomorphy-lemma}
Under the assumptions  above we have:
\begin{enumerate}
	\item 

The form  $s ^*(\omega)^{0,1}$ is given by the section $\omega_s \cdot \bp s \in{\cal C}^\kappa(\bar U,\Hom_\C(T_{U}^{0,1},F))$, where $\cdot$ stands for the fiberwise bilinear vector bundle map
$$
\Hom_\C(V^{1,0}_\C,F)\times \Hom_\C(T_U^{0,1},V_\C^{1,0})\to \Hom_\C(T_U^{0,1},F)
$$
on $U$ given fiberwise by the compositions
$$
\Hom_\C(V^{1,0}_\C,F)\times\Hom_\C(T_{U,x}^{0,1},V_\C^{1,0})\to \Hom_\C(T_{U,x}^{0,1},F),\ x\in U.
$$
\item The map 	${\cal C}^{\kappa+1}(\bar U,V)\ni s \stackrel{L}{\mapsto} s ^*(\omega)^{0,1}\in {\cal C}^\kappa(\bar U,\extp^{0,1}_{\;\bar U}\otimes F)$ is holomorphic.
\item The differential of $L$ at a  \emph{holomorphic} element $s \in {\cal C}^{\kappa+1}(\bar U,V)$ is given by
\begin{equation}\label{DL(sigma)}
dL(s )(\dot s )=\omega_s \cdot\bp\dot s .
\end{equation}
\end{enumerate}
\end{lm}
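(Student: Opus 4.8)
The plan is to prove the three assertions in sequence, reducing the whole statement to the holomorphy of the substitution (Nemytskii) operator attached to $\omega$. Assertion (1) is a pointwise identity. For $x\in U$ and $w\in T^{0,1}_{U,x}$ I would write $(s^*\omega)_x(w)=\omega_{s(x)}\big((ds_x\otimes\id_\C)(w)\big)$ and decompose $(ds_x\otimes\id_\C)(w)$ according to $V_\C=V^{1,0}_\C\oplus V^{0,1}_\C$. Since $\omega_{s(x)}$ is of bidegree $(1,0)$ it annihilates the $V^{0,1}_\C$-component, while the $V^{1,0}_\C$-component is exactly $(\bp s)_x(w)$ by the very definition of $\bp s$ recalled above; hence $(s^*\omega)^{0,1}_x(w)=\omega_{s(x)}\big((\bp s)_x(w)\big)=(\omega_s\cdot\bp s)_x(w)$. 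The regularity $\omega_s\cdot\bp s\in{\cal C}^\kappa$ then follows from $\omega_s\in{\cal C}^{\kappa+1}$ (Palais), $\bp s\in{\cal C}^\kappa$, and the fact that the fiberwise composition defines a continuous bilinear map of the corresponding Hölder spaces.

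The heart of the proof is (2). I would factor $L$ as $L(s)=B(\omega_s,\bp s)$, where $N_\omega:s\mapsto\omega_s=\omega\circ s$ is the substitution operator, $s\mapsto\bp s$ is $\C$-linear and continuous, and $B$ is the continuous $\C$-bilinear fiberwise composition of part (1). Since $\bp$ is continuous linear and $B$ is continuous bilinear, both are holomorphic for trivial reasons, so it suffices to show that $N_\omega:{\cal C}^{\kappa+1}(\bar U,V)\to{\cal C}^{\kappa+1}(\bar U,\Hom(V^{1,0}_\C,F))$ is holomorphic. By Palais' composition theorem $N_\omega$ is of class ${\cal C}^1$, with differential given fiberwise by $dN_\omega(s)(\dot s)(x)=(d\omega)_{s(x)}(\dot s(x))$. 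Because $\omega$ is holomorphic, its differential $(d\omega)_v$ is $\C$-linear at every $v$, so $dN_\omega(s)$ is a $\C$-linear operator between the complex Banach spaces in question. A ${\cal C}^1$ map between complex Banach spaces whose differential is everywhere $\C$-linear is holomorphic (the Cauchy--Riemann criterion), so $N_\omega$, and therefore $L$, is holomorphic.

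Assertion (3) then follows from (1)--(2) by the product rule applied to $L(s)=B(\omega_s,\bp s)$: one gets $dL(s)(\dot s)=B\big((d\omega)_s(\dot s),\bp s\big)+B\big(\omega_s,\bp\dot s\big)$, and for a \emph{holomorphic} $s$ the identity $\bp s=0$ annihilates the first term, leaving $dL(s)(\dot s)=B(\omega_s,\bp\dot s)=\omega_s\cdot\bp\dot s$. I expect the main obstacle to be the holomorphy of $N_\omega$: one must extract from Palais' theorem, in the Hölder category, both that $N_\omega$ takes values in ${\cal C}^{\kappa+1}$ and that it is ${\cal C}^1$ with the stated differential, and then verify that the Cauchy--Riemann criterion genuinely applies in this infinite-dimensional setting. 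Alternatively, one may argue directly that for fixed $s,\dot s$ the map $z\mapsto\omega\circ(s+z\dot s)$ is a holomorphic ${\cal C}^{\kappa+1}$-valued function of $z\in\C$, which yields Gâteaux holomorphy, and combine this with the local boundedness of $N_\omega$ to conclude holomorphy.
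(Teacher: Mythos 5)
Your proposal is correct and takes essentially the same route as the paper's proof: the pointwise decomposition of the complexified differential for part (1), the factorization of $L$ through the substitution operator $s\mapsto\omega_s$ whose holomorphy is obtained from Palais' differentiability theorem combined with the $\C$-differentiability (Cauchy--Riemann) criterion for maps between complex Banach spaces for part (2), and the Leibniz rule with the term containing $\bp s$ vanishing at a holomorphic $s$ for part (3). These are exactly the ingredients the paper cites, so nothing further is needed.
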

\begin{proof}
The first claim follows by the definition of the pull back form 	$s ^*(\omega)$ and its $(0,1)$ component. 

The second claim follows using:
\begin{itemize}
	\item The already proved claim (1) which yields a continuous  bilinear map 
$$
 {\cal C}^{\kappa+1} (\bar U,\Hom(V_\C^{1,0},F))\times {\cal C}^{\kappa+1}(\bar U,\extp^{0,1}_{\;\bar U}\otimes V^{1,0})\to {\cal C}^{\kappa}(\bar U,\extp^{0,1}_{\;\bar U}\otimes F)
$$
of Banach spaces.
	\item The holomorphy of the map   $s\mapsto \omega_s$. This follows by Palais' differentiability theorem \cite[Theorem 11.3]{Pa} and a well known holomorphy criterion \cite[Theorem 13.16 p. 107] {Mu} in terms of $\C$-differentiability  for  maps between Banach spaces.
	\item  the fact that $s\mapsto\bp s$ induces a continuous $\C$-linear operator ${\cal C}^{\kappa+1}(\bar U,V)\to {\cal C}^{\kappa}(\bar U,\extp_{\;\bar U}^{0,1}\otimes V)={\cal C}^{\kappa}(\bar U,\Hom_\C(T_{\bar U}^{0,1},V_\C^{1,0}))$. 
\end{itemize}

The third claim follows using Leibniz rule applied to the continuous  bilinear map 
$$
 {\cal C}^{\kappa+1} (\bar U,\Hom(V_\C^{1,0},F)))\times {\cal C}^{\kappa+1}(\bar U,\extp^{0,1}_{\;\bar U}\otimes V^{1,0})\to {\cal C}^{\kappa}(\bar U,\extp^{0,1}_{\;\bar U}\otimes F)
$$
of Banach spaces mentioned above and noting that the term containing $\bp s$ vanishes if $s$ is holomorphic.
\end{proof}

We can prove now the effective Newlander-Nirenberg Theorem for principal bundles in the case $n=1$ we have stated in the introduction:  

\begin{proof} (of Theorem \ref{effective-NeNi-n=1})
The main ingredient in the proof is the existence of a continuous extension operator 
$${\cal E}^\kappa_{\bar U}:{\cal C}^\kappa(\bar U,\extp^{0,1}_{\;\bar U}\otimes\g)\to {\cal C}^\kappa(\P^1,\extp^{0,1}_{\;\P^1}\otimes\g).$$
This follows from the extension Lemma  \cite[Lemma 6.37]{GiTr} noting  that the form $d\bar z$ gives a trivialization of the line bundle $\extp^{0,1}_{\;\C}$ on $\C$, so an obvious isomorphism ${\cal C}^\kappa(\bar U,\extp^{0,1}_{\;\bar U}\otimes\g)\textmap{\simeq}{\cal C}^\kappa(\bar U,\g)$.

To complete the proof it suffices to put $N_U\edf ({\cal E}^\kappa_{\bar U})^{-1}(N)$, where  $N$ is the open neighborhood of 0 in ${\cal C}^\kappa(\P^1,\extp^{0,1}_{\;\P^1}\otimes\g)$ given by Proposition \ref{M-N}, and to define  
$$\Ng_U:N_U\to  {\cal C}^{\kappa+1}(\bar U,\g)$$
by $\Ng_U(\lambda)=\Ng({\cal E}^\kappa_{\bar U}(\lambda))|_{\bar U}$.

\end{proof}

\subsection{The case \texorpdfstring{$n\geq 2$}{1}}
\label{Neumann}

Suppose now $n\edf\dim(U)\geq 2$. Note first that if $J\in {\cal J}^\kappa_P$ with $\kappa\geq 1$,  then $\fg_J\in {\cal C}^{\kappa-1}(U,\extp^{0,2}_{\;U}\otimes \Ad(P))$ and the condition $\fg_J=0$ has an obvious sense. In fact this condition has sense even for $\kappa=0$:
\begin{dt}\label{vanish-in-distribution-sense}
Let $J\in {\cal J}^0_P$ be a continuous 	bundle ACS on $P$. We will say that $\fg_J$ vanishes in distributional sense, and we will write $\fg_J=0$, if for any local section $\tau\in \Gamma(W,P)$ the $(0,2)$-form $\fg^\tau_J=\bp\alpha_J^\tau+\frac{1}{2}[\alpha_J^\tau\wedge\alpha_J^\tau]$ vanishes in distributional sense, i.e. for any compactly supported form $\varphi\in A^{n,n-2}_c(W,\Ad(P)^*)$ we have
$$
\int_U \langle \alpha_J^\tau\wedge \bp \varphi\rangle =\frac{1}{2}\int_U\big\langle[\alpha_J^\tau\wedge\alpha_J^\tau]\wedge \varphi\big\rangle.
$$
\end{dt}
If $J\in {\cal J}^\kappa_P$ with $\kappa\geq 1$, this condition is equivalent to the vanishing of $\fg_J$ as element of ${\cal C}^{\kappa-1}(U,\extp^{0,2}_{\;U}\otimes \Ad(P))\subset{\cal C}^{0}(U,\extp^{0,2}_{\;U}\otimes \Ad(P))$.
\begin{re}\label{local-J-hol-implies-fJ=0}
Let $J\in {\cal J}^0_P$	be a continuous 	bundle ACS on $P$ and let $s:U\to P$ be a  $J$-pseudo-holomorphic section of class ${\cal C}^1$. Then $\fg_J$ vanishes   in distributional sense. 
\end{re}
\begin{proof}
Let $\tau:W\to P$ be a local section of class ${\cal C}^\infty$ and let $\sigma\in {\cal C}^1(W,G)$ be such that $s|_W=\tau \sigma^{-1}$. Since $J$ is of class ${\cal C}^0$ we know that   $\alpha^\tau_J\in {\cal C}^0(W,\g)$. By Remark \ref{bijection-for-J-hol-sections} we have  
$$
\bar\lg (\sigma)=\alpha^\tau_J.
$$

Let $(\sigma_n)$ be a sequence in ${\cal C}^\infty(W,G)$ converging in the ${\cal C}^1$ topology to $\sigma$. It follows that $(\bar\lg(\sigma_n))$ converges in the ${\cal C}^0$ topology to $\bar\lg(\sigma)=\alpha^\tau_J$.   By Corollary \ref{kj-lj} we have $\bar\kg(\bar\lg(\sigma_n))=0$, so
$$
\bp(\bar\lg(\sigma_n))+\frac{1}{2}[(\bar\lg(\sigma_n))\wedge(\bar\lg(\sigma_n))]=0
$$
for any $n\in\N$. Taking the limit for $n\to\infty$ in distributional sense, we obtain
$$
\bp \alpha^\tau_J+\frac{1}{2}[\alpha^\tau_J\wedge\alpha^\tau_J]=0,
$$
as claimed.
\end{proof}

Let $X$ be a Hermitian manifold of dimension $n\geq 2$, and let $U\subset X$ be a   relatively compact strictly pseudoconvex open subset   with smooth boundary $\partial \bar U=\bar U\setminus U$.    The $L^2$-structures  used in the arguments below  are associated with the Hermitian structure of $X$, whereas the Hölder spaces ${\cal C}^{\kappa}(\bar U,\extp^{0,q}_{\;\bar U}\otimes\g)$ are endowed with the explicit norms associated with a pair $({\cal A},(\chi_h)_{h\in {\cal A}})$ consisting of a finite atlas of $\bar U$ and a partition of unity subordinate to  the open cover $(V_h)_{h\in {\cal A}}$ (see section \ref{H-spaces} in the appendix).

Under these our strict pseudo-convexity assumption, the Dolbeault cohomology groups $H^q(U,{\cal O}_U)$ can be identified with the harmonic spaces $\H^{0,q}$ \cite[Theorem 4.1 p. 314]{LM}) for $q> 0$. If we assume that $X$ is Stein, these spaces vanish for all $q>0$ \cite[Theorem 7.9 p. 180]{LM}). \\

Let 
$$
P:L^2(  U,\g)\to L^2(U,\g)\cap {\cal O}(U,\g)
$$
be the Bergman projection on the space of $L^2$ holomorphic $\g$-valued functions  on $U$. Let $\kappa\in (0,+\infty)\setminus\N$ and let $k\edf[\kappa]$ be its integer part. Put  
$$
K \edf \{f\in \ker(P)\cap {\cal C}^1(U,\g)|\ \|\bp f\|_{{\cal C}^\kappa}<\infty\}, 
$$
and we endow this vector space with the norm
$$
\| f\|_K\edf \|\bp f\|_{{\cal C}^\kappa}.
$$
\begin{lm} \label{KZ01} Under the assumptions above, suppose $H^1(U,{\cal O}_U)=0$. Let $Z^{0,1}$ be the closed subspace of ${\cal C}^{\kappa}(\bar U,\extp^{0,1}_{\;\bar U}\otimes\g)$ defined by
$$Z^{0,1}\edf \{\lambda\in {\cal C}^{\kappa}(\bar U,\extp^{0,1}_{\;\bar U}\otimes\g)|\ \bp\lambda=0\},$$
where, for $k=0$, the condition  $\bp\lambda=0$ is meant in  distributional sense on $U$.  \begin{enumerate}
\item The operator $\bp$ induces a (norm preserving) isomorphism of normed spaces $\bp_0:K\textmap{\simeq} Z^{0,1}$,
in particular $K$ is a Banach space. 
\item $K$ is contained in ${\cal C}^{\kappa}(\bar U,\g)$ and the inclusion operator is continuous.
\item We have $K\subset {\cal C}^{\kappa+1}(U,\g)$. Moreover, for any relatively compact $V\Subset U$,  there exists $C_V>0$ such that for any $u\in K$ we have the estimate:
\begin{equation}
\|u|_{V}\|_{{\cal C}^{\kappa+1}(V)}\leq C_V\|u\|_K.	
\end{equation}

\end{enumerate}

\end{lm}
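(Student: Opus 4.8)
The plan is to identify each $u\in K$ with the canonical $\bp$-solution on the strictly pseudoconvex domain $U$, and then to extract the three assertions from three standard inputs: the basic $L^2$ estimate for $\bp$, interior elliptic (Schauder) estimates, and the Hölder boundary estimates for the $\bp$-Neumann problem. Since $H^1(U,{\cal O}_U)=0$, the harmonic space $\H^{0,1}$ vanishes, so the $\bp$-Neumann operator $N$ on $(0,1)$-forms is defined and, for every $\bp$-closed $\lambda$, the function $\bp^* N\lambda$ solves $\bp\,u=\lambda$ and is orthogonal to the holomorphic $L^2$ functions, i.e. lies in $\ker(P)$. First I would record the identity $u=\bp^* N(\bp u)$ valid for every $u\in K$: both $u$ and $\bp^* N(\bp u)$ solve $\bp\,\cdot=\bp u$ and lie in $\ker(P)$, and any two solutions differ by a holomorphic function, which must vanish since it also lies in $\ker(P)$. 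This realizes every element of $K$ in the image of the canonical solution operator, which is what makes the boundary estimate applicable.

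For assertion (3) I would invoke the interior elliptic estimate for the overdetermined elliptic operator $\bp$ (equivalently, for the complex Laplacian $\bp^*\bp$ acting on functions), in the form \cite{DN}: for $V\Subset V'\Subset U$ one has $\|u\|_{{\cal C}^{\kappa+1}(V)}\leq C\big(\|\bp u\|_{{\cal C}^\kappa(V')}+\|u\|_{L^2(V')}\big)$. The point is to remove the lower-order $L^2$ term so that the right-hand side reduces to $\|u\|_K$, and this is exactly where $u\in\ker(P)$ enters: on the (strictly pseudoconvex, hence pseudoconvex) domain $U$ the operator $\bp$ has closed range on functions, so the basic estimate gives $\|u\|_{L^2(U)}\leq C\|\bp u\|_{L^2(U)}\leq C'\|\bp u\|_{{\cal C}^\kappa(\bar U)}=C'\|u\|_K$ for $u$ orthogonal to the holomorphic $L^2$ functions. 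Combining the two estimates yields (3), and in particular $u\in{\cal C}^{\kappa+1}(U)\subset{\cal C}^1(U)$.

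Assertion (2) is the genuinely hard input and, I expect, the main obstacle: it amounts to the boundedness of the canonical solution operator $\bp^* N\colon{\cal C}^\kappa(\bar U)\to{\cal C}^\kappa(\bar U)$ (which in fact gains $1/2$ of a derivative up to the boundary). Here I would simply cite the Hölder regularity theory for the $\bp$-Neumann problem on strictly pseudoconvex domains \cite{FK}, \cite{LM}, \cite{BGS}; all the analytic difficulty of the lemma is concentrated in this imported estimate. Applied to $\lambda=\bp u$ and combined with the identity $u=\bp^* N(\bp u)$ from the first paragraph, it gives $\|u\|_{{\cal C}^\kappa(\bar U)}\leq C\|\bp u\|_{{\cal C}^\kappa(\bar U)}=C\|u\|_K$, which is (2).

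Finally, (1) assembles the rest. The map $\bp_0=\bp|_K$ lands in $Z^{0,1}$, since $\bp(\bp u)=0$ and $\bp u\in{\cal C}^\kappa$, and it is norm preserving by the very definition $\|u\|_K=\|\bp u\|_{{\cal C}^\kappa}$. Injectivity is immediate: $\bp u=0$ forces $u$ holomorphic, whence $u=Pu=0$. For surjectivity, given $\lambda\in Z^{0,1}$ I would set $u=\bp^* N\lambda\in\ker(P)$; by (2) and (3) it lies in ${\cal C}^\kappa(\bar U)\cap{\cal C}^{\kappa+1}(U)\subset{\cal C}^1(U)$ with $\bp u=\lambda$, so $u\in K$ and $\bp_0 u=\lambda$. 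Thus $\bp_0$ is a surjective isometry; since $Z^{0,1}$ is a closed subspace of a Banach space it is complete, and hence so is $K$. The remaining delicate points are the distributional reading of $\bp\lambda=0$ when $\kappa\in(0,1)$ (ensuring that $\lambda\in L^2$ is genuinely $\bp$-closed so that the canonical solution applies) and verifying that the imported boundary estimate is available across the whole non-integer range of $\kappa$.
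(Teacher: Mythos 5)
Your proposal is correct and follows essentially the same route as the paper: the inverse of $\bp_0$ is realized as the canonical solution operator $\bp^*N$ (using $\H^{0,1}=0$), assertion (2) is exactly the imported H\"older boundedness of $\bp^*N$ from \cite{BGS} (via $N:{\cal C}^\kappa\to{\cal C}^{\kappa+1}$ and $\bp^*=\vartheta$ first order), and (3) comes from interior elliptic estimates \cite{DN} with the lower-order term absorbed. The only cosmetic differences are the order of the three steps and that you control the lower-order term in the interior estimate by the $L^2$ bound $\|u\|_{L^2}\leq C\|\bp u\|_{L^2}$ for $u\in\ker(P)$ (which in the manifold setting again follows from the Neumann theory you cite), whereas the paper uses the ${\cal C}^0$ bound already furnished by (2).
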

\begin{proof}
(1) It is clear that $\bp_0: K\to Z^{0,1}$ is injective and preserves the norm. For the surjectivity: Let $\lambda\in {\cal C}^{\kappa}(\bar U,\extp^{0,1}_{\;\bar U}\otimes\g)$ with $\bp\lambda=0$. 

Since the harmonic space $\H^{0,1}$ vanishes, the equation $\bp u=\lambda$ is solvable. More precisely, the corresponding canonical solution \cite[p. 209]{LM}, \cite[Corollary 3.2 p. 305]{LM}, \cite[Proposition 3.1.15]{FK}  $f=\bp^*N\lambda$  belongs to $\ker(P)$, so it belongs to $K$ because $\bp f=\lambda\in {\cal C}^{\kappa}(\bar U,\extp^{0,1}_{\;\bar U}\otimes\g)$. \vspace{2mm}\\
(2) The proof of (1) shows that the inverse of $\bp_0$ is the restriction of $\bp^*N$ to $Z^{0,1}$, so it suffices to show that $\bp^*N$ restricts to a continuous operator 
$${\cal C}^{\kappa}(\bar U,\extp^{0,1}_{\;\bar U}\otimes\g)\to {\cal C}^{\kappa}(\bar U,\g).$$
 By \cite[Theorem 1 (a)]{BGS} it follows that $N$ restricts to a continuous operator ${\cal C}^{\kappa}(\bar U,\extp^{0,1}_{\;\bar U}\otimes\g)\to {\cal C}^{\kappa+1}(\bar U,\extp^{0,1}_{\;\bar U}\otimes\g)$. Since $N$ takes values in $\mathrm{dom}(\Box)\subset \mathrm{dom}(\bp^*)$ \cite[p. 209]{LM}, on which $\bp^*$ is given by the first order differential operator $\vartheta$ \cite[p. 206]{LM}, it follows that $\bp^*N$ restricts to a continuous operator ${\cal C}^{\kappa}(\bar U,\extp^{0,1}_{\;\bar U}\otimes\g)\to {\cal C}^{\kappa}(\bar U,\g)$\footnote{The quoted theorem uses the "standard Lipschitz spaces" $\Lambda_\kappa$, where $\kappa>0$. For non-integer $\kappa$,  this space can be identified with the Hölder space ${\cal C}^{[\kappa],\kappa-[\kappa]}$ \cite[Propositions 6, 9 in section V.4 	and section VI.2.3]{St} which we denote ${\cal C}^\kappa$. Note also that in fact, by \cite[Theorem 2 (a)]{BGS},  $\bp^*N$ maps continuously $\Lambda_\kappa$ even to $\Lambda_{\kappa+\frac{1}{2}}$.
} as claimed. 
 \vspace{2mm}\\
(3) The first claim of (3) follows using standard regularity property of the first order elliptic operator $\bp+\bp^*:\bigoplus_{0\leq 2q\leq n} A^{0,2q}(U)\to \bigoplus_{1\leq 2q+1\leq n} A^{0,2q+1}(U)$. The second claim follows using interior estimates \cite[Theorem 4, p. 529]{DN} for the same operator taking into account (2) which gives an estimate of $\|f\|_{{\cal C}^0}$ in terms of $\|f\|_K=\|\bp f\|_{{\cal C}^\kappa}$.
\end{proof}
%
%
\begin{lm}\label{inclusion1}
The formula  
$\bar\dg(u)\edf \bar\lg(\exp(u))$
defines  a holomorphic  map 
$$\bar\dg:{\cal C}^{\kappa+1} (\bar U,\g )\to  {\cal C}^{\kappa}(\bar U,\extp^{0,1}_{\;\bar U}\otimes\g)$$
 whose image is contained in the closed subset
  $$
W\edf \left\{\lambda\in {\cal C}^{\kappa}(\bar U,\extp^{0,1}_{\;\bar U}\otimes\g)|\   \bp \lambda+\frac{1}{2}[\lambda\wedge\lambda]=0\right\}\subset {\cal C}^{\kappa}(\bar U,\extp^{0,1}_{\;\bar U}\otimes\g),
$$
 and whose  differential at 0 is $d(\bar \dg)(0)(\dot s)=\bp\dot s$.
\end{lm}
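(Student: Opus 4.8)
The plan is to recognize $\bar\dg$ as a special case of the map $L$ from Lemma~\ref{holomorphy-lemma}, from which the holomorphy and the differential at $0$ follow immediately, and then to establish the inclusion $\im\bar\dg\subset W$ by an approximation argument patterned on the proof of Remark~\ref{local-J-hol-implies-fJ=0}.

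First I would apply Lemma~\ref{holomorphy-lemma} with $V=\g$, $F=\g^{1,0}\simeq\g$, and $\omega\edf\exp^*(\theta^{1,0})$. Here $\omega$ is a \emph{holomorphic} $F$-valued $(1,0)$-form on the vector space $\g$, because $\theta^{1,0}$ is holomorphic on $G$ and the exponential $\exp:\g\to G$ of a complex Lie group is a holomorphic map. Since $\bar\lg(\sigma)=\sigma^*(\theta^{1,0})^{0,1}$ and pullback is contravariant, for every $u$ we have $\bar\dg(u)=(\exp\circ u)^*(\theta^{1,0})^{0,1}=(u^*\omega)^{0,1}=L(u)$, so $\bar\dg=L$. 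The holomorphy of $\bar\dg:{\cal C}^{\kappa+1}(\bar U,\g)\to{\cal C}^{\kappa}(\bar U,\extp^{0,1}_{\;\bar U}\otimes\g)$ is then exactly Lemma~\ref{holomorphy-lemma}(2). For the differential, I note that the constant map $0$ is holomorphic, so Lemma~\ref{holomorphy-lemma}(3) gives $d\bar\dg(0)(\dot s)=\omega_0\cdot\bp\dot s$; a one-line computation $\omega_0=\theta^{1,0}_e\circ(d\exp)_0=\id_\g$ (using $(d\exp)_0=\id_\g$ and $\theta_e=\id$) then yields $d\bar\dg(0)(\dot s)=\bp\dot s$.

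The remaining, and only nontrivial, assertion is that $\bar\dg(u)\in W$ for every $u\in{\cal C}^{\kappa+1}(\bar U,\g)$. For \emph{smooth} $u$ the map $\exp(u):U\to G$ is smooth, so Corollary~\ref{kj-lj}, applied to the trivial bundle with the product structure $J_0$ (for which $\fg_{J_0}=0$ and hence $\bar\kg\circ\bar\lg=0$), shows that $\bar\dg(u)=\bar\lg(\exp u)$ satisfies the Maurer--Cartan equation $\bp\bar\dg(u)+\frac12[\bar\dg(u)\wedge\bar\dg(u)]=0$. To reach arbitrary $u$ I would work locally: on a small ball in $U$ mollify to obtain smooth $u_n\to u$ in ${\cal C}^1$. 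By Lemma~\ref{holomorphy-lemma}(1), $\bar\dg(u_n)=\omega_{u_n}\cdot\bp u_n$, and since $\omega_{u_n}\to\omega_u$ and $\bp u_n\to\bp u$ uniformly, we get $\bar\dg(u_n)\to\bar\dg(u)$ in ${\cal C}^0$. Passing to the distributional limit in the equation for $u_n$ --- the $\bp$-term converges because uniform convergence implies distributional convergence, and the bracket term converges uniformly by bilinearity --- produces $\bp\bar\dg(u)+\frac12[\bar\dg(u)\wedge\bar\dg(u)]=0$ in distributional sense on that ball. Since $W$ is defined by a local condition, this suffices.

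I expect the last step to be the main obstacle, precisely in the low-regularity range $\kappa\in(0,1)$: smooth functions are not dense in ${\cal C}^{\kappa+1}$ for the Hölder norm, so one cannot approximate in the natural topology of the source and must instead approximate only in ${\cal C}^1$ and interpret the quadratic equation distributionally, relying on the fact that the defining condition of $W$ is local and stable under uniform limits of the nonlinear term.
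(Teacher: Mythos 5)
Your proposal is correct and takes essentially the same route as the paper: the paper's proof likewise applies Lemma \ref{holomorphy-lemma} with $V=\g$, $F=\g^{1,0}\simeq\g$, $\omega=\exp^*(\theta^{1,0})$, noting $\bar\dg(u)=(\exp\circ u)^*(\theta^{1,0})^{0,1}=u^*(\omega)^{0,1}$, and obtains the differential at $0$ from $\omega_0=\id_\g$. Your mollification argument for $\im(\bar\dg)\subset W$ (smooth approximation in ${\cal C}^1$, Corollary \ref{kj-lj} with $\fg_{J_0}=0$, distributional limit) is exactly the content of the paper's Remark \ref{local-J-hol-implies-fJ=0}, which the paper's terse proof leaves implicit, so you have merely written out in full what the paper delegates to an earlier result.
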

In the case $k=0$ (i.e. $0<\kappa<1$) the condition $\bp\lambda+\frac{1}{2}[\lambda\wedge\lambda]=0$ in the definition of $W$ is meant in  distributional sense on $U$.
\begin{proof}

We use Lemma \ref{holomorphy-lemma} taking $V=\g$, $F=\g^{1,0}\simeq\g$, $\omega=\exp^*(\theta^{1,0})$  regarded as holomorphic 1-form on $\g$. Noting that $\bar\dg(u)=(\exp\circ u)^*(\theta^{1,0})^{0,1}=u^*(\omega)^{0,1}$, we obtain $\bar\dg(u)\in {\cal C}^{\kappa}(\bar U,\extp^{0,1}_{\;\bar U}\otimes\g)$ by Lemma \ref{holomorphy-lemma} (1). The other claims follow by   Lemma \ref{holomorphy-lemma} (2), (3).
\end{proof}

Let ${\cal K}:{\cal C}^{\kappa}(\bar U,\extp^{0,1}_{\;\bar U}\otimes\g)\to {\cal C}^{\kappa}(\bar U,\extp^{0,1}_{\;\bar U}\otimes\g)$ be the  map defined by
$$
{\cal K}(\lambda)=\lambda+\frac{1}{2}(\bp^*N)[ \lambda\wedge\lambda].
$$
This map is well defined and holomorphic. Indeed, using the mentioned above regularity property of the operator $N$ and a standard multiplicative property of Hölder spaces, it follows that the second term of ${\cal K}$  is a continuous quadratic (2-homogeneous) map ${\cal C}^{\kappa}(\bar U,\extp^{0,1}_{\;\bar U}\otimes\g)\to {\cal C}^{\kappa}(\bar U,\extp^{0,1}_{\;\bar U}\otimes\g)$ \cite[section I.2]{Mu}. Therefore ${\cal K}$ is even polynomial in the sense of \cite[Definition I.2.8]{Mu}.
\begin{lm}\label{inclusion2}
Suppose $H^q(U,{\cal O}_U)=0$ for $q\in\{1,2\}$. Then ${\cal K}(W)\subset Z^{0,1}$.	
\end{lm}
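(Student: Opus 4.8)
The plan is to verify the defining condition of $Z^{0,1}$ directly, i.e.\ to show that $\bp\big({\cal K}(\lambda)\big)=0$ for every $\lambda\in W$. Writing $\phi\edf[\lambda\wedge\lambda]$ — a $(0,2)$-form of class ${\cal C}^\kappa$, since ${\cal C}^\kappa$ is a multiplicative algebra — and applying $\bp$ to the defining formula of ${\cal K}$, one gets
\begin{equation*}
\bp\big({\cal K}(\lambda)\big)=\bp\lambda+\tfrac12\,\bp\bp^*N\,\phi .
\end{equation*}
Since $\lambda\in W$ means precisely $\bp\lambda=-\tfrac12\phi$, the entire statement reduces to the single identity $\bp\bp^*N\,\phi=\phi$.

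First I would check that $\phi$ is $\bp$-closed. Using the graded Leibniz rule for the bracket of $\g$-valued forms one has $\bp[\lambda\wedge\lambda]=2\,[\bp\lambda\wedge\lambda]$; substituting the structure equation $\bp\lambda=-\tfrac12[\lambda\wedge\lambda]$ valid on $W$ gives $\bp\phi=-[[\lambda\wedge\lambda]\wedge\lambda]$, and the right-hand side vanishes identically by the graded Jacobi identity for $\g$-valued $(0,1)$-forms. (In fact only the scheme matters: $\bp[\lambda\wedge\lambda]$ is a multiple of $[\bp\lambda\wedge\lambda]$, and after the substitution it becomes a multiple of $[[\lambda\wedge\lambda]\wedge\lambda]=0$, so the conclusion $\bp\phi=0$ is independent of the sign conventions.)

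Next I would feed the $\bp$-closed $(0,2)$-form $\phi$ into the $\bp$-Neumann machinery. Since $H^2(U,{\cal O}_U)=0$, the harmonic space $\H^{0,2}$ vanishes, and the Hodge decomposition of $\phi$ reads $\phi=\bp\bp^*N\phi+\bp^*\bp N\phi$; using $\bp N=N\bp$ together with $\bp\phi=0$ the second summand vanishes, so $\bp\bp^*N\phi=\phi$. This is exactly the assertion that $v\edf\bp^*N\phi$ is the canonical solution of $\bp v=\phi$, the statement already used (for $(0,1)$-forms) in the proof of Lemma \ref{KZ01}(1). Combining with the first paragraph, $\bp\big({\cal K}(\lambda)\big)=\bp\lambda+\tfrac12\phi=0$, i.e.\ ${\cal K}(\lambda)\in Z^{0,1}$.

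The step requiring the most care is the regularity bookkeeping in the borderline range $0<\kappa<1$ (the case $k=0$), where $\bp\lambda$ is a priori only a distribution. The point that dissolves the difficulty is that on $W$ the structure equation forces $\bp\lambda=-\tfrac12[\lambda\wedge\lambda]$ to be an honest ${\cal C}^\kappa$ form, so $\lambda$ already has Hölder-continuous $\bp$-derivative; both the Leibniz rule and the resulting identity $\bp\phi=0$ are then justified in the distributional sense by approximating $\lambda$ with smooth forms in ${\cal C}^\kappa$. The only other thing to confirm is that the canonical-solution identity $\bp\bp^*N\phi=\phi$ holds on $(0,2)$-forms verbatim as on $(0,1)$-forms; this is the same $L^2$ Hodge-theoretic fact invoked in Lemma \ref{KZ01}, applicable because $\phi\in{\cal C}^\kappa\subset L^2(U)$ and $\H^{0,2}=0$.
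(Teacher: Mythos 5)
Your proof is correct, but it runs on a genuinely different mechanism than the paper's in the key step. You first establish $\bp[\lambda\wedge\lambda]=0$ — via the graded Leibniz rule, the substitution $\bp\lambda=-\frac{1}{2}[\lambda\wedge\lambda]$, the graded Jacobi identity, and (correctly, and necessarily) a mollification argument in the range $0<\kappa<1$, which works precisely because the structure equation makes $\bp\lambda$ itself a ${\cal C}^\kappa$ form — and then invoke the canonical-solution identity $\bp\bp^*N\phi=\phi$ for $\bp$-closed $\phi$ with $\H^{0,2}=0$, using the commutation $\bp N=N\bp$ on $\mathrm{dom}(\bp)$ (note this commutation needs the Neumann operator in degree $(0,3)$, which exists on strictly pseudoconvex domains for $n\geq 3$ and is vacuous for $n=2$; alternatively you could quote the canonical-solution statement for $(0,2)$-forms directly, the same fact the paper uses for $(0,1)$-forms in Lemma \ref{KZ01}). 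The paper deliberately avoids ever differentiating $[\lambda\wedge\lambda]$: it writes $\bp\bp^*N=\Box N-\bp^*\bp N$, uses $\Box N=\id$ (from $\H^{0,2}=0$) to cancel $\bp\lambda$, and is left with $\bp{\cal K}(\lambda)=-\frac{1}{2}\bp^*\bp N[\lambda\wedge\lambda]$; this expression lies in ${\cal R}(\bp^*)$ by construction and in ${\cal R}(\bp)$ because it is the distributional $\bp$ of an $L^2$ form, so the orthogonality ${\cal R}(\bp)\bot{\cal R}(\bp^*)$ forces it to vanish. What each route buys: the paper's argument is more economical at low regularity, since it never needs the distributional Leibniz rule or the Jacobi identity, only soft $L^2$ Hodge theory; yours requires the extra approximation step but yields the sharper structural conclusion $\bp\bp^*N[\lambda\wedge\lambda]=[\lambda\wedge\lambda]$, i.e.\ it identifies $\frac{1}{2}\bp^*N[\lambda\wedge\lambda]$ as the canonical solution of $\bp u=\frac{1}{2}[\lambda\wedge\lambda]$, which makes the definition of ${\cal K}$ transparent.
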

\begin{proof}
Let $\lambda\in W$.	We have in distributional sense
$$
\bp{\cal K}(\lambda)=\bp \lambda +\frac{1}{2}\bp \bp^*N[\lambda\wedge\lambda]=-\frac{1}{2}[\lambda\wedge\lambda]+\frac{1}{2}\Box N [\lambda\wedge\lambda]-\frac{1}{2}\bp^*\bp N[\lambda\wedge\lambda]. 
$$
Since   the harmonic space $\H^{0,2}$ vanishes, we have $\Box N=\id$ on $L^2$ forms of type (0,2), so we get in distributional sense:
\begin{equation}\label{range}
\bp {\cal K}(\lambda)=-\frac{1}{2}\bp^*\bp N[\lambda\wedge\lambda].	
\end{equation}
The range of $\bp N$ is contained in the domain of $\bp^*$, because $N$ takes values in 
\begin{align*}
\mathrm{dom}(\Box)=\big\{f\in L^2(U, \extp^{0,2}_{\;U}\otimes\g)|\ &f\in \mathrm{dom}(\bp)\cap\mathrm{dom}(\bp^*),\\
& \bp f\in \mathrm{dom}(\bp^*),\ \bp^* f\in \mathrm{dom}(\bp)\big\}	
\end{align*}
(see \cite[p. 201]{LM}). Therefore the right hand term of (\ref{range})  belongs to $L^2$, more precisely it belongs to the range ${\cal R}(\bp^*)$ of $\bp^*$ as closed and densely defined operator on $L^2$  (see \cite[p. 185]{LM}).  But then (\ref{range}) shows that the distribution $\bp {\cal K}(\lambda)$ belongs to $L^2$, more precisely it belongs to the range ${\cal R}(\bp)$ of $\bp$ as closed and densely defined operator on $L^2$  (see \cite[Theorem 2.6 p. 187]{LM}). Since  ${\cal R}(\bp)\bot{\cal R}(\bp^*)$ (see \cite[Theorem 5.14, or Theorem 6.2]{LM}), we get $\bp {\cal K}(\lambda)=0$.
\end{proof}

For the differential $d{\cal K}(0)$ of ${\cal K}$ at 0  we have  $d{\cal K}(0)=\id$, so $d({\cal K}\circ \bar\dg)(0)=d\bar\dg(0)= \bp$ by Lemma \ref{inclusion1}. On the other hand, by Lemmas \ref{inclusion1}, \ref{inclusion2}, ${\cal K}\circ \bar\dg$ takes values in $Z^{0,1}$. Therefore
\begin{re}\label{composition}
The induced map $\cg\edf{\cal K}\circ\bar\dg|_K:K\to Z^{0,1}$ is also  holomorphic, and its differential at 0 is $d\cg(0)=\bp_0$, which is a (norm preserving) isomorphism of normed spaces by Lemma \ref{KZ01}. 	
\end{re}

\begin{pr}
\label{the-map-Ng}	
 Suppose $H^q(U,{\cal O}_U)=0$ for $q\in\{1,2\}$.
 There exists an open neighborhood $N_U$ of $0$ in $W$ and a continuous map $\Ng:N_U\to K$ such that $\Ng(0)=0$, $\bar\dg\circ \Ng=\id_{N_U}$ and
 $
 \lim_{\lambda\to 0}\frac{\|\Ng(\lambda)\|_{K}}{\|\lambda\|_{{\cal C}^{\kappa}}}=1$.
\end{pr}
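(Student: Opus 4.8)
The plan is to invert $\bar\dg$ over a neighborhood of $0$ in $W$ by means of the holomorphic inverse function theorem, exploiting the factorization $\cg={\cal K}\circ\bar\dg|_K$ recorded in Remark \ref{composition}. Since $\cg:K\to Z^{0,1}$ is holomorphic and its differential $d\cg(0)=\bp_0$ is an isomorphism of Banach spaces, the inverse function theorem produces open neighborhoods $M\subset K$ and $O\subset Z^{0,1}$ of the respective origins such that $\cg|_M:M\to O$ is a biholomorphism; write $\psi\edf(\cg|_M)^{-1}:O\to M$ for its inverse, a holomorphic map with $\psi(0)=0$ and $d\psi(0)=\bp_0^{-1}$. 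I would then set $\Ng\edf\psi\circ{\cal K}$ on a neighborhood $N_U$ of $0$ in $W$ to be fixed below; note $\Ng(0)=\psi({\cal K}(0))=\psi(0)=0$, and $\Ng$ is continuous as a composition of continuous maps.

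The delicate point — and the main obstacle — is to verify $\bar\dg\circ\Ng=\id_{N_U}$, which requires turning the identity ${\cal K}(\bar\dg(\Ng(\lambda)))={\cal K}(\lambda)$ into the equality $\bar\dg(\Ng(\lambda))=\lambda$ via injectivity of ${\cal K}$, and this forces careful bookkeeping of neighborhoods. Since $d{\cal K}(0)=\id$, the inverse function theorem gives an open neighborhood $\Omega$ of $0$ in ${\cal C}^\kappa(\bar U,\extp^{0,1}_{\;\bar U}\otimes\g)$ on which ${\cal K}$ is injective. I would shrink $N_U\subset W$ around $0$ so that simultaneously $N_U\subset\Omega$, that ${\cal K}(N_U)\subset O$ (using ${\cal K}(W)\subset Z^{0,1}$ from Lemma \ref{inclusion2} and continuity of ${\cal K}$, so that $\Ng=\psi\circ{\cal K}$ is defined on $N_U$ with values in $M\subset K$), and that $\bar\dg(\Ng(N_U))\subset\Omega$; the last condition is attainable by continuity since $\lambda\mapsto\bar\dg(\Ng(\lambda))$ sends $0$ to $\bar\dg(0)=0\in\Omega$. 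For $\lambda\in N_U$ one then computes, using $\cg={\cal K}\circ\bar\dg|_K$ and $\cg\circ\psi=\id_O$,
$$
{\cal K}\big(\bar\dg(\Ng(\lambda))\big)=\cg(\Ng(\lambda))=\cg\big(\psi({\cal K}(\lambda))\big)={\cal K}(\lambda).
$$
Since $\bar\dg$ takes values in $W$ (Lemma \ref{inclusion1}), the element $\bar\dg(\Ng(\lambda))$ lies in $W$, and by the third condition it lies in $\Omega$; as $\lambda\in N_U\subset\Omega$ as well, both arguments sit in the injectivity domain of ${\cal K}$, whence $\bar\dg(\Ng(\lambda))=\lambda$.

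Finally, for the normalization of the limit I would expand $\Ng$ to first order at $0$. Since ${\cal K}(\lambda)=\lambda+\frac12(\bp^*N)[\lambda\wedge\lambda]$ with the second term $2$-homogeneous, one has $\|{\cal K}(\lambda)\|_{{\cal C}^\kappa}=\|\lambda\|_{{\cal C}^\kappa}+O(\|\lambda\|_{{\cal C}^\kappa}^2)$, and differentiability of $\psi$ at $0$ (with $d\psi(0)=\bp_0^{-1}$) gives
$$
\Ng(\lambda)=\psi({\cal K}(\lambda))=\bp_0^{-1}\big({\cal K}(\lambda)\big)+o\big(\|{\cal K}(\lambda)\|_{{\cal C}^\kappa}\big).
$$
Because $\bp_0:K\to Z^{0,1}$ is norm preserving (Lemma \ref{KZ01}~(1)), so is $\bp_0^{-1}$, whence $\|\bp_0^{-1}({\cal K}(\lambda))\|_K=\|{\cal K}(\lambda)\|_{{\cal C}^\kappa}=\|\lambda\|_{{\cal C}^\kappa}+O(\|\lambda\|_{{\cal C}^\kappa}^2)$. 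Combining this with the remainder estimate and the triangle inequality yields $\|\Ng(\lambda)\|_K=\|\lambda\|_{{\cal C}^\kappa}+o(\|\lambda\|_{{\cal C}^\kappa})$, so that $\|\Ng(\lambda)\|_K/\|\lambda\|_{{\cal C}^\kappa}\to 1$ as $\lambda\to0$, as required.
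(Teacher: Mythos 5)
Your proposal is correct and follows essentially the same route as the paper: both invert $\cg={\cal K}\circ\bar\dg|_K$ via the inverse function theorem, use local injectivity of ${\cal K}$ (guaranteed by $d{\cal K}(0)=\id$) to upgrade ${\cal K}(\bar\dg(\Ng(\lambda)))={\cal K}(\lambda)$ to $\bar\dg(\Ng(\lambda))=\lambda$, and derive the limit from the norm preservation of $\bp_0$ together with $d{\cal K}(0)=\id$. The only cosmetic difference is in the neighborhood bookkeeping — you shrink $N_U$ so that $\bar\dg(\Ng(N_U))$ lands in the injectivity domain, while the paper shrinks $M$ so that $\bar\dg(M)\subset B$, and the paper additionally observes that $B\cap{\cal K}^{-1}(R)\subset W$ (so its $N_U$ is open in the ambient space), which your weaker openness-in-$W$ claim does not need.
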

\begin{proof}

By the local inverse theorem applied to ${\cal K}$ and $\cg$,  there exists:
\begin{itemize}
\item   an open neighborhood  $B$ of 0 in ${\cal C}^{\kappa}(\bar U,\extp^{0,1}_{\;\bar U}\otimes\g)$ on which ${\cal K}$ is injective,
  
\item  an open neighborhood $M\subset K$ of 0 in $K$ on which $\cg$ is injective, and such that  $R\edf \cg(M)$ is open in $Z^{0,1}$ and  $\cg$ induces a biholomorphism   $\cg_0:M\to R$.	 
\end{itemize}

We choose $M$  sufficiently small such that $\bar\dg(M)\subset B$. This is possible, because $\bar\dg$ is holomorphic, hence continuous.

The intersection ${\cal K}^{-1}(R)\cap W$ is open in $W$ because it coincides with the pre-image of $R$ via the restriction ${\cal K}|_W:W\to Z^{0,1}$ (see Lemma \ref{inclusion2}). It follows that $N_U\edf B\cap {\cal K}^{-1}(R)\cap W$ is an open neighborhood of $0$ in $W$. 

We claim that in fact $B\cap {\cal K}^{-1}(R)\subset W$, i.e. that $N_U=B\cap {\cal K}^{-1}(R)$. Indeed, for any $\lambda\in B\cap {\cal K}^{-1}(R)$ we have ${\cal K}(\lambda)\in R$, so $\cg_0^{-1}({\cal K}(\lambda))\in M\subset K$, so
\begin{equation}\label{{cal K}={cal K}}
{\cal K}(\bar\dg(\cg_0^{-1}({\cal K}(\lambda))))=({\cal K}\circ\bar\dg)|_K(\cg_0^{-1}({\cal K}(\lambda)))=\cg(\cg_0^{-1}({\cal K}(\lambda)))={\cal K}(\lambda).	
\end{equation}
But both $\lambda$ and $\bar\dg(\cg_0^{-1}({\cal K}(\lambda)))$ belong to $B$. The former because we have chosen $\lambda\in B\cap {\cal K}^{-1}(R)$, the latter because $\cg_0^{-1}({\cal K}(\lambda))\in M$ and we have chosen $M$ such that $\bar\dg(M)\subset B$. Therefore, since ${\cal K}$ is injective on $B$, formula (\ref{{cal K}={cal K}}) implies
\begin{equation}\label{lambda=bar-dg}
\lambda=\bar\dg(\cg_0^{-1}({\cal K}(\lambda)))	,
\end{equation}
in particular $\lambda\in W$ by Lemma \ref{inclusion1}, and the claim is proved.\vspace{1mm}

Put $\Ng\edf \cg_0^{-1}\circ {\cal K}|_{N_U}:N_U\to K$. Formula (\ref{lambda=bar-dg}) gives
\begin{equation}
\bar\dg \circ \Ng =\id_{N_U}.	
\end{equation}
On the other hand
$$
 \lim_{\lambda\to 0}\frac{\|\Ng(\lambda)\|_{K}}{\|\lambda\|_{{\cal C}^{\kappa}}}= \lim_{\lambda\to 0}\frac{\|\cg_0^{-1}({\cal K}(\lambda))\|_{K}}{\|{\cal K}(\lambda)\|_{{\cal C}^{\kappa}}}\frac{\|{\cal K}(\lambda)\|_{{\cal C}^{\kappa}}}{\|\lambda\|_{{\cal C}^{\kappa}}}=1,
 $$
 because the differentials $d(\cg_0^{-1})(0)$, $d({\cal K})(0)$ are isomorphisms of normed spaces.
\end{proof}

Theorem \ref{effective-NeNi} stated in the introduction follows from Proposition \ref{the-map-Ng}  taking into account Lemma \ref{KZ01} (3).  \vspace{2mm}

Now we can prove our Hölder version of the Newlander-Nirenberg theorem:  

\begin{proof} (of Theorem \ref{NNGCkappa}) Suppose first $\kappa\in(0,+\infty)\setminus\N$.\\
 (2)$\Rightarrow$(1)  follows from Remark \ref{local-J-hol-implies-fJ=0}.
 \vspace{2mm}\\
 (1)$\Rightarrow$(2): Let $J\in {\cal J}^\kappa_P$ such that, in the case $n\geq 2$, we have  $\fg_J=0$. The problem is local, so we can assume that
\begin{itemize}
\item $U$ is an open neighborhood of 0  in   $\C^n$ and $x=0$.
\item $P$ is the trivial $G$-bundle $U\times G$ on $U$.
\end{itemize}

Let $\alpha\in {\cal C}^\kappa(U,\extp^{0,1}_{\;U}\otimes\g)$ be the form which corresponds to $J$ via the identifications explained in Remark \ref{bijection-J-alpha_J} (1), (2). Note first that, by Proposition \ref{fg-tau-J-prop}, the assumption $\fg_J=0$ (in the case $n\geq 2$) becomes
\begin{equation}\label{alpha-in-W}
\bp\alpha+\frac{1}{2}[\alpha\wedge\alpha]=0
\end{equation}
 (in distributional sense for $\kappa\in(0,1)$). Let $r>0$ be sufficiently small such that $\bar B_r\subset U$, where $B_r$ stands for the radius $r$ ball  around 0.  Taking into account Remark \ref{bijection-for-J-hol-sections} (generalized in the obvious way for bundle ACS of class ${\cal C}^\kappa$) it suffices to prove: 
\begin{cl}
For sufficiently small $\varepsilon\in(0,1]$ the equation 
$$
\bar\lg(\exp(u))=\alpha|_{B_{\varepsilon r}}
$$
has a solution $u\in {\cal C}^{\kappa+1}(B_{\varepsilon r},\g)$.	
\end{cl}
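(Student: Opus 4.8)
The plan is to move the equation to a ball of fixed radius, where the effective results of the previous subsections apply with a \emph{fixed} neighbourhood of the origin, and to exploit the holomorphic dilations of $\C^n$. Fix $r$ with $\bar B_r\subset U$. The ball $B_r$ is a relatively compact, strictly pseudoconvex domain with smooth boundary in $X=\C^n$, and it is Stein, so $H^q(B_r,{\cal O}_{B_r})=0$ for all $q\geq 1$; in particular the hypotheses of Theorem~\ref{effective-NeNi} (when $n\geq 2$), respectively of Theorem~\ref{effective-NeNi-n=1} (when $n=1$), are met on $U=B_r$. This provides an open neighbourhood $N_{B_r}$ of $0$ and a solution map $\Ng_{B_r}$ for the equation $\bar\lg(\exp(v))=\mu$ on $B_r$. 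The point to keep in mind is that simply restricting $\alpha$ to $B_{\varepsilon r}$ does \emph{not} make its ${\cal C}^\kappa$-norm small, since $\sup_{B_{\varepsilon r}}|\alpha|\to|\alpha(0)|$ as $\varepsilon\to0$; this is exactly what the dilation will repair.

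First I would introduce the dilation $\Phi_\varepsilon\colon B_r\to B_{\varepsilon r}$, $\Phi_\varepsilon(z)=\varepsilon z$, and set $\lambda_\varepsilon\edf\Phi_\varepsilon^*(\alpha|_{\bar B_{\varepsilon r}})\in{\cal C}^\kappa(\bar B_r,\extp^{0,1}_{\;\bar B_r}\otimes\g)$. Writing $\alpha=\sum_j\alpha_j\,d\bar z_j$ on $\bar B_r$, one has $\lambda_\varepsilon=\varepsilon\sum_j(\alpha_j\circ\Phi_\varepsilon)\,d\bar z_j$, since $\Phi_\varepsilon^*(d\bar z_j)=\varepsilon\,d\bar z_j$. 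Each derivative of $\alpha_j\circ\Phi_\varepsilon$ carries an extra factor $\varepsilon$ and each Hölder difference quotient of order $\kappa-[\kappa]$ a factor $\varepsilon^{\kappa-[\kappa]}$; together with the global prefactor $\varepsilon$ this yields $\|\lambda_\varepsilon\|_{{\cal C}^\kappa(\bar B_r)}=O(\varepsilon)\to0$ as $\varepsilon\to0^+$. Hence $\lambda_\varepsilon\in N_{B_r}$ once $\varepsilon$ is small enough. Moreover, because $\Phi_\varepsilon$ is holomorphic it commutes with $\bp$ and with the pointwise bracket, so $\bp\lambda_\varepsilon+\frac12[\lambda_\varepsilon\wedge\lambda_\varepsilon]=\Phi_\varepsilon^*\big(\bp\alpha+\frac12[\alpha\wedge\alpha]\big)=0$ (in distributional sense when $\kappa\in(0,1)$, using that pullback by a diffeomorphism respects the distributional identity~(\ref{alpha-in-W})). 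Thus $\lambda_\varepsilon$ lies in $W$, which is what is needed for the application of Theorem~\ref{effective-NeNi} when $n\geq 2$.

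Next I would solve on the fixed ball and transport the solution back. Put $v\edf\Ng_{B_r}(\lambda_\varepsilon)\in{\cal C}^{\kappa+1}(B_r,\g)$, so that $\bar\lg(\exp(v))=\lambda_\varepsilon$ on $B_r$, and define $u\edf v\circ\Phi_\varepsilon^{-1}$. Since $\Phi_\varepsilon^{-1}$ is a smooth linear isomorphism, $u\in{\cal C}^{\kappa+1}(B_{\varepsilon r},\g)$. The operator $\bar\lg$ is natural under holomorphic reparametrization: for $\sigma\colon B_{\varepsilon r}\to G$ one has $\bar\lg(\sigma\circ\Phi_\varepsilon)=\Phi_\varepsilon^*(\bar\lg(\sigma))$, because $\bar\lg(\sigma)=\sigma^*(\theta^{1,0})^{0,1}$ and pullback by the holomorphic map $\Phi_\varepsilon$ commutes with the extraction of the $(0,1)$-part. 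Applying this to $\sigma=\exp(u)$, for which $\sigma\circ\Phi_\varepsilon=\exp(v)$, gives $\Phi_\varepsilon^*(\bar\lg(\exp(u)))=\bar\lg(\exp(v))=\lambda_\varepsilon=\Phi_\varepsilon^*\alpha$; since $\Phi_\varepsilon^*$ is injective on forms we conclude $\bar\lg(\exp(u))=\alpha$ on $B_{\varepsilon r}$, which is exactly the assertion of the Claim.

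The routine parts are the dilation estimate and the naturality of $\bar\lg$; the genuinely essential observation, and the main obstacle one must get right, is that the effective theorems only supply a neighbourhood of $0$ on a \emph{fixed} domain, and that this neighbourhood cannot be reached by restriction alone but must be reached by the holomorphic rescaling $\Phi_\varepsilon$, for which one has to check simultaneously that the integrability constraint $W$ and the nonlinear equation $\bar\lg(\exp(\cdot))=\cdot$ are both preserved.
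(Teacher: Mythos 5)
Your proposal is correct and follows essentially the same route as the paper: the paper also rescales by the dilation $h_\varepsilon(z)=\varepsilon z$, derives the estimate $\|h_\varepsilon^*(\alpha|_{\bar B_{\varepsilon r}})\|_{{\cal C}^\kappa}\leq \varepsilon\|\alpha|_{\bar B_r}\|_{{\cal C}^\kappa}$ to land in the fixed neighbourhood $N_{B_r}$ supplied by Theorem \ref{effective-NeNi} (resp.\ Theorem \ref{effective-NeNi-n=1} for $n=1$), and transports the solution back via $u_\varepsilon\circ h_\varepsilon^{-1}$. Your explicit verification of the naturality of $\bar\lg$ under holomorphic pullback and of the preservation of the constraint $W$ in distributional sense only spells out steps the paper leaves implicit.
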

To prove this claim note that, since $U$ is an open subset of $\C^n$, $\alpha$ is given by a map 
$$\tilde\alpha:U\to  \Hom_\R(\C^n,\g)$$
 of class ${\cal C}^\kappa$ taking values in the space of anti-linear maps $\C^n\to\g$. Moreover, we have $\|\alpha|_{\bar B_{\varepsilon r}} \|_{{\cal C}^\kappa}=\|\tilde \alpha|_{\bar B_{\varepsilon r}} \|_{{\cal C}^\kappa}$  (see section \ref{H-spaces}). Let $h_\varepsilon:\bar B_r\to \bar B_{\varepsilon r}$ be the contraction $h_\varepsilon(z)=\varepsilon z$. Put
 $$
 \alpha_\varepsilon\edf h_\varepsilon^*(\alpha|_{\bar B_{\varepsilon r}})\in {\cal C}^\kappa(\bar B_r,\extp^{0,1}_{\;\bar B_r}\otimes\g).
 $$ 
 The corresponding map $\tilde\alpha_\varepsilon:\bar B_r\to \Hom_\R(\C^n,\g)$ is $ \tilde\alpha_\varepsilon=\varepsilon\tilde\alpha\circ h_\varepsilon$.
 This shows that, denoting as usual $k\edf[\kappa]$, $\nu\edf\kappa-k$,  
 \begin{itemize}
 \item for any multi-index $\beta\in \N^{2n}$ with $|\beta|\leq k$ and for any $x\in \bar B_r$ we have
 $$
 \p^\beta \tilde \alpha_\varepsilon(x)=\varepsilon^{|\beta|+1} (\p^\beta\tilde\alpha)(\varepsilon x).
 $$	
 \item for any multi-index $\beta\in \N^{2n}$ with $|\beta|= k$ and for any $x$, $y\in \bar B_r$ we have
 $$
  \frac{\|\p^\beta \tilde  \alpha_\varepsilon(x)-\p^\beta \tilde \alpha_\varepsilon(y)\|}{\|x-y\|^{\nu}}=\varepsilon^{\kappa+1} \frac{\|\p^\beta \tilde \alpha(\varepsilon x)-\p^\beta \tilde \alpha(\varepsilon y)\|}{\|\varepsilon x-\varepsilon y\|^{\nu}}.
 $$	
 \end{itemize}
 Therefore for any $\varepsilon\in (0,1]$ we have
\begin{equation}\label{epsilon-small}
 \|\alpha_\varepsilon\|_{{\cal C}^\kappa}=\|\tilde\alpha_\varepsilon\|_{{\cal C}^\kappa}\leq \varepsilon \|\tilde \alpha|_{\bar B_{\varepsilon r}}\|_{{\cal C}^\kappa}=\varepsilon  \|\alpha|_{\bar B_{\varepsilon r}}\|_{{\cal C}^\kappa}\leq \varepsilon \|\alpha|_{\bar B_r}\|_{{\cal C}^\kappa}.
\end{equation}
\begin{itemize} 
\item[-]  Suppose $n=1$. We apply Theorem 	\ref{effective-NeNi-n=1} to the bounded domain $B_r\subset\C$. Formula (\ref{epsilon-small}) shows that, for sufficiently small $\varepsilon>0$ we have $\alpha_\varepsilon\in N_{B_r}$, so the equation $\bar\lg(\exp(u))=\alpha_\varepsilon $ has a solution $u_\varepsilon\in {\cal C}^{\kappa+1}(\bar B_r,\g)$. Therefore $u_\varepsilon\circ h_\varepsilon^{-1}\in {\cal C}^{\kappa+1}(\bar B_{\varepsilon r},\g)$ is a solution of the equation $\bar\lg(\exp(u))=\alpha|_{\bar B_{\varepsilon r}}$. \vspace{2mm}
\item[-]  Suppose $n\geq 2$. We apply Theorem \ref{effective-NeNi} to the strictly pseudo-convex open subset $B_r$ of $X=\C^n$. By formula (\ref{alpha-in-W}) we have $\alpha_\varepsilon\in W$ for any $\varepsilon\in(0,1]$. Moreover, formula (\ref{epsilon-small}) shows that, for sufficiently small $\varepsilon>0$ the form $\alpha_\varepsilon$ belongs to the open neighborhood $N_{B_r}$ of 0 in $W$ given by Theorem \ref{effective-NeNi}, so the equation $\bar\lg(\exp(u))=\alpha_\varepsilon $ has a solution $u_\varepsilon\in {\cal C}^{\kappa+1}(B_r,\g)$. Therefore $u_\varepsilon\circ h_\varepsilon^{-1}\in {\cal C}^{\kappa+1}( B_{\varepsilon r},\g)$ is a solution of the equation $\bar\lg(\exp(u))=\alpha|_{B_{\varepsilon r}}$.

\end{itemize}

For $\kappa=+\infty$ the claim follows from Proposition \ref{NeNi-smooth}: in this case $J$ is an integrable bundle ACS of class ${\cal C}^\infty$ on $P$ and the bundle map $p:P\to U$ becomes a holomorphic submersion.  Local holomorphic sections of $p$ will be of class ${\cal C}^\infty$.
 \end{proof}
 \begin{re}
 Let $\kappa\in (0,+\infty]\setminus\N$. In  the case when $G$ is a complex Lie subgroup of $\GL(r,\C)$, the equation $\bar\lg(\sigma)=\alpha$ can be written as $\sigma^{-1}\bp\sigma=\alpha$. One can then use elliptic regularity and bootstrapping to prove that, for $\alpha\in {\cal C}^\kappa(W,\g)$	any solution in ${\cal C}^1(W,G)$ of the equation $\bar\lg(\sigma)=\alpha$ belongs to ${\cal C}^{\kappa+1}(W,G)$. Therefore, for a bundle ACS $J\in {\cal J}^\kappa_P$, any local $J$-pseudo-holomorphic section of class ${\cal C}^1$ is of class ${\cal C}^{\kappa+1}$.
 \end{re}

We can prove now Corollaries \ref{HolStr}, \ref{kappa-regularity} stated in the introduction:

\begin{proof} (of Corollary \ref{HolStr})

Let $\hg_J$ be the set of $J$-pseudo-holomorphic local sections of $P$ which are of class  ${\cal C}^{\kappa+1}$. It suffices to prove that $\hg_J$ is a holomorphic structure on (the underlying topological bundle of) $P$ in the sense of Definition \ref{bundleCS}. By Theorem \ref{NNGCkappa} $\hg_J$ satisfies condition (1) in this definition. 
In order to prove the second condition   (holomorphic compatibility), let  $\tau:W\to P$, $\tau':W'\to P$ be $J$-pseudo-holomorphic local sections of class ${\cal C}^{\kappa+1}$ of $P$. We have to prove that the  comparison map $\psi_{\tau\tau'}:W\cap W'\to G$ is holomorphic. 
The maps $\Psi: W\times G\to P_W$, $\Psi': W'\times G\to P_{W'}$ defined by
$$
\Psi(x,g)=\tau(x)g, \ \Psi'(x,g)=\tau'(x)g
$$
are  diffeomorphisms of class ${\cal C}^{\kappa +1}$. Moreover, they are $J$-pseudo-holomorphic because  $\tau$, $\tau'$ are $J$-pseudo-holomorphic and the action $P\times G\to P$ is $J$-pseudo-holomorphic. It follows that the ${\cal C}^{\kappa +1}$ diffeomorphism     
$$ \Psi^{-1}\circ\Psi':(W\cap W')\times G\to (W\cap W')\times G$$
is holomorphic. But 
$$
\Psi^{-1}\circ\Psi'(x,g)=(x, \psi_{\tau\tau'}(x)g),
$$
in particular $\Psi^{-1}\circ\Psi'(x,e)=(x, \psi_{\tau\tau'}(x))$, which proves that $\psi_{\tau\tau'}$ is holomorphic.

Finally note that $\hg_J$  is maximal (with respect to inclusion) satisfying (1), (2). Indeed,  a  local continuous section $\sigma$ of $P$ which is  holomorphically compatible with any $\tau\in\hg_J$ is obviously $J$-pseudo-holomorphic and of class ${\cal C}^{\kappa+1}$, so it belongs to $\hg_J$.
%
%
\end{proof}
\begin{proof} (of Corollary \ref{kappa-regularity})

Put $E\edf P\times_G F$, and let $\varphi:V\to E$ be a holomorphic (with respect to $\hg_J$) local section. This means that the corresponding $G$-equivariant map $\hat \varphi:P_V\to F$ is holomorphic with respect to $\hg_J$. We have to prove that $\varphi$ is of class ${\cal C}^{\kappa+1}$, i.e. that the composition  $\hat \varphi\circ \sigma:  W_\sigma \to F$ is of class ${\cal C}^{\kappa+1}$ for any ${\cal C}^\infty$ local section $\sigma:W_\sigma\to P$ with $W_\sigma\subset V$ . Let $x\in  W_\sigma$ and let $\tau:W_\tau\to P$ be a local section belonging to $\hg_J$ with $x\in W_\tau\subset   W_\sigma$. Since we assumed that $\varphi$ is holomorphic, we know that $\hat\varphi\circ \tau:W_\tau\to F$ is holomorphic. For $y\in W_\tau$ we have 
$$(\hat \varphi\circ \sigma)(y)=\hat \varphi(\sigma(y))=\hat \varphi(\tau(y)\psi_{\tau\sigma}(y))=\psi_{\tau\sigma}(y)^{-1}(\hat\varphi\circ \tau)(y),$$
so $\hat \varphi\circ \sigma$ is of class ${\cal C}^{\kappa+1}$ on $W_\tau$ because $\hat\varphi\circ \tau$ is holomorphic and, since $\tau$ is of class ${\cal C}^{\kappa+1}$,  the comparison map $\psi_{\tau\sigma}:W_\tau\to G$ is of class ${\cal C}^{\kappa+1}$.
\end{proof}

 \section{Appendix}
 
 \subsection{Lipschitz spaces, Hölder spaces}
\label{H-spaces} 
Let $\kappa\in (0,+\infty)\setminus\N$, $k\edf[\kappa]$, $\nu\edf\kappa-k$.  For a finite dimensional normed space  $T$   let $\mathrm{Lip}^\kappa(\R^n,T)$ be the order $\kappa$ Lipschitz space of $T$-valued maps on $\R^n$ in supremum norm   \cite[p. 2]{JW},  \cite[p. 176]{St}:
 \begin{equation}\label{defLipkappa}
 \mathrm{Lip}^\kappa(\R^n,T) \edf \{f\in {\cal C}^{[\kappa]}(\R^n,T)|\ \| f\|_{\mathrm{Lip}^\kappa}<\infty\},
 \end{equation}
 where
\begin{equation}
\begin{split}\label{DefLipkappa-norm}
\| f\|_{\mathrm{Lip}^\kappa} \edf    \inf & \big\{m\in\R_+|\ \sup_{\R^n}\|\p^j f\|\leq m, \hb{ for }|j|\leq [\kappa], \hb{ and}\\
 &\|\p^jf(x)-\p^jf(y)\|\leq m\|x-y\|^{\kappa-[\kappa]} \hb{ for } |j|=[\kappa],\  x,\ y\in\R^n\big\}.
\end{split}	
\end{equation}

 Let $\Omega\subset\R^n$ be a bounded domain with smooth boundary.  We refer to  \cite[section 4.1]{GiTr} for the standard definition of the Hölder spaces ${\cal C}^{k,\nu}(\bar\Omega)$ and we note that the definition extends in an obvious way to $T$-valued maps. We will denote by ${\cal C}^{k,\nu}(\bar\Omega,T)$ or ${\cal C}^{\kappa}(\bar\Omega,T)$ the resulting Banach space.  Using the extension Lemma  \cite[Lemma 6.37]{GiTr} we   obtain an equivalent definition of the space ${\cal C}^{\kappa}(\bar\Omega,T)$:
 \begin{equation}
 {\cal C}^{\kappa}(\bar\Omega,T)=\{f\in {\cal C}^0(\bar\Omega,T)|\ \exists \tilde f\in \mathrm{Lip}^\kappa(\R^n,T) \hbox{ such that }\tilde f|_{\bar\Omega}=f\}.
 \end{equation}
 This shows that the restriction epimorphism
 $$
 |_{\bar \Omega}:\mathrm{Lip}^\kappa(\R^n,T)\to {\cal C}^{\kappa}(\bar\Omega,T)
 $$
 induces an isomorphism of Banach spaces
 $$
 \qmod{\mathrm{Lip}^\kappa(\R^n,T)}{\{f\in \mathrm{Lip}^\kappa(\R^n,T)|\ f|_{\bar\Omega}\equiv 0\}}\textmap{\simeq}{\cal C}^{\kappa}(\bar\Omega,T).
 $$
 We also define the Lipschitz space $\mathrm{Lip}^\kappa(\R^n_+,T)$ of $T$-valued maps on the half-space $\R^n_+\edf \{x\in \R^n|\ x_n\geq 0\}$ by
$$
\mathrm{Lip}^\kappa(\R^n_+,T)\edf \{f\in {\cal C}^0(\R^n_+,T)|\ \exists \tilde f\in \mathrm{Lip}^\kappa(\R^n,T) \hbox{ such that }\tilde f|_{\R^n_+}=f\}.
$$
Endowed with the norm $\|\cdot \|_{\mathrm{Lip}^\kappa}$ induced by the obvious isomorphism
$$
\qmod{\mathrm{Lip}^\kappa(\R^n,T)}{\{\psi\in \mathrm{Lip}^\kappa(\R^n,T)|\ \psi|_{\R^n_+}\equiv 0\} }\textmap{\simeq} \mathrm{Lip}^\kappa(\R^n_+,T),
$$
$\mathrm{Lip}^\kappa(\R^n_+,T)$ becomes a Banach space.

Let now $M$ ($\bar U$) be  an $n$-dimensional differentiable manifold (with boundary),  and  ${\cal A}_M$ (${\cal A}_{\bar U})$ be the maximal atlas (the set of charts) of $M$ ($\bar U$).  We define the spaces
\begin{equation}\label{C-kappa-M}
\begin{split}
{\cal C}^\kappa(M,T)\edf \{f\in &\ {\cal C}^0(M,T)|\ (\chi f|_{V_h})\circ h^{-1} \in\mathrm{Lip}^\kappa(\R^n,T) \hb{ for any } \   \\
&(M\stackrel{\rm open}{\supset} V_h\stackrel{h}\to W_h\stackrel{\rm open}{\subset} \R^n) \in{\cal A}_M	 \hb{ and  }\chi\in {\cal C}^\infty_c(V_h,\R)\},
\end{split}	
\end{equation}
\begin{equation}\label{C-kappa-bar-U}
\begin{split}
{\cal C}^\kappa(\bar U,T)\edf \{f \in &\ {\cal C}^0(\bar U,T)|\  (\chi f|_{V_h})\circ h^{-1} \in\mathrm{Lip}^\kappa(\R^n_+,T) \hb{ for any } \   \\
&(\bar U\stackrel{\rm open}{\supset} V_h\stackrel{h}\to W_h\stackrel{\rm open}{\subset} \R^n_+) \in{\cal A}_{\bar U}	 \hb{ and  }\chi\in {\cal C}^\infty_c(V_h,\R)\}.
\end{split}	
\end{equation}

The space  ${\cal C}^\kappa(M,T)$ (${\cal C}^\kappa(\bar U,T)$) is naturally a Fréchet space.  When $M$ ($\bar U$) is compact, the topology of this space can be defined by a single norm, so it becomes a Banach space. More precisely, for a finite atlas ${\cal A}\subset {\cal A}_M$ (${\cal A}\subset {\cal A}_{\bar U}$) of a compact manifold $M$ (with boundary $\bar U$) and a partition of unity $(\chi_h)_{h\in {\cal A}}$ subordinate to the open cover $(V_h)_{h\in {\cal A}}$ of $M$ ($\bar U$), we obtain a norm on  ${\cal C}^\kappa(M,T)$ (${\cal C}^\kappa(\bar U,T)$), defining its topology, given by
\begin{equation}\label{Norm-On-Ckappa}
\| f\|_{{\cal C}^\kappa}=\sum_{h\in {\cal A}} \|(\chi_h f|_{V_h})\circ h^{-1}\|_{\mathrm{Lip}^\kappa},
\end{equation}
where $\|\cdot \|_{\mathrm{Lip}^\kappa}$ stands for the norm defined above on the space  $\mathrm{Lip}^\kappa(\R^n,T)$ (respectively $\mathrm{Lip}^\kappa(\R^n_+,T)$). In particular we obtain a third equivalent definition of the Banach space ${\cal C}^{\kappa}(\bar\Omega,T)$  associated with a bounded domain $\Omega\subset\R^n$ with smooth boundary.
\\

Let   $\Omega$ be such a domain. A   $T$-valued differential form of degree $d$ on $\Omega$ can be regarded as a map $\Omega\to L^d_{\rm alt}(\R^n,T)$ with values in the space $L^d_{\rm alt}(\R^n,T)$ of $T$-valued alternating $d$-linear maps on $(\R^n)^d$. Using the identification
$$
{\cal C}^k(\Omega,\extp^d_{\;\Omega}\otimes T)\textmap{\simeq}{\cal C}^k(\Omega,L^d_{\rm alt}(\R^n,T)), 
$$
we obtain a natural definition of the Hölder space ${\cal C}^\kappa(\bar \Omega,\extp^d_{\;\bar\Omega}\otimes T)$: one just replaces $T$ by $L^d_{\rm alt}(\R^n,T)$ in the definition of ${\cal C}^\kappa(\bar \Omega,T)$. More generally, using formulae similar to (\ref{C-kappa-M}), (\ref{C-kappa-bar-U})  we obtain -- for a differentiable manifold (with boundary) $M$  ($\bar U$) --  the Fréchet spaces ${\cal C}^k(M,\extp^d_{\;M}\otimes T)$ (respectively ${\cal C}^k(\bar U,\extp^d_{\;\bar U}\otimes T)$); these spaces become Banach spaces when $M$ ($\bar U$) is compact. In this case, choosing a pair $({\cal A},(\chi_h)_{h\in {\cal A}})$ consisting of a finite atlas of $M$ ($\bar U$) and a partition of unity subordinate to  the open cover $(V_h)_{h\in {\cal A}}$, we obtain -- using a formula similar to (\ref{Norm-On-Ckappa}) -- defining norms on the spaces ${\cal C}^\kappa(\bar \Omega,\extp^d_{\;\bar\Omega}\otimes T)$, $d\geq 0$.

 \subsection{The Newlander-Nirenberg theorem for vector bundles}\label{DolbeaultSect}
Let $U$ be a  connected $n$-dimensional complex manifold and $E$  a differentiable complex vector bundle of rank $r$ on $U$. Let 
$$\delta: A^0(U,E)\to A^{0,1}(U,E)$$
be a Dolbeault operator (semi-connection)  on $E$, i.e. a first order differential operator  satisfying the Leibniz rule $\delta(f \sigma)=\bp f \sigma+f\delta\sigma$ (see for instance \cite[section 2.2.2]{DK} \cite[section 1]{LO}, \cite[section 4.3]{LT}). We  denote by the same symbol the natural extension $A^{0,q}(U,E)\to A^{0,q+1}(U,E)$ and recall that $\delta^2:A^0(U,E)\to A^{0,2}(U,E)$ is an order 0 operator, so it is given by a form $F_\delta\in A^{0,2}(U,\End(E))$. With respect to a local trivialization,  $\delta$  has the form $\bp+\alpha$ for a  $\gl(r,\C)$-valued form $\alpha$ of type (0,1), and then, in the same trivialization, $F_\delta$ is given by $\bp\alpha+\alpha\wedge\alpha$.
 
By the vector bundle version of the Newlander-Nirenberg theorem, the $\End(E)$-valued (0,2)-form $F_\delta$ is the obstruction to the integrability of $\delta$  \cite[Proposition p. 419]{Gr}, \cite[Theorem 5.1]{AHS}, \cite[Proposition I.3.7]{Ko}, \cite[Theorem 2.1.53]{DK}):

\begin{thr} [The Newlander-Nirenberg theorem for vector bundles]  
 Let $\delta$ be a Dolbeault operator on $E$. The following conditions are equivalent:
 \begin{enumerate}
 	\item $F_\delta=0$.
 	\item $\delta$ is integrable in the following sense: for any point $x\in U$ there exists an open neighborhood $W$ of $x$ and a  frame $(\theta_1,\dots,\theta_r)\in A^0(W,E)^r$ with $\delta \theta_i=0$.
 \end{enumerate}
 If this is the case, $\delta$ defines a holomorphic structure $\hg_\delta$ on $E$. For an open set $W\subset U$, a section $\sigma\in A^0(W,E)$ is holomorphic with respect to $\hg_\delta$ if and only if $\delta\sigma=0$. 
 \end{thr}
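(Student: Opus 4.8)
The plan is to deduce this from the principal bundle version, Proposition \ref{NeNi-smooth}, by passing to the frame bundle of $E$ and invoking the dictionary of section \ref{Dolbeault-J-rho}. Let $P\to U$ be the $\GL(r,\C)$-frame bundle of $E$, so that $E=E_P=P\times_{\rho_{\rm can}}\C^r$ for the canonical representation $\rho_{\rm can}:\GL(r,\C)\to\GL(\C^r)$. By the Remark at the end of section \ref{Dolbeault-J-rho}, the map $D_{\rho_{\rm can}}$ is a bijection between the space ${\cal J}_P$ of bundle ACS on $P$ and the space ${\cal D}_{E_P}$ of Dolbeault operators on $E$; let $J\in{\cal J}_P$ be the unique bundle ACS with $D_{\rho_{\rm can}}(J)=\delta$. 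The Lie algebra morphism $\rg$ of $\rho_{\rm can}$ is the identity of $\gl(r,\C)$, so under the canonical identification $\End(E)\simeq\Ad(P)$ the formula $F_{\bp^{\rho}_J}=\rg(\fg_J)$ gives $F_\delta=\fg_J$. In particular the two vanishing conditions $F_\delta=0$ and $\fg_J=0$ are equivalent.

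By Proposition \ref{NeNi-smooth} the condition $\fg_J=0$ is equivalent to the integrability of $J$, and integrability of $J$ means exactly that every point of $U$ has a neighborhood carrying a $J$-pseudo-holomorphic section of $P$ (the holomorphic local sections of the submersion $p$ once $P$ is made a complex manifold). Now a smooth local section $\tau:W\to P$ of the frame bundle is the same datum as a smooth frame $(\theta_1,\dots,\theta_r)$ of $E$ over $W$, and by Remark \ref{bijection-J-alpha_J}(4) such a $\tau$ is $J$-pseudo-holomorphic precisely when $\alpha_J^\tau=0$; by the local formula $\bp_J^{\rho,\tau}s=\bp s+\rg(\alpha_J^\tau)s$ this says exactly that $\delta$ reduces to $\bp$ in the trivialization determined by $\tau$, i.e. $\delta\theta_i=0$ for all $i$. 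Thus local $J$-pseudo-holomorphic sections of $P$ correspond bijectively to local $\delta$-parallel frames of $E$, and the equivalence (1)$\Leftrightarrow$(2) follows. (The implication (2)$\Rightarrow$(1) is in any case immediate: in a $\delta$-parallel frame the connection form $\alpha$ vanishes, whence $F_\delta=\bp\alpha+\alpha\wedge\alpha=0$ locally, and $F_\delta$ is tensorial.)

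For the final assertion I would argue exactly as in the proof of Corollary \ref{HolStr}. Assuming $F_\delta=0$, let $\hg_\delta$ be the set of $\delta$-parallel frames; it covers $U$ by the equivalence just proved, and any two of its members $\tau$, $\tau'$ are holomorphically compatible because their comparison map $\psi_{\tau\tau'}$ is precisely a $J$-pseudo-holomorphic change of trivialization of the frame bundle, hence holomorphic. This makes $\hg_\delta$ a holomorphic structure on $E$. Finally, a section $\sigma\in A^0(W,E)$ is holomorphic for $\hg_\delta$ if and only if its components in each local $\delta$-parallel frame are holomorphic; since $\delta$ acts on those components as $\bp$, this is equivalent to $\delta\sigma=0$. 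The only substantive input in this whole scheme is the classical Newlander-Nirenberg theorem, which is already absorbed into Proposition \ref{NeNi-smooth}; the point requiring care is therefore purely the translation through the frame bundle, namely the matching $F_\delta=\fg_J$ under $D_{\rho_{\rm can}}$ and the identification of $J$-pseudo-holomorphic frame-bundle sections with $\delta$-parallel frames.
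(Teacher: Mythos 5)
Your proposal is correct, but it takes a different route from the paper: the paper does not prove this theorem at all, presenting it in the appendix as a classical result with citations (Griffiths, Atiyah--Hitchin--Singer, Kobayashi, Donaldson--Kronheimer), whereas you derive it internally from Proposition \ref{NeNi-smooth} by passing to the $\GL(r,\C)$-frame bundle. Your derivation is sound and non-circular within the paper's logic: Proposition \ref{NeNi-smooth} is proved via the general Newlander--Nirenberg theorem (involutivity of $T^{0,1}_{P,J}$) without any appeal to the vector bundle statement, and the two translation steps you single out are exactly the ones that need checking and both hold --- $F_\delta=\fg_J$ follows from formula (\ref{F-{bp-rho-J}}) since $\rg=\id_{\gl(r,\C)}$ for $\rho_{\rm can}$, and $J$-pseudo-holomorphic sections of the frame bundle correspond to $\delta$-parallel frames via $\alpha_J^\tau=0$ and formula (\ref{bp-J{rho,tau}}). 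The one point stated a bit quickly is the implication ``$J$ integrable $\Rightarrow$ local pseudo-holomorphic sections exist,'' which rests on $p:P\to U$ becoming a holomorphic submersion and the holomorphic implicit function theorem; this is legitimate and is precisely the argument the paper itself uses in the case $\kappa=+\infty$ of Theorem \ref{NNGCkappa}. What your approach buys is a uniform picture in which the vector bundle theorem is literally the specialization of the principal bundle theorem to $G=\GL(r,\C)$ (essentially the Koszul--Malgrange argument lifted to the frame bundle); what the paper's choice of citation buys is independence of exposition, since it wants the vector bundle case as motivating background rather than as a corollary, and the classical references also cover direct PDE proofs that do not route through a complex structure on a total space. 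For the final assertions your argument parallels the proof of Corollary \ref{HolStr}; alternatively one can verify holomorphic compatibility directly, since $0=\delta\theta'_j=\sum_i(\bp\psi_{ij})\theta_i$ forces $\bp\psi_{\tau\tau'}=0$.
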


The map $\delta\mapsto \hg_\delta$ defines a bijection between the set of integrable Dolbeault operators and the set of holomorphic structures on $E$. This  result has   important consequences:  the set of isomorphism classes of holomorphic bundles which are differentiably isomorphic to $E$ can be identified with the quotient ${\cal D}^{\rm int}_E/\Aut(E)$  of gauge classes of integrable Dolbeault operators on $E$. Therefore ideas and techniques from gauge theory can be used in the construction of moduli spaces of holomorphic bundles. This idea has been used in    \cite{LO}  to give a gauge theoretical construction of the moduli space of simple holomorphic bundles  with fixed differentiable type. 
\vspace{2mm}

 \subsection{Vector fields on principal bundles}\label{VectorFieldsOnP}
 
 Let $G$ be a Lie group, $P$ a differentiable manifold, and $P\times G\to P$ a smooth right action of $G$ on $P$.  The infinitesimal action of the Lie algebra $\g$ of $G$ on $P$  can  be regarded a $\g^*$-valued vector field $\nu\in \Gamma(P,T_P\otimes \g^*)$. Explicitly $\nu$ is given by
 $$
 \nu_y(a)=a^\#_y \ \forall y\in P,\, \forall a\in\g.
 $$
For any map  $\lambda\in {\cal C}^\infty(P,\g)$ we obtain a vector field $\nu\cdot\lambda$ given by 
$$(\nu\cdot\lambda)_y=\nu_y(\lambda(y))=\lambda(y)^\#_y.$$
In other words $\nu\cdot\lambda$ is the image of the $\g^*\otimes\g$-valued vector field $\nu\otimes\lambda$ under the canonical vector bundle morphism $T_P\otimes(\g^*\otimes\g)\to T_P$. We will use the simpler notation $\lambda^\nu$ for the vector field $\nu\cdot \lambda$. If $\lambda$ is the constant map associated with $a\in\g$, then $\lambda^\nu=a^\#$.
 
 The $\g^*$-valued vector field $\nu$ is obviously invariant under any local diffeomorphisms $P\stackrel{\rm open}{\supset} U\textmap{f}V\stackrel{\rm open}{\subset} P$ which commutes with the infinitesimal $G$-action on $P$, i.e. such that $f_*(a^\#|_U)=a^\#|_V$.
Using this fact we obtain: 
 \begin{re}
 Let $\xi\in {\cal X}(P)$ be vector field whose associated local 1-parameter group of diffeomorphisms $(\varphi_t)_t$ commutes with the infinitesimal $G$-action on $P$, i.e. it satisfies  the property
 \vspace{2mm}\\
{\bf P:} For any $x\in P$ there exists $\varepsilon_x>0$ and an open neighborhood $U_x$  of $x$ in $P$ such that for any $t\in (-\varepsilon_x,\varepsilon_x)$, the local diffeomorphism $\varphi_t$ is defined on $U_x$ and $\varphi_t:U_x\to \varphi_t(U_x)$  commutes with the infinitesimal $G$-action on $P$ in the sense defined above. 
  \vspace{2mm}\\ 
Then $L_\xi(\nu)=0$.
 \end{re}
 \begin{proof}
 Indeed, property {\bf P} implies  $\varphi_{t*}(\nu)_x=\nu_x$ for any $t\in (-\varepsilon_x,\varepsilon_x)$  for which $\varphi_t^{-1}(x)\in U_x$. Differentiating with  respect to $t$ at $t=0$ we obtain $L_\xi(\nu)_x=0$.
 \end{proof}

Using \cite[Corollary 1.8 p. 14]{KN} applied to the vector fields $a^\#$, $a\in\g$ and  \cite[Corollary 1.11 p. 16]{KN}  it follows that $\xi$ has  property {\bf P} if only if $[\xi,a^\#]=0$ for any $a\in \g$, i.e. if only if $\xi$ is invariant under the infinitesimal $G$-action on $P$.

  We obtain:
 \begin{re}\label{xi-nu-lambda-rem}
 Let $\xi\in {\cal X}(P)$ be a  vector field on $P$ which is invariant under the infinitesimal $G$-action on $P$, i.e. such that $[\xi,a^\#]=0$  for any $a\in \g$. Then 	$L_\xi(\nu)=0$, in particular for any map $\lambda\in {\cal C}^\infty(P,\g)$ we have $L_\xi(\lambda^\nu)= L_\xi(\lambda)^\nu $, i.e.
 \begin{equation}\label{xi-nu-lambda}
 [\xi,  \lambda^\nu]= \xi(\lambda)^\nu.
 \end{equation}
 \end{re}
 
 Suppose now that $p:P\to U$ is a principal $G$-bundle. For any $\lambda\in {\cal C}^\infty(P,\g)$ the vector field $ \lambda^\nu$ is vertical. If $\lambda$ is $\Ad$-equivariant, i.e. if $\lambda$ belongs to ${\cal C}^\infty_{\Ad}(P,\g)=A^0(U, \Ad(P) )$,   then the vector field $ \lambda^\nu$ is $G$-invariant, so Remark \ref{xi-nu-lambda-rem} applies and (\ref{xi-nu-lambda}) gives
 \begin{equation}\label{xi-nu-lambda-lambda'}
 [ \lambda^\nu,  \lambda'^\nu]= ((\lambda^\nu)(\lambda'))^\nu.
 \end{equation}
 for any $\lambda'\in {\cal C}^\infty(P,\g)$. If $\lambda'$ is also $\Ad$-equivariant, we have
 \begin{equation}
(\lambda^\nu)(\lambda')=-[\lambda,\lambda'].	
 \end{equation}
 This follows by noting that, via the diffeomorphism $f_y:G\to y_0G$ associated with a point $y_0\in P$, the restriction of $\lambda'$ to the fiber $y_0G$ is given by $g\mapsto \Ad_{g^{-1}}(\lambda'(y_0))$, whereas the restriction of $\lambda^\nu$ to $y_0G$ is the right invariant vector field associated with $\lambda(y_0)$.
 Therefore, we obtain
 \begin{re}\label{nu-cdot-lambda-nu-cdot-lambda'}
 Let $p:P\to U$ be a principal $G$-bundle, and $\lambda$, $\lambda'\in {\cal C}^\infty_{\Ad}(P,\g)=A^0(U,\Ad(P))$. Then 
 $$
 [\lambda^\nu,  \lambda'^\nu]=-[\lambda,\lambda']^\nu.
 $$
 \end{re}

This formula can also be obtained by noting that, for $\lambda\in {\cal C}^\infty_{\Ad}(P,\g)$, $\lambda^\nu$ is the vector field (infinitesimal transformation) associated with $\lambda$ regarded as element in the Lie algebra $\Gamma(U,\Ad(P))$ of the gauge group $\Aut(P)=\Gamma(U,\iota(P))$. Since $\Aut(P)$ acts on $P$ from the left, the linear map ${\cal C}^\infty_{\Ad}(P,\g)=\mathrm{Lie}(\Aut(P))\to {\cal X}(P)$ is an  anti-homomorphism of Lie algebras.


\end{document}